\newtheorem{theorem}{Theorem}[section]
\newtheorem{lemma}{Lemma}[section]
\newtheorem{conjecture}{Conjecture}[section]
\newtheorem{remark}{Remark}[section]
\newtheorem{open problem}{Open problem}[section]
\newtheorem{proposition}{Proposition}[section]
\begin{document}

\title{ On The St$\ddot{o}$rmer Theorem And Its Applications--\\
{\small An extended version of the paper ``On some equations related to Ma's conjecture" in Integers 11(2011)}}
\author{Pingzhi Yuan \\
 School of Mathematics, Huanan Normal University\\
  Guangzhou 510631, P. R. China\\  email: mcsypz@zsu.edu.cn\\
  Jiagui Luo \\
College of Mathematics and Information Science\\
Zhaoqing University \\
Zhaoqing 526061, P. R. China\\ email: Luojg62@yahoo.com.cn\\
Alain Togb\'e\\
Mathematics Department, Purdue University North Central\\
1401 S. U.S. 421 Westville IN 46391, USA\\
email: atogbe@pnc.edu}

\date{\today}
\maketitle

\edef \tmp {\the \catcode`@}
  \catcode`@=11
  \def \@thefnmark {}
    \@footnotetext {The first and the second authors are supported by the Guangdong
    Provincial Natural Science Foundation (No. 8151027501000114) and  NSF of China (No. 12171163).
    The third author is partially supported by Purdue University North Central}

\begin{abstract} In the present paper we introduce old and new results related to
St$\ddot{o}$rmer theorem about Pell equations. Moreover we give four types of applications of these results.
\end{abstract}


\vskip 3mm

\section{\bf Introduction}\label{sec:1}

There are many papers studying positive integer solutions,
minimal positive solutions of the diophantine equations
\begin{equation}\label{eq:1}
 kx^2-ly^2=C,\, C=1,\,2,\,4
\end{equation}
and relations between these solutions. Throughout this paper,
we assume that $k,l$ are coprime positive integers and $kl$
nonsquare; and let $2\nmid kl$ when $C=2$ or $4$. It is well known
that an interesting result is called St$\ddot{o}$rmer theorem
\cite{Di20}. More new results extending St$\ddot{o}$rmer theory had
been obtained over the years. We will attempt to list previous
known results which are related to St$\ddot{o}$rmer theorem, questions, and authors that obtained these results.
 Although in view of the duplication mentioned earlier, inadvertent omission is quite possible. Furthermore,
 we will also give and prove some new results. In Section~\ref{sec:2}, we introduce some notions and lemmas
 which will be used in the rest of the paper. In Section~\ref{sec:3}, old and new results related to
 St$\ddot{o}$rmer theorem will be investigated. Section~\ref{sec:4} is devoted to a discussion on applications
 of St$\ddot{o}$rmer theorem. First, we will discuss the solvability of Diophantine equation~\eqref{eq:1},
  where $kl=D$ is a given positive integer. Furthermore, a theorem of Perron \cite{perr} will be generalized.
  See Subsection \ref{subsec:4.1}. Second Sierpinski's conjecture will be discussed in Subsection \ref{subsec:4.2}.
   We recall the results on the generalization of Sierpinski's conjecture obtained by the first two authors.
   Then in the third subsection, Ma's conjecture will be introduced. Moreover new results related to Ma's conjecture will be proved. Finally, in Subsection \ref{subsec:4.4}, using St$\ddot{o}$rmer theorem and its extensions, we will give all integer solutions of other Diophantine equations particularly related to a result of Ljunggren.


\section{Notions and lemmas}\label{sec:2}

We recall that the minimal positive solution of Diophantine equation \eqref{eq:1} is one
of all positive integer solutions $(x,y)$ of equation~\eqref{eq:1} such that $x\sqrt{k}+y\sqrt{l}$ is the smallest.
 One can easily see that this is equivalent to determine a positive integer solution $(x,y)$ of equation~\eqref{eq:1}
 such that $x$ and $y$ are the smallest. If $k=C=1$ or $l=C=1$, then such a solution is also called the fundamental
 solution of equation \eqref{eq:1}.

 Throughout this paper, we denote by $\mathbb{Z},\, \mathbb{N}$ the set of integers and the set of positive integers respectively.
 If $k=C=1$ or $k=1,C=4$ , $x_1+y_1\sqrt{l}$ is the fundamental solution of equation \eqref{eq:1}, then
 we have the following result.
\begin{lemma}{\rm (\cite{suny})}\label{lem:2.1}  All positive integer solutions of equation~\eqref{eq:1} are given by
$$\frac{x+y\sqrt{l}}{\sqrt{C}} = \left(\frac{x_1+y_1\sqrt{l}}{\sqrt{C}}\right)^n,\; n\in\mathbb{N}.$$
 \end{lemma}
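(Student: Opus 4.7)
The plan is to prove both directions via the standard conjugation-and-descent argument familiar from Pell equation theory. Set $\alpha_1 = (x_1+y_1\sqrt{l})/\sqrt{C}$ and its conjugate $\bar\alpha_1 = (x_1-y_1\sqrt{l})/\sqrt{C}$, so that $\alpha_1\bar\alpha_1 = (x_1^2 - ly_1^2)/C = 1$.

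For the easy direction (every power yields a solution), I would argue by induction on $n$. Writing $\alpha_1^n = (X_n + Y_n\sqrt{l})/\sqrt{C}$ and taking norms gives $X_n^2 - lY_n^2 = C$ immediately from multiplicativity of the norm. The delicate point is integrality of $X_n, Y_n$ when $C=4$: since $2 \nmid l$, the relation $x_1^2 - ly_1^2 = 4$ forces $x_1 \equiv y_1 \pmod{2}$, and a short induction using the recursion $X_{n+1} = (x_1 X_n + l y_1 Y_n)/2$, $Y_{n+1} = (x_1 Y_n + y_1 X_n)/2$ preserves this congruence, so $(X_n, Y_n)$ is a bona fide integer solution.

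For the converse, let $(X,Y)$ be any positive integer solution and set $\beta = (X + Y\sqrt{l})/\sqrt{C}$. Since $(x_1, y_1)$ is a positive solution we have $\alpha_1 > 1$, so there is a unique $n \geq 0$ with $\alpha_1^n \leq \beta < \alpha_1^{n+1}$. Multiplying through by $\bar\alpha_1^n$ (which is legal because $\alpha_1 \bar\alpha_1 = 1$) yields $\gamma := \beta \bar\alpha_1^n = (X' + Y'\sqrt{l})/\sqrt{C}$ satisfying $1 \leq \gamma < \alpha_1$ and $X'^2 - lY'^2 = C$ with $X', Y' \in \mathbb{Z}$ of matching parity (same check as before). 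The conjugate $\bar\gamma = 1/\gamma$ lies in $(0,1]$, so $\gamma + \bar\gamma > 0$ and $\gamma - \bar\gamma \geq 0$, which force $X' \geq \sqrt{C} > 0$ and $Y' \geq 0$.

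The final step, which is also where the main obstacle lies, is to rule out $Y' > 0$: if $Y' \geq 1$ then $(X', Y')$ would be a positive integer solution with $X' + Y'\sqrt{l} < x_1 + y_1\sqrt{l}$, contradicting the minimality that defines the fundamental solution. Hence $Y' = 0$ and then $X'^2 = C$ forces $\gamma = 1$, i.e.\ $\beta = \alpha_1^n$. The main subtlety throughout is bookkeeping for the $C=4$ case, namely showing that the parity condition $X \equiv Y \pmod{2}$ is stable under both multiplication by $\alpha_1$ and by $\bar\alpha_1$, so that the descent stays within the class of admissible solutions; this is exactly where the hypothesis $2 \nmid kl$ is used.
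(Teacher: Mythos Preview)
The paper does not prove Lemma~\ref{lem:2.1}; it is quoted from \cite{suny} as a known result, so there is no in-paper argument to compare against. Your proposal is the standard conjugation-and-descent proof for Pell-type equations and is correct as written: the norm-multiplicativity gives the forward direction, the sandwich $\alpha_1^n \le \beta < \alpha_1^{n+1}$ together with $\gamma\bar\gamma = 1$ yields $X'>0$, $Y'\ge 0$, and minimality of the fundamental solution forces $Y'=0$. Two small points worth making explicit in a polished version: (i) the conclusion $X'^2 = C \Rightarrow \gamma = 1$ uses that $C \in \{1,4\}$ is a perfect square, which is exactly the hypothesis in force before the lemma ($k=C=1$ or $k=1,\,C=4$); (ii) the parity stability for $C=4$ can be argued more cheaply than by tracking the recursion, since any integer pair with $X'^2 - lY'^2 = 4$ and $l$ odd automatically satisfies $X' \equiv Y' \pmod 2$.
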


If $k>1$ or $C=2$, $x_1\sqrt{k}+y_1\sqrt{l}$ is the minimal positive solution of equation~\eqref{eq:1}, then we have the following lemma.
 \begin{lemma}{\rm (\cite{luo}, \cite{suny})}\label{lem:2.2}  All positive integer solutions of equation~\eqref{eq:1} are given by
$$\frac{x\sqrt{k}+y\sqrt{l}}{\sqrt{C}} = \left[\frac{x_1\sqrt{k}+y_1\sqrt{l}}{\sqrt{C}}\right]^n,\; n\in\mathbb{N}, \; 2\nmid n.$$
 \end{lemma}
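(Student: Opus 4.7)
Write $\alpha = (x_1\sqrt k + y_1\sqrt l)/\sqrt C$ and $\bar\alpha = (x_1\sqrt k - y_1\sqrt l)/\sqrt C$. Since $kx_1^2 - ly_1^2 = C$ and $y_1 \geq 1$, one has $\alpha\bar\alpha = 1$ and $\alpha > 1 > \bar\alpha > 0$; analogously every positive integer solution $(x,y)$ yields $\theta := (x\sqrt k + y\sqrt l)/\sqrt C > 1$. Two things must be shown: (i) $\alpha^{2m+1}$ is the image of a positive integer solution for every $m \geq 0$; (ii) conversely, every such image has this form.

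For (i), I would induct on odd $n$ using $\alpha^{n+2} = \alpha^2\cdot\alpha^n$. Writing $\alpha^2 = u + v\sqrt{kl}$ with $u = (kx_1^2+ly_1^2)/C$ and $v = 2x_1y_1/C$, the induction step asserts $x_{n+2} = ux_n + lvy_n$ and $y_{n+2} = uy_n + kvx_n$. For $C = 1, 2$ the coefficients $u, v$ are integers, so integrality propagates; for $C = 4$ with $x_1, y_1$ both odd, $u$ and $v$ are half-integers, and the key observation is that the congruence $x_n \equiv y_n \pmod 2$ forced by $kx_n^2 - ly_n^2 = 4$ (with $k, l$ odd) makes the half-integer contributions cancel, keeping $x_{n+2}, y_{n+2}$ integral. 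Positivity and the norm relation $kx_n^2 - ly_n^2 = C$ are immediate from $\alpha^n\bar\alpha^n = 1$ and $\alpha^n > 1 > \bar\alpha^n > 0$.

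For (ii), I would use minimality-driven descent. Given $\theta$, let $n \geq 1$ be the largest odd integer with $\alpha^n \leq \theta < \alpha^{n+2}$, and set $\phi = \theta\bar\alpha^n$. Using $\alpha^n = (x_n\sqrt k + y_n\sqrt l)/\sqrt C$ from (i), a direct expansion gives $\phi = A + B\sqrt{kl}$ where $A = (kxx_n - lyy_n)/C$, $B = (x_ny - xy_n)/C$ are integers by the same parity arguments, and $A^2 - klB^2 = 1$. The bounds $1 \leq \phi < \alpha^2$ and $\bar\phi = 1/\phi \leq 1$ force $B \geq 0$. Assume for contradiction $B \geq 1$; consider $\psi := \phi\bar\alpha$ if $\phi > \alpha$, and $\psi := \alpha\bar\phi$ if $\phi < \alpha$ (the degenerate borderline $\phi = \alpha$ can happen only when $\alpha \in \mathbb Q(\sqrt{kl})$, excluded generically and otherwise reduced to Lemma~\ref{lem:2.1}). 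Expanding either choice yields $\psi = (X\sqrt k + Y\sqrt l)/\sqrt C$ with integer $X, Y$; the inequalities $1 < \psi < \alpha$ and $0 < \bar\psi < 1$, combined with the sum/difference identities $\psi + \bar\psi = 2X\sqrt k/\sqrt C$ and $\psi - \bar\psi = 2Y\sqrt l/\sqrt C$, force $X, Y > 0$. This produces a positive integer solution with $\theta$-value strictly less than $\alpha$, contradicting minimality. Hence $B = 0$, $\phi = 1$, and $\theta = \alpha^n$.

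The hardest step is the integrality bookkeeping in (i) for $C = 4$: because $u, v$ can be genuinely half-integer, one has to examine everything modulo $2$ (and occasionally modulo $8$) to see that the half-integer pieces cancel in the recurrence, and the same care is needed in (ii) to conclude that $A, B$ are integers. Once that is under control, the descent in (ii) is largely formal, driven entirely by the observation that multiplying or dividing the Pell-like element $\phi \in \mathbb Q(\sqrt{kl})$ by $\alpha$ or $\bar\alpha$ produces an expression of the form $(X\sqrt k + Y\sqrt l)/\sqrt C$, so that any $\phi \neq 1$ immediately violates the minimality of $\alpha$.
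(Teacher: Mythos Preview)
The paper does not prove Lemma~\ref{lem:2.2}; it is quoted from \cite{luo} and \cite{suny} with no argument supplied. So there is no ``paper's proof'' to compare against, and your outline must stand on its own. The overall architecture you propose --- closed-form for odd powers plus minimality-descent --- is exactly the classical route, and for $C=1,2$ everything you write goes through as stated.

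For $C=4$ there is one place where your sketch is looser than it should be. In step~(ii) you assert that $A=(kxx_n-lyy_n)/4$ and $B=(x_ny-xy_n)/4$ are integers ``by the same parity arguments'', but in fact the parity of $(x_n,y_n)$ can flip along the odd-index orbit (e.g.\ for $k=5$, $l=1$ one has $(x_1,y_1)=(1,1)$ odd but $(x_3,y_3)=(2,4)$ even), and in borderline test cases $A,B$ can genuinely be half-integers. What \emph{is} always true is that $2A,2B\in\mathbb{Z}$ with $(2A)^2-kl(2B)^2=4$; i.e.\ $2\phi$ is a solution of $X^2-klY^2=4$. Since $2\le 2\phi<2\alpha^2$ and (by Lemma~\ref{lem:2.4}(6)) the minimal positive solution of $X^2-klY^2=4$ is precisely $\varepsilon_5=\varepsilon_4^2/2=2\alpha^2$, any $2B>0$ would undercut that minimum; hence $B=0$ and $\phi=1$. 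This replaces your $\psi$-construction entirely in the $C=4$ case and avoids the mod~$8$ bookkeeping you flagged as ``the hardest step''. With that repair (or a fully executed parity analysis), your argument is complete.
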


Let $R>0$,  $Q$ be nonzero coprime integers with $R-4Q>0$. Let
$\alpha$ and $\beta$ be the two roots of the trinomial
$x^2-\sqrt{R}x+Q$.
 The Lehmer sequence $\{P_n(R, Q)\}$ and the associated Lehmer sequence $\{Q_n(R, Q)\}$
 with parameters $R$ and $Q$ are defined as follows:\\
$$P_n=P_n(R,Q)=\left\{\begin{array}{cc}(\alpha^n-\beta^n)/(\alpha-\beta),
\quad 2\nmid n,
 \\(\alpha^n-\beta^n)/(\alpha^2-\beta^2),2|n \end{array} \right.$$
and
$$Q_n=Q_n(R,Q)=\left\{\begin{array}{cc}(\alpha^n+\beta^n)/(\alpha+\beta),
\quad 2\nmid n,
 \\\alpha^n+\beta^n,\quad \quad \quad \quad 2|n \end{array}
 \right.$$

For simplicity, in this paper we denote $(\alpha^{dr}-\beta^{dr})/(\alpha^{d}-\beta^{d})$ and
 $(\alpha^{r}-\beta^{r})/(\alpha-\beta)$ by $P_{r,d}$ and $P_r$ respectively. \\

Lehmer sequences and associated Lehmer sequences have many
interesting properties and often raise in the study of exponential
Diophantine equations. It is not difficult to see that $P_n$ and
$Q_n$ are both positive integers for all positive integers $n$. The
details can be seen in \cite{lehm}, \cite{ribe}, \cite{yuan1}, \cite{Yuan:2004-ProcAmer}.

\begin{proposition}\label{prop:Lehmer}
Let $d=\gcd(m,n)$ for some integers $m$ and $n$. We have
\begin{enumerate}
\item \label{item:Lehmer-1} If $P_m\neq 1$, then $P_m|P_n$ if and only if $m|n$.

\item \label{item:Lehmer-2} If $m \geq 1$, then $Q_m|Q_n$ if and only if $n/m$ is an odd integer.

\item \label{item:Lehmer-3} $\gcd(P_m,P_n)=U_d$.

\item \label{item:Lehmer-4} $\gcd(Q_m, Q_n)=Q_d$ if $m/d$ and $n/d$ are odd, and 1 otherwise.

\item \label{item:Lehmer-5} $\gcd(P_m, Q_n)=Q_d$ if $m/d$ is even, and 1 otherwise.

\item \label{item:Lehmer-6} $P_{2m}=2P_m Q_m$.

\item \label{item:Lehmer-7} For any prime $p$, $ord_p(P_{mp}/P_m)=1$ or $0$ which depends on $p|P_ma$ or not.

\end{enumerate}
\end{proposition}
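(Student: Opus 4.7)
The plan is to derive all seven parts from Binet-type formulas for $P_n$ and $Q_n$, combined with algebraic identities expressing $\alpha^{m+n}\pm\beta^{m+n}$ in terms of lower-index symmetric functions of $\alpha,\beta$, plus a Euclidean-algorithm style induction for the gcd statements. (I read the $U_d$ in item \ref{item:Lehmer-3} as $P_d$, in line with standard Lehmer usage.) Item \ref{item:Lehmer-6} is a one-line computation from the definitions: multiplying the Binet expressions for $P_m$ and $Q_m$ collapses to $(\alpha^{2m}-\beta^{2m})/(\alpha^2-\beta^2)=P_{2m}$, in both parities of $m$. For item \ref{item:Lehmer-3}, I would establish the key recursion
$$P_{m+n} \;=\; P_m\, Q_n \;\pm\; Q^n\, P_{m-n}$$
(with signs and scalar factors of $\alpha\pm\beta$ depending on the parities of $m,n$), obtained by expanding $\alpha^{m+n}-\beta^{m+n}=(\alpha^m-\beta^m)(\alpha^n+\beta^n)-(\alpha\beta)^n(\alpha^{m-n}-\beta^{m-n})$ and dividing through. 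Since $\gcd(Q,P_r)=1$ for every $r$, this gives $\gcd(P_{m+n},P_n)=\gcd(P_m,P_n)$, and iterating the Euclidean step yields $\gcd(P_m,P_n)=P_{\gcd(m,n)}$. Item \ref{item:Lehmer-1} is then immediate: if $P_m\neq 1$ and $P_m\mid P_n$, then $P_m=\gcd(P_m,P_n)=P_d$, and strict monotonicity of $\{P_k\}$ past its initial terms (guaranteed by $R-4Q>0$) forces $m=d$; the converse is the polynomial fact that $(x^m-y^m)\mid (x^n-y^n)$.

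Items \ref{item:Lehmer-2}, \ref{item:Lehmer-4}, and \ref{item:Lehmer-5} are handled by the same machinery, now using companion identities for $\alpha^{m+n}+\beta^{m+n}$ and for the cross product $(\alpha^m-\beta^m)(\alpha^n+\beta^n)$. The parity conditions — $n/d$ odd in item \ref{item:Lehmer-2}, both $m/d$ and $n/d$ odd in item \ref{item:Lehmer-4}, and $m/d$ even in item \ref{item:Lehmer-5} — arise because doubling an index sends $Q_k$ to $Q_k^2-2Q^k$, which is not divisible by $Q_k$, so $Q$-divisibility survives only odd-multiple indexings; and $Q_n\mid P_m$ requires the factor $\alpha^n+\beta^n$ to appear inside $\alpha^m-\beta^m$, which occurs exactly when $m/d$ is even. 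I would check the four parity cases of $(m,n)$ separately to pin down when the gcd equals the full $Q_d$ versus $1$.

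Item \ref{item:Lehmer-7} is the main obstacle. It is the Lehmer analogue of a lifting-the-exponent lemma: one must show $\mathrm{ord}_p(P_{mp}/P_m)\leq 1$, with equality governed by whether $p\mid P_m\, a$. The natural approach is to factor
$$\frac{\alpha^{mp}-\beta^{mp}}{\alpha^m-\beta^m} \;=\; \prod_{\zeta^p=1,\,\zeta\neq 1}\bigl(\alpha^m-\zeta\beta^m\bigr)$$
as a product over primitive $p$-th roots of unity, expand modulo $p^2$ using $\alpha^m\equiv\beta^m\pmod{p}$ whenever $p\mid P_m$, and show that at most one of the $p-1$ cyclotomic factors contributes a factor of $p$. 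Handling the case $p=2$ and correctly tracking the Lehmer parity normalization through the $p$-adic computation will be the delicate steps; I expect to resolve them by splitting into subcases $2\mid m$ and $2\nmid m$, and by treating $p=2$ directly via item \ref{item:Lehmer-6}.
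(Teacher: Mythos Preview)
The paper does not prove this proposition at all: it is stated as background, with the sentence ``The details can be seen in \cite{lehm}, \cite{ribe}, \cite{yuan1}, \cite{Yuan:2004-ProcAmer}'' immediately preceding it. So there is no paper proof to compare against, and your outline is essentially the standard argument one finds in those references---Binet identities, the Euclidean descent $\gcd(P_m,P_n)=\gcd(P_{m-n},P_n)$ via the addition formula, and a lifting-the-exponent computation for item~\ref{item:Lehmer-7}. As a plan it is sound.

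Two points you should address, however. First, your computation for item~\ref{item:Lehmer-6} actually yields $P_{2m}=P_mQ_m$, not $P_{2m}=2P_mQ_m$: with the Lehmer normalisation in the paper, for $m$ odd one has $P_mQ_m=\dfrac{\alpha^m-\beta^m}{\alpha-\beta}\cdot\dfrac{\alpha^m+\beta^m}{\alpha+\beta}=\dfrac{\alpha^{2m}-\beta^{2m}}{\alpha^2-\beta^2}=P_{2m}$, and the even case is identical. So either the stated factor of $2$ is a typo in the paper, or some other convention is in play; you should flag this rather than claim your one-line computation proves the formula as written. Second, the symbol ``$a$'' in item~\ref{item:Lehmer-7} is nowhere defined in the Lehmer-sequence setup of Section~\ref{sec:2}; before carrying out your cyclotomic-factor argument you need to decide what the intended condition is (in the standard literature the dichotomy is governed by whether $p\mid P_m$, with a separate check when $p\mid Q$ or $p=2$), and state clearly which version you are proving.
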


\begin{lemma}{\rm (\cite{luoy})}\label{lem:2.3} Assume that $R$ and $Q$ are all odd. If $Q_n=ku^2,\, k|n,$ then $n=1, 3, 5.$ If $Q_n=2ku^2,\, k|n,$ then $n=3.$
 \end{lemma}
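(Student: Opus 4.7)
The plan is to bound $n$ by exploiting the divisibility and prime-factorization structure of the associated Lehmer sequence under the hypothesis that the squarefree part of $Q_n$ divides $n$. I would split the argument along the two cases ($Q_n = ku^2$ vs.\ $Q_n = 2ku^2$) and, within each, separate the parity of $n$.

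First, I would carry out a parity analysis modulo $2$. Starting from $Q_0 = 2$, $Q_1 = 1$ and the recurrence $Q_{n+1} = R\,Q_n - Q\,Q_{n-1}$ for odd $n$ together with $Q_{n+1} = Q_n - Q\,Q_{n-1}$ for even $n$ (deduced from the defining formulas by separating the even and odd cases), a short induction with $R \equiv Q \equiv 1 \pmod 2$ shows that $Q_n$ is even if and only if $3 \mid n$. This immediately handles the second statement: $Q_n = 2ku^2$ forces $3 \mid n$, which combined with $k \mid n$ reduces the problem to eliminating $n \in \{9, 15, 21, \ldots\}$.

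Next I would use Proposition~\ref{prop:Lehmer}(\ref{item:Lehmer-2}) and (\ref{item:Lehmer-4}): for each divisor $d$ of $n$ with $n/d$ odd, $Q_d \mid Q_n$, and the quotients by successive $Q_d$'s are essentially pairwise coprime. This gives a near-coprime factorization of $Q_n$ indexed by such divisors $d$. The hypothesis $Q_n = ku^2$ with $k \mid n$ then translates to: every prime appearing to odd multiplicity in $Q_n$ must divide $n$.

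The main obstacle — and the heart of the argument — is then a primitive prime divisor theorem for Lehmer sequences in the style of Carmichael and Bilu--Hanrot--Voutier. For $n$ larger than an explicit bound, $Q_n$ admits a primitive prime divisor $p$ that divides no $Q_m$ with $m < n$, and by item~\ref{item:Lehmer-7} of the Proposition such a $p$ appears to exponent exactly one in $Q_n$. Any primitive divisor satisfies $p \equiv \pm 1 \pmod{2n}$, whence $p > n$; combined with $p \mid k \mid n$ this yields the sought contradiction. The proof would be completed by directly inspecting the finite BHV exceptional list and the small values of $n$ (using explicit formulas such as $Q_3 = R - 3Q$ and the analogous quartic expression for $Q_5$), which gives the exact sets $\{1,3,5\}$ and $\{3\}$. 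Handling those exceptional Lehmer pairs and verifying that no further small $n$ sneak through is where I expect the bulk of the technical work to lie.
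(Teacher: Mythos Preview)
The paper does not supply a proof of Lemma~\ref{lem:2.3}; it is simply quoted from \cite{luoy}, so there is no in-text argument to compare your proposal against. For what it is worth, your outline---parity analysis modulo~$2$, the divisibility structure of $Q_n$ from Proposition~\ref{prop:Lehmer}, and then an appeal to the primitive-divisor theorem of Bilu--Hanrot--Voutier to produce a prime factor exceeding $n$---is the standard modern route to square-class results of this type and is consistent with the methods of the cited source.

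One point to tighten: your invocation of Proposition~\ref{prop:Lehmer}(\ref{item:Lehmer-7}) to force a primitive prime $p$ of $Q_n$ to occur with exponent exactly one is not justified as stated. That item concerns $P_{mp}/P_m$, not $Q_n$, and primitive primes of Lehmer terms can in principle occur to higher multiplicity (the Wall--Sun--Sun phenomenon); if $p^2\mid Q_n$ then $p$ would be absorbed into $u^2$ and your contradiction evaporates. The cleaner argument avoids this altogether: write $n=mq$ with $q$ an odd prime, use $\gcd(Q_m,\,Q_n/Q_m)\mid q$ (from parts (\ref{item:Lehmer-2}) and (\ref{item:Lehmer-4}) of the Proposition) to split $Q_n=ku^2$ into two near-coprime factors each of the same shape, and descend on $m$. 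Combined with BHV on the cofactor $Q_n/Q_m$ and the finite check of small indices, this yields the lists $\{1,3,5\}$ and $\{3\}$ without needing any exponent-one claim.
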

Let $k>1$ and assume that the Diophantine equation \eqref{eq:1} has solution in positive integers.
 Let $x_1\sqrt{k}+y_1\sqrt{l}$ be the minimal positive solution of equation \eqref{eq:1} and define
$$\alpha=\frac{x_1\sqrt{k}+y_1\sqrt{l}}{\sqrt{C}},\quad \beta=\frac{x_1\sqrt{k}-y_1\sqrt{l}}{\sqrt{C}}.$$
Then $R=(\alpha+\beta)^2,\, 4kx_1^2|C,\, Q=\alpha\beta=1$ and $R-4Q>0$. Furthermore, for $n$ odd, we define
$$\alpha^n=\frac{x_n\sqrt{k}+y_n\sqrt{l}}{\sqrt{C}} = \left(\frac{x_1\sqrt{k}+y_1\sqrt{l}}{\sqrt{C}}\right)^n.$$
By Lemma~\ref{lem:2.2}, we know that all positive integer solutions $(X,Y)$ of equation \eqref{eq:1}
are of the form $(x_n,y_n)$. However, for $n$ even, we cannot define $\alpha^n$ as above.
In fact, if $(x,y)$ is a solution of Diophantine equation
\begin{equation}\label{eq:2}
 kx^2-ly^2=1,\,k>1,
\end{equation}
then it is not difficult to see that
$(x\sqrt{k}+y\sqrt{l})^2=2kx^2-1+2xy\sqrt{kl}.$ Hence $(2kx^2-1,2xy)$ is a solution of the Diophantine equation
\begin{equation}\label{eq:3}
 x^2-kly^2=1.
\end{equation}
Similarly, if $x\sqrt{k}+y\sqrt{l}$ and  $u\sqrt{k}+v\sqrt{l}$ are
solutions of the Diophantine equations
\begin{equation}\label{eq:4}
 kx^2-ly^2=2,
\end{equation}
 and
 \begin{equation}\label{eq:5}
 kx^2-ly^2=4,\,k>1,
 \end{equation}
 respectively. Then
 $\left(\frac{x\sqrt{k}+y\sqrt{l}}{\sqrt{2}}\right)^2 = kx^2-1+xy\sqrt{kl}$ and
 $\left(\frac{u\sqrt{k}+v\sqrt{l}}{2}\right)^2 = \frac{lv^2+1}{2}u\sqrt{k}+\frac{kx^2-1}{2}v\sqrt{l}$
 are solutions of equations \eqref{eq:3} and \eqref{eq:2} respectively. But
 $2\left(\frac{u\sqrt{k}+v\sqrt{l}}{2}\right)^2 = ku^2-2+uv\sqrt{kl}$ is a solution of the Diophantine equation
 \begin{equation}\label{eq:6}
 x^2-kly^2=4.
 \end{equation}
A solution $X+Y\sqrt{kl}$ of equation \eqref{eq:6} implies that
$\left(\frac{X+Y\sqrt{kl}}{2}\right)^3$ is a solution of equation \eqref{eq:3}. Note that if $(x,y)$ is a solution of equation \eqref{eq:2}, then $(2x,2y)$ is a solution of equation \eqref{eq:5}. So we assume that equation \eqref{eq:5} has solutions in odd positive integers.

One can ask whether or not there exist relations between the minimal positive solutions of above equations. The answer is positive
and the relations are given by the following result.
 \begin{lemma}{\rm (\cite{yluo})}\label{lem:2.4} Let
 $\varepsilon_1, \varepsilon_2, \varepsilon_3, \varepsilon_4, \varepsilon_5$ be the minimal positive solutions of equation \eqref{eq:2}-\eqref{eq:6} respectively. Then
 \begin{enumerate}
 \item $\varepsilon_2=\varepsilon_1^2$;

 \item $\varepsilon_2=\varepsilon_3^2/2$;

 \item $\varepsilon_2=(\varepsilon_4/2)^6$;

 \item $\varepsilon_2=(\varepsilon_5/2)^3$;

 \item $\varepsilon_1=(\varepsilon_4/2)^3$;

 \item $\varepsilon_5=\varepsilon_4^2/2$.
 \end{enumerate}

\end{lemma}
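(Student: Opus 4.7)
I plan to prove each of the six identities in two stages: first, verify by direct algebraic expansion in $\mathbb{Z}[\sqrt{k},\sqrt{l}]$ that the right-hand side is a positive integer solution of the indicated equation; second, establish minimality by a descent argument using Lemmas~\ref{lem:2.1} and \ref{lem:2.2}. Only three of the six identities are genuinely independent, so I will focus on (1), (2), and (6), and the remaining three will follow as algebraic consequences. Indeed, $(\varepsilon_4/2)^6 = (\varepsilon_4^2/2)^3/2^3 = (\varepsilon_5/2)^3$ once (6) is known, giving the equivalence of (3) and (4), while $\varepsilon_1 = (\varepsilon_4/2)^3$ follows from $\varepsilon_1^2 = \varepsilon_2 = (\varepsilon_4/2)^6$ by taking positive square roots in $\mathbb{Q}(\sqrt{k},\sqrt{l})$.

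For the first stage, the key computations are already sketched in the paragraphs immediately preceding the lemma. For (1) I will expand $\varepsilon_1^2 = (x_1\sqrt{k}+y_1\sqrt{l})^2 = (2kx_1^2-1) + 2x_1y_1\sqrt{kl}$ and check $(2kx_1^2-1)^2 - kl(2x_1y_1)^2 = (kx_1^2-ly_1^2)^2 = 1$. For (2), writing $\varepsilon_3 = u\sqrt{k}+v\sqrt{l}$ with $ku^2-lv^2=2$, expansion yields $\varepsilon_3^2/2 = (ku^2-1)+uv\sqrt{kl}$, a solution of (3). For (6), writing $\varepsilon_4 = u\sqrt{k}+v\sqrt{l}$ with $ku^2-lv^2=4$ and $u,v$ odd, expansion yields $\varepsilon_4^2/2 = (ku^2-2)+uv\sqrt{kl}$, which satisfies $X^2-klY^2=4$.

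For the second stage, I will argue minimality by descent, taking (1) as the template. Assuming the minimal positive solution $\tilde\varepsilon_2$ of (3) satisfies $\tilde\varepsilon_2 \le \varepsilon_1^2$, Lemma~\ref{lem:2.1} gives $\varepsilon_1^2 = \tilde\varepsilon_2^n$ for some $n\ge 1$, and I need to rule out $n\ge 2$. If $n = 2m$ is even, then $\varepsilon_1 = \pm\tilde\varepsilon_2^m \in \mathbb{Q}(\sqrt{kl})$, contradicting the fact that $\varepsilon_1 = x_1\sqrt{k}+y_1\sqrt{l} \notin \mathbb{Q}(\sqrt{kl})$ (since $\sqrt{k}$ and $\sqrt{l}$ are linearly independent over $\mathbb{Q}(\sqrt{kl})$ under the standing hypothesis that $k,l$ are coprime with $kl$ non-square). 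If $n\ge 3$ is odd, I will set $\delta = \varepsilon_1 \tilde\varepsilon_2^{-(n-1)/2}$; expanding in $\mathbb{Z}[\sqrt{k},\sqrt{l}]$ shows $\delta = u\sqrt{k}+v\sqrt{l}$ with integer $u,v$, and $\delta^2 = \tilde\varepsilon_2$ then forces $ku^2-lv^2 = \pm 1$. After sign adjustments (multiplying by $\varepsilon_1$ if the norm is $-1$) this yields a positive-integer solution of (2) strictly smaller than $\varepsilon_1$, contradicting its minimality. The arguments for (2) and (6) will run along exactly the same lines, invoking Lemma~\ref{lem:2.1} or Lemma~\ref{lem:2.2} as appropriate for the target equation.

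The main obstacle will be the bookkeeping in the odd-$n$ descent: I must verify that $\delta$ actually lies in $\mathbb{Z}[\sqrt{k},\sqrt{l}]$ (which does follow from the explicit product formula, since $\varepsilon_1 \in \mathbb{Z}[\sqrt{k},\sqrt{l}]$ and $\tilde\varepsilon_2 \in \mathbb{Z}[\sqrt{kl}]$) and then manage signs so that $(u,v)$ is a genuine positive solution with $ku^2-lv^2 = +1$. A secondary subtlety is that equations (4) and (5) carry the standing hypothesis $2\nmid kl$, and identity (6) additionally requires solutions of (5) in odd integers, so Lemma~\ref{lem:2.2} must be applied with its full hypothesis in each descent.
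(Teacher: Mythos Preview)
The paper does not actually supply a proof of this lemma; it is merely quoted from \cite{yluo}. So there is no ``paper proof'' to compare against, and I evaluate your proposal on its own merits.

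Your two-stage method (algebraic expansion to exhibit a solution, then a descent using Lemmas~\ref{lem:2.1}--\ref{lem:2.2} to establish minimality) is sound, and your analysis of the odd-$n$ descent for (1) will go through. In fact the sign worry you flag is not an issue: with $\tau$ the automorphism $\sqrt{l}\mapsto-\sqrt{l}$ one has $\tau\varepsilon_1=\varepsilon_1^{-1}$ and $\tau\tilde\varepsilon_2=\tilde\varepsilon_2^{-1}$, so $\delta\cdot\tau\delta=1$ automatically, i.e.\ $ku^2-lv^2=+1$; and from $\delta>1>\tau\delta>0$ one reads off $u,v>0$ directly. The step ``multiplying by $\varepsilon_1$ if the norm is $-1$'' is therefore unnecessary.

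There is, however, a genuine gap in your reduction. Identities (1) and (2) relate only $\varepsilon_1,\varepsilon_2,\varepsilon_3$, while identity (6) relates only $\varepsilon_4,\varepsilon_5$. Nothing among (1), (2), (6) links the two groups $\{\varepsilon_1,\varepsilon_2,\varepsilon_3\}$ and $\{\varepsilon_4,\varepsilon_5\}$, so (3), (4), (5) cannot be deduced from them alone. Your observation that (6) gives (3)$\Leftrightarrow$(4), and that (5) follows from (1) together with (3), is correct---but you still need an independent proof of one of (3), (4), or (5). The natural choice is (4): the paragraph before the lemma already shows that $2(\varepsilon_4/2)^2=\varepsilon_4^2/2$ solves \eqref{eq:6} and hence $(\varepsilon_4/2)^6=(\varepsilon_5/2)^3$ solves \eqref{eq:3}; the minimality argument then runs exactly as in your template for (1), using Lemma~\ref{lem:2.1} for the target equation \eqref{eq:3}. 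Once you add this, the remaining derivations you sketched complete the proof.
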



\section{St$\ddot{o}$rmer theorem and its extensions}\label{sec:3}

St$\ddot{o}$rmer obtained an important property on Pell
equations, called St$\ddot{o}$rmer theorem and stated as follows.
 \begin{theorem}{\rm (St$\ddot{o}$rmer theorem \cite{Di20})} \label{Thm:3.1} Let $D$ be a positive nonsquare integer. Let $(x_1, \, y_1)$ be a positive integer solution of Pell equation
 \begin{equation}\label{eq:7}
 x^2-Dy^2=\pm 1.
 \end{equation}
 If every prime divisor of $y_1$ divides $D$, then $x_1+y_1\sqrt{D}$ is the fundamental solution.
 \end{theorem}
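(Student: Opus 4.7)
The plan is to argue by contradiction. Suppose $(x_1,y_1)$ is \emph{not} the fundamental solution of the equation $x^2-Dy^2=\epsilon$ that it satisfies (where $\epsilon = x_1^2-Dy_1^2\in\{+1,-1\}$), and let $(u,v)$ denote this fundamental solution. By the classical classification of Pell solutions one can write $x_1+y_1\sqrt D = (u+v\sqrt D)^n$ for some integer $n\ge 2$. Setting $\alpha=u+v\sqrt D$, $\beta=u-v\sqrt D$ and $U_n=(\alpha^n-\beta^n)/(\alpha-\beta)$, the binomial theorem gives
\[
y_1 \;=\; v\,U_n \;=\; v\sum_{k=0}^{\lfloor(n-1)/2\rfloor}\binom{n}{2k+1}u^{n-1-2k}v^{2k}D^k.
\]
In the Lehmer notation of Section~\ref{sec:2} (with $R=4u^2$, $Q=\epsilon$) one has $U_n=P_n$ for odd $n$ and $U_n=2uP_n$ for even $n$, and the identity $u^2-Dv^2=\epsilon$ yields $\gcd(u,v)=\gcd(u,D)=1$, a fact that will be used repeatedly.

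I would first reduce to the case where $n=p$ is prime. For any prime $p\mid n$, Proposition~\ref{prop:Lehmer}(\ref{item:Lehmer-1}) gives $P_p\mid P_n$, so that $y_p\mid y_1$ and the hypothesis on $y_1$ descends to $y_p$. The case $p=2$ forces $\epsilon=+1$ with $(u,v)$ fundamental of $x^2-Dy^2=+1$, whence $u^2=Dv^2+1\ge 2$ gives $u\ge 2$; then $y_1=2uv$ has a prime divisor coming from $u$ and coprime to $D$ (since $\gcd(u,D)=1$), contradicting the hypothesis. If $p$ is an odd prime, reducing the expansion of $U_p$ modulo $D$ gives $U_p\equiv pu^{p-1}\pmod D$ while strict positivity of every binomial term yields $U_p>pu^{p-1}\ge p$. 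Using $\gcd(u,D)=1$ one gets $\gcd(U_p,D)=\gcd(p,D)$: if $p\nmid D$ then $U_p$ is coprime to $D$ and $U_p>1$, so $U_p$ contributes a prime of $y_1$ outside $D$, a contradiction. If $p\mid D$, write $D=p^aD'$ with $\gcd(p,D')=1$; reducing modulo $D'$ gives $\gcd(U_p,D')=1$, so every prime divisor of $U_p$ must equal $p$, making $U_p$ a pure power of $p$. The $p$-adic valuation bound in Proposition~\ref{prop:Lehmer}(\ref{item:Lehmer-7}) then restricts $\mathrm{ord}_p(U_p)=\mathrm{ord}_p(P_p)$ so severely that, combined with $U_p>p$, no remaining possibilities survive.

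I expect the main obstacle to lie in the ramified sub-case $p\mid D$ of the odd-prime analysis. Once $U_p$ has been shown to be a pure power of $p$, the final contradiction rests on the $p$-adic bound of Proposition~\ref{prop:Lehmer}(\ref{item:Lehmer-7}) together with the strict size estimate $U_p>p$, which comes from the positivity of the trailing term $v^{p-1}D^{(p-1)/2}$ of the binomial expansion; orchestrating these two ingredients cleanly is where the bulk of the work lies. A minor side-issue is the sign interaction in the reduction to $n=2$, but it is resolved by the observation that in the reduced setting $(u,v)$ must already be fundamental of $x^2-Dy^2=+1$ and hence $u\ge 2$.
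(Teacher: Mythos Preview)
The paper does not supply its own proof of Theorem~\ref{Thm:3.1}; it is quoted as the classical St\"ormer theorem with a reference to Dickson~\cite{Di20} and is thereafter used as a black box throughout Section~\ref{sec:3}. So there is no in-paper argument to compare your proposal against.

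That said, your outline is sound and in fact mirrors the technique the paper employs when proving the \emph{extensions} of St\"ormer's theorem (Theorems~\ref{thm:3.7}--\ref{thm:3.12}): pass to a prime exponent via the divisibility $U_p\mid U_n$, expand $U_p$ by the binomial formula, read off $U_p\equiv pu^{p-1}\pmod{D}$, and combine a $p$-adic valuation bound with the strict size inequality $U_p>p$ to reach a contradiction. Your identification of the ramified sub-case $p\mid D$ as the crux is accurate; in the paper's analogous arguments (see e.g.\ the proof of Theorem~\ref{thm:3.7}) the case $q>3$ is dispatched exactly as you describe, while $q=3$ is handled separately by an ad~hoc argument. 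One caution: Proposition~\ref{prop:Lehmer}(\ref{item:Lehmer-7}) as printed is garbled (the symbol ``$a$'' is undefined), and for $p=3$ the blanket bound $\mathrm{ord}_3(U_3)\le 1$ can genuinely fail in Lucas sequences (e.g.\ $D=6$, $u=5$, $v=2$ gives $U_3=99$). The clean way to close the $p=3$ case is not to invoke that proposition but to write $U_3=3u^2+Dv^2=3^s$ and play it directly against $u^2-Dv^2=\pm1$, exactly the kind of endgame you flag as ``where the bulk of the work lies.''
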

Considering Diophantine equation \eqref{eq:2}, D.T. Walker
\cite{wal67} obtained a result similar to St$\ddot{o}$rmer theorem.
See also Q. Sun and P. Yuan \cite{suny}
  \begin{theorem}{\rm (\cite{wal67,suny})} \label{Thm:3.2} Let $(x,y)$
  be a positive integer solution of Diophantine equation \eqref{eq:2}.\\
(i) If every prime divisor of $x$  divides $k$, then
$$x\sqrt{k}+y\sqrt{l}=\varepsilon$$
or
$$x\sqrt{k}+y\sqrt{l}=\varepsilon^3,$$
and $x=3^sx_1,\,\,3\nmid x_1,\,\,3^s+3=4kx_1^2,$ where
$\varepsilon=x_1\sqrt{k}+y_1\sqrt{l}$ is the minimal positive
solution of equation \eqref{eq:2}, $s\in \mathbb{N}$.\\
 (ii) If every prime divisor of $y$ divides $l$, then
$$x\sqrt{k}+y\sqrt{l}=\varepsilon$$
or
$$x\sqrt{k}+y\sqrt{l}=\varepsilon^3,$$
and $y=3^sy_1,\,\,3\nmid y_1,\,\,3^s-3=4ly_1^2,$  $s\in \mathbb{N}$.
 \end{theorem}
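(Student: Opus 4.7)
The plan is to translate the hypothesis into the language of Lehmer numbers and squeeze out a single divisibility constraint that can be handled case by case in $n$.

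By Lemma~\ref{lem:2.2}, any positive integer solution of \eqref{eq:2} has the form $x\sqrt k+y\sqrt l=\varepsilon^n$ with $n$ odd, where $\varepsilon=x_1\sqrt k+y_1\sqrt l$. I would set $\alpha=\varepsilon$ and $\beta=x_1\sqrt k-y_1\sqrt l$, so that $(\alpha,\beta)$ is a Lehmer pair with $R=(\alpha+\beta)^2=4kx_1^2$ and $Q=\alpha\beta=1$. Matching the $\sqrt k$--coefficients in the binomial expansion of $\alpha^n$ gives $x=x_1Q_n$, where $Q_n$ is the associated Lehmer number. The decisive observation is the congruence
$$Q_n\equiv (-1)^{(n-1)/2}n\pmod R\qquad(n\text{ odd}),$$
obtained by reducing the standard expansion of $Q_n$ in powers of $R$ and keeping only the constant term. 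Hence $\gcd(Q_n,R)\mid n$. Combined with the hypothesis that every prime of $x=x_1Q_n$ divides $k$ (and thus $R$), this forces every prime of $Q_n$ both to divide $R$ \emph{and} to divide $n$; in short, $\operatorname{rad}(Q_n)\mid n$.

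The cases $n=1$ and $n=3$ fall out immediately. For $n=1$, $Q_1=1$ and $x=x_1$, giving the alternative $\varepsilon$. For $n=3$, $Q_3=R-3=4kx_1^2-3$; since $\operatorname{rad}(Q_3)\mid 3$, $Q_3=3^s$ for some $s\geq 0$, and a short $3$--adic check on $4kx_1^2=3+3^s$ yields $v_3(kx_1^2)=1$, hence $3\|k$, $3\nmid x_1$, and $x=3^sx_1$ as stated.

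The hard part is excluding $n\geq 5$. Let $p$ denote the smallest prime divisor of $n$. Since $n/p$ is odd, Proposition~\ref{prop:Lehmer}\eqref{item:Lehmer-2} gives $Q_p\mid Q_n$; hence primes of $Q_p$ lie in $\{\text{primes of }n\}$, and at the same time $Q_p\equiv\pm p\pmod R$ forces every such prime to divide $p$. Therefore $Q_p=p^s$ for some $s\geq 0$. For $p\geq 5$ I would pair the growth estimate $Q_p>\alpha^{p-1}/2\geq(\sqrt 2+1)^{p-1}/2$ (valid because $R=4kx_1^2\geq 8$ forces $\alpha\geq\sqrt 2+1$) with the divisibility $R\mid p(p^{s-1}\mp 1)$, and split on $v_p(R)$. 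The case $v_p(R)\geq 2$ collapses cleanly, since $p^{s-1}\mp 1\equiv\mp 1\pmod p$ for $s\geq 2$ forces $v_p(R)=1$, contradicting the assumption; the residual $s\leq 1$ contradicts the growth estimate for $p\geq 5$. The remaining cases $v_p(R)\in\{0,1\}$ yield $R\leq 2p^s$, reducing to the polynomial equation $Q_p(R)=p^s$, which I would rule out via a direct discriminant/valuation argument (for instance $R^2-5R+5=5^s$ has discriminant $5+4\cdot 5^s$, never a perfect square for $s\geq 2$); this residual polynomial analysis is the actual technical hurdle, and for $p\geq 31$ it can be replaced by the existence of a primitive prime divisor of $Q_p$ (Bilu--Hanrot--Voutier). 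If $p=3$, then either $n$ has a larger prime factor (reducing to the previous case) or $n=3^a$ with $a\geq 2$; the identity $Q_{3m}/Q_m=RQ_m^2-3$ applied at $m=3^{a-1}$ reduces us to $n=9$, where $Q_9=3^{s+1}(R\cdot 3^{2s-1}-1)$ together with $Q_9=3^t$ forces $R\cdot 3^{2s-1}=2$, an immediate contradiction.

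Part (ii) is strictly parallel: write $y=y_1P_n$, derive $P_n\equiv n(kx_1^2)^{(n-1)/2}\pmod l$, use $\gcd(k,l)=\gcd(x_1,l)=1$ to get $\operatorname{rad}(P_n)\mid n$, and repeat the case analysis verbatim to reach the symmetric conclusion.
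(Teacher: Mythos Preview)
The paper does not actually prove Theorem~\ref{Thm:3.2}; it is quoted from Walker and Sun--Yuan with no argument given, so there is no in-paper proof to compare against. That said, the proofs of Theorems~\ref{thm:3.7}--\ref{thm:3.12} follow exactly the template you use: write the solution as $\varepsilon^{\,n}$, pass to the associated Lehmer numbers, and exploit the congruence of $Q_n$ (or $P_n$) modulo $R$ together with $\gcd$-relations to pin down $n$. Your reduction to $Q_p=p^{s}$ for a prime $p\mid n$ is correct and standard.

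The genuine gap is precisely where you flag it: for $p\ge 5$ you only carry out the ``residual polynomial analysis'' at $p=5$, leave $7\le p\le 29$ to an unspecified case-by-case discriminant check, and invoke Bilu--Hanrot--Voutier only from $p\ge 31$. As written, this step is incomplete. Fortunately it can be closed uniformly, without casework or BHV. From the recursion $Q_{m+2}=(R-2)Q_m-Q_{m-2}$ one computes that the coefficient of $R$ in $Q_p(R)$ is $(-1)^{(p-3)/2}p(p^{2}-1)/24$, which is divisible by $p$ for every prime $p\ge 5$. You have already shown $p\mid k$ (from $Q_p=p^{s}$ with $s\ge 1$ and the hypothesis), hence $p\mid R$; therefore
\[
g_p(R):=\frac{Q_p-(-1)^{(p-1)/2}p}{R}\ \equiv\ g_p(0)\ \equiv\ 0\pmod p,
\]
so $v_p\!\bigl(Q_p-(-1)^{(p-1)/2}p\bigr)=v_p(R)+v_p(g_p(R))\ge 2$. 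But if $s\ge 2$ the left-hand side equals $v_p\!\bigl(p(p^{\,s-1}\mp 1)\bigr)=1$, a contradiction; and $s\le 1$ is already excluded by your growth bound $Q_p>(\sqrt{2}+1)^{\,p-1}/2>p$. This handles all $p\ge 5$ at once. With this fix, your treatment of the $3$-power tower and your parallel argument for part~(ii) go through.
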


On Diophantine equations \eqref{eq:4} and \eqref{eq:5}, using the
method in \cite{suny}, the second author proved the following
results.
  \begin{theorem}{\rm (\cite{luo})} \label{Thm:3.3} Let $(x,y)$
  be a positive integer solution of Diophantine equation \eqref{eq:4}.\\
(i) If every prime divisor of $x$  divides $k$, then
$$x\sqrt{k}+y\sqrt{l}=\varepsilon$$
or
$$\frac{x\sqrt{k}+y\sqrt{l}}{\sqrt{2}}= \left(\frac{\varepsilon}{\sqrt{2}}\right)^3,$$
and $x=3^sx_1,\,\,3\nmid x_1,\,\,3^s+3=2kx_1^2,$ where
$\varepsilon=x_1\sqrt{k}+y_1\sqrt{l}$ is the minimal positive
solution of equation \eqref{eq:4}, $s\in \mathbb{N}$.\\
 (ii) If every prime divisor of $y$ divides $l$, then
$$x\sqrt{k}+y\sqrt{l}=\varepsilon$$
or
$$\frac{x\sqrt{k}+y\sqrt{l}}{\sqrt{2}}= \left(\frac{\varepsilon}{\sqrt{2}}\right)^3,$$
and $y=3^sy_1,\;3\nmid y_1,\;3^s-3=2ly_1^2,$  $s\in \mathbb{N}$.
 \end{theorem}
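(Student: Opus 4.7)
The plan is to translate equation \eqref{eq:4} into the Lehmer-sequence framework of Section~\ref{sec:2} and mimic the method of \cite{suny} used to prove Theorem~\ref{Thm:3.2}. Set $\alpha=(x_1\sqrt{k}+y_1\sqrt{l})/\sqrt{2}$ and $\beta=(x_1\sqrt{k}-y_1\sqrt{l})/\sqrt{2}$, so that $\alpha\beta=1$, $R:=(\alpha+\beta)^2=2kx_1^2$, and $R-4Q>0$. By Lemma~\ref{lem:2.2}, every positive integer solution of \eqref{eq:4} has $(x\sqrt{k}+y\sqrt{l})/\sqrt{2}=\alpha^n$ for some odd $n\in\mathbb{N}$, and comparing $\alpha^n\pm\beta^n$ with $\alpha\pm\beta$ gives $x=x_1Q_n$ and $y=y_1P_n$. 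Since $Q_n,P_n$ (for odd $n$) are symmetric in $\alpha,\beta$ and invariant under $z\mapsto-z$ with $z=\alpha+\beta$, they are polynomials in $z^2=R$; evaluating at $z=0$ (where $\alpha=i,\beta=-i$) gives $\tilde{Q}_n(0)=(-1)^{(n-1)/2}n$, and evaluating at $z=2$ (where $\alpha=\beta=1$) gives $\tilde{P}_n(2)=n$. Combined with $R\equiv 0\pmod k$ and $R\equiv 4\pmod l$ (the latter from $kx_1^2\equiv 2\pmod l$), these yield the fundamental congruences
\[Q_n\equiv\pm n\pmod k,\qquad P_n\equiv n\pmod l.\]

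Under the hypothesis of part (i), every prime of $x=x_1Q_n$ divides $k$, and the congruence above forces every prime of $Q_n$ to lie in $\gcd(k,n)$. The hard part will be to deduce $n\in\{1,3\}$. I would split the argument according to whether $n$ has a prime divisor $p\ge 5$ or is a pure power of $3$. If some prime $p\ge 5$ divides $n$, then Proposition~\ref{prop:Lehmer}(2) gives $Q_p\mid Q_n$, so every prime of $Q_p$ divides $\gcd(k,p)$ and hence equals $p$ by the congruence; thus $Q_p=p^t$ for some $t\ge 1$. A direct $p$-adic analysis of $\tilde{Q}_p(z)=\pm p+c_1z^2+\cdots$, using $v_p(c_1)\ge 1$ (from the explicit formula $c_1=(-1)^{(p-3)/2}p(p^2-1)/24$, obtained via $\tilde{Q}_p(z)=2T_p(z/2)/z$) and $v_p(R)\ge 1$ (from $p\mid k\mid R$), forces every non-constant term of $\tilde{Q}_p(z)|_{z^2=R}$ to contribute $v_p\ge 2$, so $v_p(Q_p)=1$ and $Q_p=p$. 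This contradicts the growth estimate $Q_p\ge\alpha^{p-1}/2>p$ valid for $p\ge 5$ and $\alpha\ge\sqrt{R/2}\ge\sqrt{3}$. In the remaining subcase $n=3^a$ with $a\ge 2$, the identity $Q_{3m}/Q_m=RQ_m^2-3$ (for $m$ odd) applied with $m=3$, together with $Q_3=3^s$ and $R=3(3^{s-1}+1)$, yields $Q_9/Q_3=3(3^{3s-1}+3^{2s}-1)$; since $3\nmid 3^{3s-1}+3^{2s}-1>1$ for $s\ge 1$, this factor produces a prime of $Q_9\mid Q_n$ not dividing $\gcd(k,n)=3$, a contradiction.

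It remains to resolve $n\in\{1,3\}$. For $n=1$, $x\sqrt{k}+y\sqrt{l}=\varepsilon$. For $n=3$, $Q_3=2kx_1^2-3$ has all its prime divisors in $\gcd(k,3)$; since $Q_3=1$ forces $kx_1^2=2$, impossible for $k$ odd, one has $3\mid k$ and $Q_3=3^s$ with $s\ge 1$, giving $2kx_1^2=3^s+3=3(3^{s-1}+1)$ and $x=x_1Q_3=3^sx_1$. A quick $3$-adic check shows $v_3(2kx_1^2)=1$ (for $s=1$, $kx_1^2=3$ forces $x_1=1$; for $s\ge 2$, $3\nmid 3^{s-1}+1$), so $3\nmid x_1$; finally $(x\sqrt{k}+y\sqrt{l})/\sqrt{2}=\alpha^3=(\varepsilon/\sqrt{2})^3$. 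Part (ii) is handled in parallel using $P_n\equiv n\pmod l$: the same two-case argument gives $n\in\{1,3\}$, and for $n=3$ one has $P_3=2kx_1^2-1=2ly_1^2+3=3^s$, whence $2ly_1^2=3^s-3$, $y=3^sy_1$, and $3\nmid y_1$ by the analogous $3$-adic check.
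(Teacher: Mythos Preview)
The paper does not supply its own proof of Theorem~\ref{Thm:3.3}; it merely quotes the result from \cite{luo} and remarks that it was obtained ``using the method in \cite{suny}''.  Your argument is exactly a realisation of that method: pass to the Lehmer pair $(\alpha,\beta)$ attached to $\varepsilon/\sqrt{2}$, write $x=x_1Q_n$, $y=y_1P_n$ for odd $n$, and exploit the congruences $Q_n\equiv\pm n\pmod k$, $P_n\equiv n\pmod l$ to force $n\in\{1,3\}$.  The treatment of a prime $p\ge 5$ dividing $n$ (showing $Q_p=p$ via a $p$-adic look at the Chebyshev expansion, then contradicting the growth $Q_p>\alpha^{p-1}/2$) and the elimination of $n=3^a$ with $a\ge 2$ via $Q_9/Q_3=3(3^{3s-1}+3^{2s}-1)$ are both correct; the same style of computation appears verbatim in the paper's proofs of Theorems~\ref{thm:3.7}--\ref{thm:3.11}.

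One small point to tighten for part~(ii): the ``parallel'' argument cannot literally reuse the identity $Q_{3m}/Q_m=RQ_m^2-3$.  The correct analogue is
\[
P_{3m}/P_m=\alpha^{2m}+1+\beta^{2m}=RQ_m^2-1\qquad(m\ \text{odd}),
\]
so with $P_3=2ly_1^2+3=3^s$ one has $R=3^s+1$, $Q_3=R-3=3^s-2$, and $P_9/P_3=(3^s+1)(3^s-2)^2-1\equiv 3\pmod 9$ for $s\ge 2$ (and equals $3$ only when $s=1$, which gives $ly_1^2=0$); hence $P_9$ acquires a prime $\neq 3$, the desired contradiction.  Likewise, for the $p\ge 5$ step in~(ii) the cleanest justification of $v_p(P_p)=1$ is the standard fact for Lucas sequences with $p\mid\Delta=R-4$ (here $p\mid l\mid R-4$) rather than the $R=0$ expansion you used for $Q_p$.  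With these cosmetic adjustments your proof is complete and matches the intended approach.
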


 \begin{theorem}{\rm (\cite{luo})} \label{Thm:3.4} Let $(x,y)$ be a positive integer solution of Diophantine equation \eqref{eq:5}.\\
(i) If every prime divisor of $x$  divides $k$ , then
$\varepsilon=x\sqrt{k}+y\sqrt{l}$ is the minimal positive solution
of equation \eqref{eq:5} except for the case  $(k,l,x,y)=(5,1,5,11).$\\
 (ii) If every prime divisor of $y$ divides $l$ , then $\varepsilon=x\sqrt{k}+y\sqrt{l}$ is the minimal positive solution of equation \eqref{eq:5}.
 \end{theorem}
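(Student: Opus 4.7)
Let $(x_1, y_1)$ be the minimal positive solution of \eqref{eq:5} and set $\alpha=(x_1\sqrt{k}+y_1\sqrt{l})/2$ and $\beta=(x_1\sqrt{k}-y_1\sqrt{l})/2$, so that $R=(\alpha+\beta)^2=kx_1^2$ and $Q=\alpha\beta=1$ are the Lehmer parameters; both are odd, since $2\nmid kl$ forces $k$ odd and the odd-solution hypothesis forces $x_1$ odd, and $R-4Q=ly_1^2>0$. By Lemma~\ref{lem:2.2}, every positive solution of \eqref{eq:5} has the form $x_n\sqrt{k}+y_n\sqrt{l}=2\alpha^n$ for some odd $n\ge 1$, and a short expansion gives
$$x_n=x_1\,Q_n(R,Q), \qquad y_n=y_1\,P_n(R,Q).$$
The task is to prove $n=1$, apart from the exception listed in~(i).

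For~(i), the hypothesis forces every prime of $Q_n$ to divide $k$, hence to divide $R$. For any odd prime $p\mid R$ with $p\nmid Q$ one has $\beta\equiv-\alpha$ above $p$, so $Q_n\equiv n\alpha^{n-1}\pmod p$ and $p\mid Q_n\Leftrightarrow p\mid n$. Every prime of the squarefree part of $Q_n$ therefore divides $n$; writing $Q_n=Au^2$ or $Q_n=2Au^2$ according to $v_2(Q_n)$, Lemma~\ref{lem:2.3} forces $n\in\{1,3,5\}$. I would rule out $n=3$ by observing $\gcd(Q_3,k)=\gcd(kx_1^2-3,k)\mid 3$, which forces $kx_1^2=3+3^s$ to be even, contradicting the odd parity of $k$ and $x_1$. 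For $n=5$, $Q_5=(kx_1^2)^2-5(kx_1^2)+5$: if $5\nmid k$ the identity $Q_5=1$ factors as $(kx_1^2-1)(kx_1^2-4)=0$, which fails on parity; if $5\mid k$, writing $k=5k'$ gives $Q_5=5\bigl(5k'x_1^2(k'x_1^2-1)+1\bigr)$, and the bracketed factor is coprime to $5k'$, so it must equal $1$, pinning $k'=x_1=1$; then $(k,l,x_1,y_1)=(5,1,1,1)$ and $(x_5,y_5)=(5,11)$, precisely the stated exception.

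For~(ii), the mirror computation (with $\alpha\equiv\beta$ above an odd prime $p\mid R-4Q$) gives $p\mid P_n\Leftrightarrow p\mid n$, so every prime of $P_n$ divides $n$. A clean reduction handles $3\mid n$: Lemma~\ref{lem:2.4}(5) says $\alpha^3$ is the minimal positive solution of \eqref{eq:2}, so for $n=3m$ one has $(x_n,y_n)=2(X_m,Y_m)$ with $(X_m,Y_m)$ the $m$-th positive solution of \eqref{eq:2}, making $y_n$ even---impossible since $l$ is odd. For odd $n\ge 5$ with $3\nmid n$ the constraint that every prime of $P_n$ divides both $n$ and $l\mid R-4Q$ becomes highly restrictive; for instance, at $n=5$ the identity $P_5=5^t$ rearranges to $(2R-3)^2=5+4\cdot 5^t$, and reduction modulo $25$ quickly forces $t\le 1$, hence $R\le 4$, contradicting $R\ge 5$. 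Remaining odd $n\ge 7$ with $3\nmid n$ yield to the same polynomial-in-$R$ analysis of $P_n$, with an a-priori bound on $n$ provided by the Bilu--Hanrot--Voutier theorem on primitive divisors of Lehmer sequences: a primitive prime divisor of $P_n$ cannot divide $l\mid R-4Q$, so $n\le 30$, after which only finitely many cases remain.

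The principal obstacle I expect is the delicate $p$-adic behaviour of the Lehmer numbers: the naive identity $v_p(P_n)=v_p(n)$ can fail at $p=3$ (for instance when $R\equiv 1\pmod 9$, where $v_3(P_3)$ may exceed $1$), which is why the Lemma~\ref{lem:2.4}(5) reduction is the cleanest way to absorb the case $3\mid n$ in~(ii); similarly in~(i), the $2$-adic part of $Q_n$ must be funnelled into the second branch of Lemma~\ref{lem:2.3}, rather than estimated directly. Once these wrinkles are addressed, the remaining work in both parts is the parity-driven finite case analysis sketched above.
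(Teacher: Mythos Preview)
The paper does not prove Theorem~\ref{Thm:3.4}; it is quoted from \cite{luo} and used as a black box (see Remark~\ref{rmk:3.1}). So there is no in-paper proof to compare against, and I can only assess your plan on its merits.

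Your argument for part~(i) is complete and correct. The reduction $x_n=x_1Q_n$, the congruence $Q_n\equiv(-1)^{(n-1)/2}n\pmod p$ for odd $p\mid R$, and the appeal to Lemma~\ref{lem:2.3} cleanly force $n\in\{1,3,5\}$; your parity elimination of $n=3$ and your factorisation of $Q_5$ pinning down $(k,l,x_1,y_1)=(5,1,1,1)$ are both sound. (In fact $n=3$ dies even faster: $Q_3=kx_1^2-3$ is even, so $2\mid x_3$ while $2\nmid k$.)

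For part~(ii) your strategy is right but the execution is incomplete. The congruence $P_n\equiv nQ^{(n-1)/2}\pmod p$ for odd $p\mid R-4Q$ is correct, the elimination of $3\mid n$ via Lemma~\ref{lem:2.4}(5) is clean, and the $n=5$ computation is fine. The BHV bound $n\le 30$ is valid, but you then leave the odd, $3$-free values $n\in\{7,11,13,17,19,23,25,29\}$ unexamined. These are not all routine: for each such $n$ you must show that $P_n$ cannot be supported entirely on primes dividing $ly_1^2$, and the ad hoc argument you gave for $n=5$ (a single quadratic in $R$) becomes a degree-$(n-1)/2$ polynomial identity with no obvious modular obstruction. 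The original proof in \cite{luo} (and the later square-class results in \cite{luoy}, which is where Lemma~\ref{lem:2.3} comes from) avoids BHV entirely by proving a $P_n$-analogue of Lemma~\ref{lem:2.3}: if $P_n=Au^2$ with $A$ squarefree and $A\mid n$, then $n$ is forced into a short explicit list. Invoking that result, rather than BHV plus a nine-case check, is both historically accurate and substantially less work; as written, your plan for~(ii) has a genuine gap at the final case analysis.
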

\begin{remark}\label{rmk:3.1} From the proofs of Theorems \ref{Thm:3.2}, \ref{Thm:3.3}, \ref{Thm:3.4} in \cite{luo},
\cite{wal67},  \cite{suny}, one can easily observe that the above
Theorems are also true if every prime divisor of $x$ divides $k$ or
$x_1$ and if every prime divisor of $y$ divides $l$ or $y_1$.
\end{remark}

Hanfei Mei, Long Mei, Qiyi Fan, Wei Song showed the following theorem.
\begin{theorem}{\rm (\cite{mei01})} \label{Thm:3.5} Let $D$ be a positive nonsquare integer. Let $(x, \, y)$ be a positive integer solution of Pell equation
 \begin{equation}\label{eq:8}
 x^2-Dy^2= 1,
 \end{equation}
 with $y=p^ny'$, where $p$ is a prime not dividing $D$ and $n\in \mathbb{N}$. If every prime divisor of $y'$ divides $D$, then $x+y\sqrt{D}=\varepsilon$ or $\varepsilon^2$ or $\varepsilon^3$, where $\varepsilon=x_1+y_1\sqrt{D}$ is the fundamental solution of equation \eqref{eq:8}.
 \end{theorem}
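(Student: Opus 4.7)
By Lemma~\ref{lem:2.1}, write $(x,y)=(x_m,y_m)$ with $x_m+y_m\sqrt D=\varepsilon^m$ for some $m\in\mathbb N$, so the goal is to rule out $m\geq 4$. Set $\alpha=\varepsilon$, $\beta=x_1-y_1\sqrt D$, so that $\alpha\beta=1$, $\alpha+\beta=2x_1$, and the Lucas sequences $U_n=(\alpha^n-\beta^n)/(\alpha-\beta)$ and $V_n=\alpha^n+\beta^n$ give $y_n=y_1U_n$ and $2x_n=V_n$, together with the standard identities $U_{2k}=U_kV_k$ and $\gcd(U_m,U_n)=U_{\gcd(m,n)}$; equivalently these are the Lehmer sequences of Section~\ref{sec:2} with $R=4x_1^2$, $Q=1$, so the Proposition on Lehmer sequences applies. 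Writing $S:=\{q\text{ prime}:q\mid D\}$, the divisibility $y_d\mid y_m$ for each $d\mid m$ propagates the hypothesis to every $y_d$. The central arithmetic lever is the Pell congruence $x_1^2\equiv 1\pmod q$ (hence $x_1\equiv\pm 1\pmod q$) valid for every $q\in S$. I assume $m\geq 4$ for contradiction.

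The small cases are dispatched by direct factorisation combined with the Pell congruence. For $m=4$, the pairwise-coprime (up to $2$) factorisation $y_4=4x_1y_1(2x_1^2-1)$ shows that no prime of $S$ can divide $x_1$ (else $x_1\equiv 0\not\equiv\pm 1\pmod q$) nor $2x_1^2-1$ (else $2\equiv 1\pmod q$), so $x_1=p^a$ and $2x_1^2-1=p^b$ with $a,b\geq 1$, giving the contradiction $p\mid 1$. For $m=6$, $U_6=2x_1(2x_1-1)(2x_1+1)(4x_1^2-3)$ excludes every prime of $S$ from $4x_1^2-3$ (else $4\equiv 3\pmod q$), so $4p^{2a}-3=p^d$ with $a\geq 1$ forces $p=3$, after which reduction modulo $3$ yields a contradiction. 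For $m=5$, the difference-of-squares identity $U_5=(4x_1^2-1)^2-(2x_1)^2=(4x_1^2-2x_1-1)(4x_1^2+2x_1-1)$ produces two coprime factors; the Pell congruence restricts any prime of $S$ dividing either factor to $5$ (and in fact only one factor is divisible by $5$, determined by $x_1$ mod $5$), and an explicit $5$-adic analysis (writing $x_1=\pm 1+5t$ and computing modulo $25$) forces any $5$-power factor to equal $5$, leading to $4x_1^2\pm 2x_1-1=5$ with no integer solution $x_1\geq 2$.

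For general $m\geq 7$, if $m$ has a divisor $d\in\{4,5,6,9\}$ the above base cases (together with an analogous treatment of $m=9$ via $U_9=U_3(V_3-1)(V_3+1)$, whose three factors are pairwise coprime up to $3$) furnish the contradiction through $y_d\mid y_m$. The only remaining situation is $m$ prime with $m\geq 7$. Here I invoke Carmichael's primitive-divisor theorem, applicable since $\gcd(P,Q)=\gcd(2x_1,1)=1$ and $P^2-4Q=4Dy_1^2>0$: any primitive prime divisor $q$ of $U_m$ satisfies $q\nmid\Delta Q=4Dy_1^2$, so $q\notin S$ and $q\notin\{2\}\cup\{\text{primes of }y_1\}$, forcing $q=p$. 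Since $m$ is prime, every prime divisor of $U_m$ is either primitive or divides $\Delta Q\subset\{2\}\cup S\cup\{p\}$, and a Bilu--Hanrot--Voutier-type analysis excludes $U_m$ from being expressible as $p^c\cdot 2^e\cdot\prod_{q\in S}q^{e_q}$ for any prime $m\geq 7$. The main obstacle of the plan lies precisely in this final step: ruling out the Nagell--Ljunggren-type shape of $U_m$ for $m\geq 7$ prime genuinely requires the BHV classification (or a direct count of at least two distinct primitive prime divisors of $U_m$), and is the one ingredient that goes beyond the elementary identities of Section~\ref{sec:2}.
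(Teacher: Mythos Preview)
Your plan has two genuine gaps. First, the reduction to prime $m$ fails: you claim that for $m\geq 7$, once $m$ has no divisor in $\{4,5,6,9\}$, ``the only remaining situation is $m$ prime with $m\geq 7$''. But $m=14,21,22,33,49,77,\ldots$ have no divisor in $\{4,5,6,9\}$ and are composite, so your list of base cases does not cover all composite $m$. Second, even for prime $m\geq 7$, Carmichael/BHV only supplies \emph{one} primitive prime $q\mid U_m$ with $q\nmid 4Dy_1^2$; this excludes $q\in S$ and $q\mid y_1$, but nothing excludes $q=p$. You are then left with $U_m=p^b$ or $U_m=m\cdot p^b$ (the latter when $m\in S$, and then $m\parallel U_m$), which is exactly the Nagell--Ljunggren shape you flag as the ``main obstacle'' and do not resolve. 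So the heavy machinery you invoke does not actually finish the argument.

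The paper does not give its own proof of Theorem~\ref{Thm:3.5} (it is quoted from \cite{mei01}), but the proofs of the parallel Theorems~\ref{thm:3.7} and~\ref{thm:3.9} show how both gaps close elementarily. For the odd part of $m$, write $m=m_1q^r$ with $q$ prime and $(m_1,q)=1$: if $p\nmid P_q$ then every prime of $y_q=y_1P_q$ divides $D$, and St\"ormer's Theorem~\ref{Thm:3.1} forces the absurd $q=1$; hence $p\mid P_q$, and since $\gcd(P_{m_1},P_q)=P_1=1$ one gets $p\nmid P_{m_1}$, so Theorem~\ref{Thm:3.1} applied to $y_{m_1}$ yields $m_1=1$. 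Thus $m=q^r$ with no enumeration of composites. The analysis of $P_{q,q}$ (exactly as in those proofs) then gives $r=1$. Finally, for an odd prime $m=q\geq 5$ the identity $U_q=U_{k+1}^2-U_k^2=(U_{k+1}-U_k)(U_{k+1}+U_k)$ with $q=2k+1$ gives two coprime odd factors, each already exceeding $q$ once $x_1\geq 2$ (indeed $U_{k+1}-U_k\geq 3\cdot 2^{k-1}$); hence $U_q$ can be neither $p^b$ nor $q\cdot p^b$. This replaces your BHV black box by a one-line growth estimate.
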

 \begin{remark}\label{rmk:3.2} If $x+y\sqrt{D}=\varepsilon^3$, then $y=3^sp^ny_1,s\in \mathbb{N},s>1$ except for $(x,y,D)=(26,15,3)$.
 \end{remark}

Furthermore, Hanfei Mei, Shenfang Sun proved the following result.
\begin{theorem}{\rm (\cite{mei02})}\label{thm:3.6} Let $(x, \, y)$
 be a positive integer solution of Pell equation \eqref{eq:8}
 with $y=p_1^{n_1}p_2^{n_2}y'$, where $p_1,p_2$ are any primes not dividing $D$ and $n_1,n_2\in \mathbb{N}$. If every prime divisor of $y'$ divides $D$, then $x+y\sqrt{D}=\varepsilon$ or $\varepsilon^2$ or $\varepsilon^3$ or $\varepsilon^4$ or $\varepsilon^6$ or $\varepsilon^{q^r}$, where $\varepsilon=x_1+y_1\sqrt{D}$ is the fundamental solution of \eqref{eq:8}, $q$ an odd prime, and $r\in \mathbb{N}$.
 \end{theorem}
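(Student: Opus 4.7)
The plan is to exploit the Lehmer sequence structure of Pell solutions and combine the primitive prime divisor theorem of Bilu-Hanrot-Voutier with the arithmetic relations of Proposition~\ref{prop:Lehmer} and Lemma~\ref{lem:2.3}.

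First, by Lemma~\ref{lem:2.1}, any positive integer solution has the form $x+y\sqrt{D}=\varepsilon^n$ for some $n\in\mathbb{N}$. Setting $\alpha=\varepsilon$ and $\beta=x_1-y_1\sqrt{D}$, one obtains a Lehmer pair with parameters $R=4x_1^2$, $Q=1$, and discriminant $4Dy_1^2$. Expanding $\alpha^n=x_n+y_n\sqrt{D}$ and matching with the definitions of $P_n$ and $Q_n$ gives
\begin{equation*}
 y_n = y_1 P_n \quad (n \text{ odd}), \qquad y_n = 2x_1 y_1 P_n \quad (n \text{ even}),
\end{equation*}
so the hypothesis constrains $P_n$ (and, in the even case, $2x_1$) to have at most two prime divisors outside $\mathrm{rad}(D)$.

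Next I would invoke the primitive divisor theorem: for every divisor $d$ of $n$ outside the Bilu-Hanrot-Voutier exceptional list, $P_d$ carries a primitive prime $\pi_d$ whose rank of apparition equals $d$. Since every odd prime $p\mid D$ divides $\Delta=4Dy_1^2$, its rank is $\rho(p)=p$ by the classical ramified-case formula; hence $\pi_d$ can divide $D$ only when $d=\pi_d$ is itself prime. Equivalently, every composite $d\mid n$ contributes a \emph{new} prime, and distinct $d$'s give distinct primitive primes. The hypothesis of only two new primes $p_1,p_2$ therefore caps the number of composite divisors of $n$, leaving a short list of candidate shapes for $n$ (prime powers of odd primes, together with a handful of small values).

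The hardest step will be the mixed case $n=2^a m$ with $m$ odd and $m\geq 3$. Here I plan to iterate the identity $P_{2k}=2P_kQ_k$ of Proposition~\ref{prop:Lehmer}(\ref{item:Lehmer-6}) to expose the cumulative factors $Q_m, Q_{2m}, \ldots, Q_{2^{a-1}m}$ hidden inside $P_n$, combine them with the coprimality relations of Proposition~\ref{prop:Lehmer}(\ref{item:Lehmer-4})-(\ref{item:Lehmer-5}) to separate their prime contributions, and invoke Lemma~\ref{lem:2.3} (which pins $Q_k=ku^2$ or $Q_k=2ku^2$ down to $k\in\{1,3,5\}$ or $k=3$, respectively) to rule out the accidental equalities that would otherwise allow too many bad primes to be absorbed by a single $p_i$. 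What remains after this bookkeeping is $n\in\{1,2,3,4,6\}$ together with the odd prime powers $q^r$, and a final inspection of the finite BHV exceptional list of Lucas/Lehmer pairs confirms that no extra $n$ slips through.
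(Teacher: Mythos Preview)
The paper does not actually prove Theorem~\ref{thm:3.6}; it is quoted from \cite{mei02}. The paper does, however, prove the analogous Theorems~\ref{thm:3.8} and~\ref{thm:3.10}, and the method there is quite different from yours: it is entirely elementary, bootstrapping from the one--prime case (Theorem~\ref{Thm:3.5} or its analogues) by writing $m=m_1q^r$, applying the one--prime theorem to the factor $m_1$, and then ruling out $m=pq$ with two distinct odd primes by the explicit identity $P_p=P_{p,q}$, which is impossible. No appeal to Bilu--Hanrot--Voutier is made.

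Your BHV\,/\,rank-of-apparition approach is a legitimate alternative and would ultimately prove a sharper statement (indeed, it bounds the exponent $r$ in $q^r$, consistent with Remark~\ref{rmk:3.3}). But the sketch has a real gap. Counting composite divisors via primitive primes correctly yields at most two composite divisors of $n$, hence
\[
n\in\{1,\;p,\;p^2,\;p^3,\;pq\}\quad(\text{up to BHV exceptions and the even case}),
\]
not ``prime powers $q^r$'' as you write. The case $n=pq$ with $p,q$ distinct odd primes has only \emph{one} composite divisor, so your counting argument does not exclude it, yet $pq$ is not in the theorem's list. To kill it you must argue further: if $\rho(p_1)=pq$ then $p_2$ can divide at most one of $P_p,P_q$ (since it has a unique rank), forcing, say, every prime of $P_q$ to divide $D$, hence $P_q=q^c$; then Proposition~\ref{prop:Lehmer}(\ref{item:Lehmer-7}) gives $c\le 1$ while $P_q\ge P_3=4x_1^2-1>q$, a contradiction. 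This step (or the paper's $P_p=P_{p,q}$ trick) is essential and is missing from your plan. A parallel issue arises for $n=2p$ with $p>3$ odd, which also has a single composite divisor and is not covered by the even-case outline you give.
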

\begin{remark}\label{rmk:3.3}
We can prove that $r=1$ and $(q,p_1p_2)=1$.
\end{remark}
On Diophantine equations
\begin{equation}\label{eq:9}
 x^2-Dy^2= -1,
 \end{equation}
 \begin{equation}\label{eq:10}
 x^2-Dy^2= 4,
 \end{equation}
 \begin{equation}\label{eq:11}
 kx^2-ly^2= 4,
 \end{equation}
similar results can be obtained. In fact, we have a series of results.
\begin{theorem}\label{thm:3.7} Let $D$ be a positive nonsquare integer. Let $(x,\, y)$ be a positive integer solution of Pell equation \eqref{eq:9} with $y=p^ny'$, where $p$ is a prime not dividing $D$ and $n\in \mathbb{N}$. If every prime divisor of $y'$ divides $D$, then $x+y\sqrt{D}=\varepsilon$ or $\varepsilon^q$, where $\varepsilon=x_1+y_1\sqrt{D}$ is the fundamental solution of equation \eqref{eq:9} and $q$ an odd prime with $(p,q)=1$.
\end{theorem}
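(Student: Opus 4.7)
The plan is to adapt the arguments for Theorems~\ref{Thm:3.1} and~\ref{Thm:3.5} to the $-1$ signature, combining the Lehmer sequence attached to $\varepsilon$ with the Bilu--Hanrot--Voutier (BHV) primitive prime divisor theorem. By standard Pell theory (the fundamental solution of the $+1$ equation is $\varepsilon^{2}$, and odd powers of $\varepsilon$ exhaust the solutions of $x^{2}-Dy^{2}=-1$), every positive solution has the form $x+y\sqrt{D}=\varepsilon^{N}$ for a unique odd positive integer $N$. Setting $\alpha=\varepsilon$, $\beta=\bar\varepsilon=x_{1}-y_{1}\sqrt{D}$, we have $\alpha+\beta=2x_{1}$, $\alpha\beta=-1$, and the Lehmer sequence $P_{n}=P_{n}(R,Q)$ with $R=4x_{1}^{2}$, $Q=-1$ is well defined: the parameters are coprime integers with $R-4Q=4Dy_{1}^{2}>0$, so Proposition~\ref{prop:Lehmer} applies. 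A direct computation gives $y=y_{1}P_{N}$ for odd $N$; since $y_{1}\mid y$, the hypothesis translates into the statement that every prime divisor of $P_{N}$ lies in the finite set $S=\{p\}\cup\{\ell\text{ prime}:\ell\mid D\}$.

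If $N=1$ we are done; assume $N\ge 3$, and I will show $N$ must be prime. Recall that a \emph{primitive} prime divisor $r$ of $P_{m}$ is a prime with $r\mid P_{m}$, $r\nmid P_{m'}$ for every proper divisor $m'\mid m$, and $r\nmid\Delta=4Dy_{1}^{2}$; in particular $r\neq 2$, $r\nmid D$, $r\nmid y_{1}$. Suppose first that $N$ is not a prime power, so $N$ has two coprime divisors $N_{1},N_{2}>1$. BHV supplies primitive primes $r_{i}$ of $P_{N_{i}}$, and by Proposition~\ref{prop:Lehmer}(\ref{item:Lehmer-3}) any common divisor of $P_{N_{1}},P_{N_{2}}$ divides $P_{\gcd(N_{1},N_{2})}=P_{1}=1$; hence $r_{1}\neq r_{2}$, yet both lie in $S\setminus\{\ell\mid D\}=\{p\}$, a contradiction. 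Suppose instead $N=q^{k}$ with $k\ge 2$. BHV furnishes primitive primes $r_{k}$ of $P_{q^{k}}$ and $r_{k-1}$ of $P_{q^{k-1}}$; they are distinct by primitivity, yet both are forced to equal $p$, again a contradiction. Hence $N=q$ is an odd prime, and the finite list of BHV exceptional indices is settled by direct computation against the set $S$.

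To finish I must verify $(p,q)=1$. First, if $p=2$, then $p\nmid D$ forces $D$ odd, and the equation $x^{2}-Dy^{2}=-1$ then forces $y$ odd (else $x^{2}\equiv -1\pmod 4$), contradicting $p=2\mid y$; so $p$ is odd. Now suppose $N=q=p$, and split into two subcases. If $p\nmid y_{1}$, then Proposition~\ref{prop:Lehmer}(\ref{item:Lehmer-7}) applied with $m=1$ (and $P_{1}=1$, $p\nmid P_{1}$) yields $p\nmid P_{p}$, whence $v_{p}(y)=v_{p}(y_{1})+v_{p}(P_{p})=0$, contradicting $v_{p}(y)=n\ge 1$. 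If $p\mid y_{1}$, then $p\mid\Delta$, so a primitive prime $r$ of $P_{p}$ supplied by BHV automatically satisfies $r\neq p$ and $r\nmid D$; but $r\in S=\{p\}\cup\{\ell\mid D\}$, a contradiction. Thus $q\neq p$.

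The chief obstacle will be the reliance on BHV for the specific Lehmer pair $(\alpha,\beta)=(\varepsilon,\bar\varepsilon)$: while BHV gives clean structural control, its finite list of exceptions must be checked case-by-case against the single-prime-outside-$D$ constraint imposed by the hypothesis. An elementary substitute --- invoking only Proposition~\ref{prop:Lehmer}(\ref{item:Lehmer-3}),~(\ref{item:Lehmer-7}) together with the cyclotomic-type decomposition $P_{N}=\prod_{d\mid N,\,d>1}F_{d}$ and induction on the number of prime factors of $N$ --- is available, but the bookkeeping of how primitive primes populate the singleton $\{p\}$ outside the primes of $D$ then becomes the technical heart of the argument.
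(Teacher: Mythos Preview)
Your argument is correct in outline, but it takes a genuinely different route from the paper's proof. The paper is entirely elementary: assuming $p\nmid y_{1}$ and writing $m=m_{1}q^{r}$, it first uses St\"ormer's Theorem~\ref{Thm:3.1} itself (not BHV) to force $p\mid P_{q}$ and then $m_{1}=1$ via $\gcd(P_{m_{1}},P_{q})=1$. To kill $r\ge 2$ it expands $P_{q,q}$ by the binomial formula, shows any prime of $P_{q,q}$ dividing $D$ must equal $q$, bounds $P_{q,q}$ below to force $q=3$, and then exploits the signature: $3\mid D$ would give $(-1\mid 3)=1$, a contradiction. The coprimality $q\ne p$ is read off directly from the expansion of $P_{q}$. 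No primitive-divisor machinery enters.

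Your BHV-based approach is cleaner conceptually and handles the reduction to a prime exponent in two strokes, but it trades self-containment for dependence on a deep theorem whose exception table you defer to ``direct computation.'' That deferral is the one soft spot: for a complete proof you must verify that no defective Lucas index arises for the pair $(2x_{1},-1)$ at the odd indices you invoke (in fact none does, since $3\nmid D$ is forced by the $-1$ equation and $P_{3}=4x_{1}^{2}+1$ always has a prime factor coprime to $\Delta$), but you should say so explicitly rather than leave it as an exercise. A minor point: your appeal to Proposition~\ref{prop:Lehmer}(\ref{item:Lehmer-7}) for $p\nmid P_{p}$ is morally right but that item is stated somewhat opaquely in the paper; the clean statement you are using is that for odd $p\nmid\Delta$ the rank of apparition of $p$ divides $p\pm 1$, hence cannot equal $p$, so $p\nmid P_{p}$. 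With these two clarifications your proof stands as a valid, shorter, but less elementary alternative.
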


\begin{proof} It is easy to see that the result is true if $p|y_1$
by Remark \ref{rmk:3.1}. Now we assume that $p\nmid y_1$. By
Lemma \ref{lem:2.2}, we know that
\begin{equation}\label{eq:12}
 x+y\sqrt{D}=\left(x_1+y_1\sqrt{D}\right)^m,
 \end{equation}
 for some odd integer $m$. If $m=1$, there is nothing to do. Hence
 we assume $m>1$. We write $m=m_1q^r$, where $q$ is a prime divisor of $m$ and $(m_1,q)=1, r\in \mathbb{N}.$ We claim that $p|P_q$. Otherwise every prime divisor of $y_q=y_1P_q$ divides $D$ by the assumption. It follows that $q=1$ by Theorem \ref{Thm:3.1}. This is a contradiction. By Proposition~\ref{prop:Lehmer} we know that
 $(P_{m_1},P_q)=P_{(m_1,q)}=P_1=1.$ It implies that every prime
 divisor of $y_{m_1}=y_1P_{m_1}$ divides $D$ since  $y_{m_1}|y_m=y=p^ny'.$ Thus $m_1=1$ again by Theorem 3.1, and so
 $m=q^r$. It is obvious that $q\neq p$ since
 $$P_q=\sum_{r=0}^{(q-1)/2}\binom
 q{2r+1}x_1^{q-2r-1}(Dy_1^2)^r.$$
If $r>1$, then
 \begin{equation}\label{eq:13}
 P_{q,q}=\sum_{r=0}^{(q-1)/2}\binom
 q{2r+1}x_q^{q-2r-1}(Dy_q^2)^r.
\end{equation}
Since $(P_q,P_{q,q})|q$, hence $p\nmid P_{q,q}$ and thus every prime divisor $P$ of $P_{q,q}$ divides $D$. Then from equation \eqref{eq:13} we get $P|qx_q^{q-1}$, and so $P=q$. If $q>3$, we claim that $P_{q,q}=q$. Otherwise, from equation \eqref{eq:13} we have $q^2|qx_q^{q-1}$ which is impossible. On the other hand, if $q>3$, then
$$P_{q,q}=\sum_{r=0}^{(q-1)/2}\binom
 q{2r+1}x_q^{q-2r-1}(Dy_q^2)^r>q.$$
 This is a contradiction. So we obtain $q=3$, whence $3|D$. Since $x_q^2-Dy_q^2=-1$, we get $-1=(-1|3)=1$. This is a contradiction. Thus $r=1$. This completes the proof of Theorem \ref{thm:3.7}.
 \end{proof}

\begin{theorem}\label{thm:3.8} Let $(x, \, y)$ be a positive integer solution of Pell equation \eqref{eq:9} with $y=p_1^{n_1}p_2^{n_2}y'$, where $p_1, p_2$ are primes not dividing $D$ and $n_1, n_2\in \mathbb{N}$.  If every prime divisor of $y'$ divides $D$, then $x+y\sqrt{D}=\varepsilon$ or   $\varepsilon^q$ or $\varepsilon^{q^2}$, where $\varepsilon=x_1+y_1\sqrt{D}$ is the fundamental solution of equation \eqref{eq:9}, $q$ an odd prime with $(q,p_1p_2)=1$.
 \end{theorem}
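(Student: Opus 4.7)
The plan is to follow the scheme of Theorem \ref{thm:3.7} but track how the two ``free'' primes $p_1, p_2$ get distributed across the Lehmer factors. First I would reduce to the case $p_1, p_2 \nmid y_1$: if $p_i \mid y_1$ for some $i$, then Remark \ref{rmk:3.1} combined with Theorem \ref{thm:3.7} applied to the remaining prime forces $m = 1$. Under this reduction every prime divisor of $y_1$ divides $D$, and by Lemma \ref{lem:2.2} I write $x + y\sqrt{D} = \varepsilon^m$ with $m$ odd and $y_m = y_1 P_m$, so that every prime divisor of $P_m$ lies in $\{p_1, p_2\}$ or divides $D$. Assume $m > 1$.

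For every prime $q \mid m$, Theorem \ref{Thm:3.1} applied to $(x_q, y_q)$ forces $P_q$ to contain a prime in $\{p_1, p_2\}$; otherwise all primes of $y_q$ divide $D$, forcing $q = 1$. Combined with $\gcd(P_{q_i}, P_{q_j}) = P_1 = 1$ for distinct primes $q_i, q_j \mid m$ (Proposition \ref{prop:Lehmer}, item \ref{item:Lehmer-3}), pigeonhole on $\{p_1, p_2\}$ limits $m$ to at most two distinct prime divisors. The main obstacle is ruling out exactly two: suppose $m = q_1^{a_1} q_2^{a_2}$ with $q_1 \ne q_2$, and WLOG $p_1 \mid P_{q_1}$, $p_2 \mid P_{q_2}$, so that the rank of apparition of $p_i$ equals $q_i$. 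I form $\Phi = P_{q_1 q_2}/(P_{q_1} P_{q_2}) \in \mathbb{Z}$ and apply a lifting-the-exponent computation to each candidate prime of $\Phi$: for $p_i$ this yields $\operatorname{ord}_{p_i}(P_{q_1 q_2}) = \operatorname{ord}_{p_i}(P_{q_i})$, so $\operatorname{ord}_{p_i}(\Phi) = 0$; for any $p \mid D$ dividing $P_{q_1 q_2}$, the reduction $P_n \equiv n x_1^{n-1} \pmod{p}$ (valid for $p \nmid 2 x_1$) shows the Lehmer rank of $p$ is $p$ itself, so $p \in \{q_1, q_2\}$, and the same LTE yields $\operatorname{ord}_p(\Phi) = 0$. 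Therefore $\Phi = 1$, contradicting $\Phi > 1$ coming from the growth $P_n \sim \alpha^n/(\alpha - \beta)$.

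For $m = q^a$ a prime power, I telescope $P_{q^a} = F_1 F_2 \cdots F_a$ where $F_i = P_{q^i}/P_{q^{i-1}}$. By LTE, each of $p_1, p_2$ contributes to exactly one factor $F_{b_i}$ (the level of its rank $q^{b_i}$); the remaining factors $F_j$ with $j \notin \{b_1, b_2\}$ have prime divisors confined to $\{q\}$. The binomial expansion of $F_j$ analogous to \eqref{eq:13} then forces either $F_j = q$ (if $q \mid D$, where $\operatorname{ord}_q(F_j) = 1$) or $F_j = 1$ (if $q \nmid D$, since $F_j \equiv \pm 1 \pmod{q}$ by Euler's criterion), both of which fail the size bound $F_j > q$ for $j \ge 2$. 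So at most one factor beyond $F_1$ can be ``absorbed,'' yielding $a \le 2$. Finally, the coprimality $(q, p_1 p_2) = 1$ follows from $P_q \equiv (D y_1^2)^{(q-1)/2} \pmod{q}$: if $q = p_i$, then $p_i \mid y$ together with $p_i \nmid y_1$ would force $p_i \mid P_{p_i} \equiv \pm 1 \pmod{p_i}$, a contradiction analogous to the ``$q \ne p$'' step in Theorem \ref{thm:3.7}. The hardest step will be the two-distinct-prime elimination above, where the LTE bookkeeping across $\{p_1, p_2, q_1, q_2\}$ together with primes of $D$ must be carried out uniformly to obtain $\Phi = 1$.
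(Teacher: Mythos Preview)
Your overall architecture matches the paper's: reduce to $p_1,p_2\nmid y_1$, use coprimality of the $P_q$'s plus pigeonhole to bound the number of prime divisors of $m$, eliminate the two-distinct-prime case, then bound the exponent in the prime-power case. The prime-power telescoping $F_j=P_{q^j}/P_{q^{j-1}}$ is essentially the paper's ``$P_{q,q^{j-1}}$'' argument recast in LTE language and is sound once $(q,p_1p_2)=1$ is secured.

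The genuine gap is in your two-prime elimination. Your LTE step gives
\[
\operatorname{ord}_{p_i}(\Phi)=\operatorname{ord}_{p_i}(P_{q_1q_2})-\operatorname{ord}_{p_i}(P_{q_i})-\operatorname{ord}_{p_i}(P_{q_j})=\operatorname{ord}_{p_i}(q_j)\qquad(j\ne i),
\]
which vanishes only if $p_i\ne q_j$. Your congruence $P_q\equiv (Dy_1^2)^{(q-1)/2}\pmod q$ only excludes $p_i=q_i$ (the prime whose $P_{q_i}$ actually contains $p_i$); it says nothing about the cross case $p_1=q_2$. Rank considerations do not help either: the rank of $p_1$ being $q_1$ merely forces $q_1\mid p_1\pm1$, which is perfectly compatible with $p_1=q_2$. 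In that situation $\operatorname{ord}_{p_1}(\Phi)=1$ and your conclusion $\Phi=1$ fails. The same defect appears in your final coprimality argument: from $q=p_i$ and $p_i\mid P_m$ you cannot infer $p_i\mid P_{p_i}$, since the rank of $p_i$ could be a higher power of $q$ sitting inside $m$. The paper sidesteps this entirely by a different device: after reducing to $m=pq$ it factors $P_{pq}=P_qP_{p,q}=P_pP_{q,p}$, uses $(P_q,P_{p,q})=1$ together with the binomial bound on the $D$-part to pin both $P_p$ and $P_{p,q}$ down to $u\cdot p_1^{n_1}$ with $u\in\{1,p\}$, and obtains the size contradiction $P_p=P_{p,q}$. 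If you want to keep the $\Phi$-approach you must first establish $(m,p_1p_2)=1$ by an independent argument, or else track the possible extra factor $p_i=q_j$ through $\Phi$ and kill it separately. (Minor: your opening reduction ``forces $m=1$'' overstates Theorem~\ref{thm:3.7}; you only get $m\in\{1,q\}$, which is already inside the target set.)
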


\begin{proof} By Lemma \ref{lem:2.2}, we know that
\begin{equation}\label{eq:14}
 x+y\sqrt{D}=\left(x_1+y_1\sqrt{D}\right)^m
 \end{equation}
 for some odd integer $m$. It is enough to prove the result for $m>1$ and $p_1\nmid y_1,\, p_2\nmid y_1.$ Since $y_1|y=y_m = p_1^{n_1}p_2^{n_2}y'$, so we have that $y'=y_1u.$ Write $m=m_1q^r$, where $q$ is a prime divisor of $m$ and $(m_1, q)=1, r\in \mathbb{N}.$ If $m_1>1$, using what we proved in Theorem \ref{thm:3.7}, we know that $(m, p_1p_2)=1$  and $p_1|P_{m_1},\, p_2|P_{q^r}$ or $p_2|P_{m_1},\, p_1|P_{q^r}$. Without loss of generality, we assume that $p_1|P_{m_1},\, p_2|P_{q^r}$.
 Then since $(P_{m_1}, P_{q^r})=P_{(m_1,q^r)}=P_1=1$, $m_1$ is an odd prime. Say $m_1=p$ and $r=1$ by Theorem \ref{thm:3.7}. Therefore $m=pq$. From
\begin{equation}\label{eq:15}
 y_1uP_1^{n_1}P_2^{n_2}=y=y_{pq}=y_qP_{p,q},
 \end{equation}
 we get
 \begin{equation}\label{eq:16}
 uP_1^{n_1}P_2^{n_2}=P_qP_{p,q}.
 \end{equation}
Since $(P_q,P_{p,q})|p$ and $p\nmid P_q$, so $(P_q,P_{p,q})=1.$
Thus
\begin{equation}\label{eq:17}
 P_{p,q}=u_1P_1^{n_1},\; P_q=u_2P_2^{n_2},\; u=u_1u_2.
 \end{equation}
 Note that
 \begin{equation}\label{eq:18}
 P_q=\sum_{r=0}^{(q-1)/2}\binom
 q{2r+1}x_1^{q-2r-1}(Dy_1^2)^r
\end{equation}
and
\begin{equation}\label{eq:19}
 P_{p,q}=\sum_{r=0}^{(p-1)/2}\binom
 p{2r+1}x_q^{p-2r-1}(Dy_q^2)^r.
\end{equation}
Using an argument similar to that in Theorem \ref{thm:3.7}, we have $u_1=1$ or $p$ and $u_2=1$ or $q$. And from
\begin{equation}\label{eq:20}
 uP_1^{n_1}P_2^{n_2}=P_pP_{q,p},
 \end{equation}
and $p\nmid P_{q,p}$, we obtain $P_p=u_1p_1^{n_1}=P_{p,q}$, which is impossible. Therefore $m_1=1$ and $m=q^r$. If $r>3$, then we have $p_1|P_{q^2},\; p_2|P_{q^2}$ by Theorem \ref{thm:3.7} and
\begin{equation}\label{eq:21}
 P_{q,q^2}=\sum_{r=0}^{(q-1)/2}\binom
 q{2r+1}x_{q^2}^{q-2r-1}(Dy_{q^2}^2)^r.
\end{equation}
As $(P_{q,q^2},P_{q^2})|q$, hence $p_1\nmid P_{q,q^2},\; p_2\nmid P_{q,q^2}$. Therefore, every prime divisor $P$ of $P_{q,q^2}$ divides $D$. Then from equation \eqref{eq:21} we get $P|qx_{q^2}^{q-1}$ so $P=q$. If $q>3$, we affirm that $P_{q,q^2}=q$. Otherwise from equation \eqref{eq:21}, we have $q^2|qx_{q^2}^{q-1}$, which is impossible. On the other hand, if $q>3$, then
$$P_{q,q^2}=\sum_{r=0}^{(q-1)/2}\binom
 q{2r+1}x_{q^2}^{q-2r-1}(Dy_{q^2}^2)^r>q.$$
 This is a contradiction. So we obtain $q=3$, thus $3|D$. Since $x_q^2-Dy_q^2=-1$, we get $-1=(-1|3)=1$. It also leads to a contradiction. Thus $r\leq 2$. This completes the proof.
\end{proof}

 \begin{theorem}\label{thm:3.9} Let $D$ be a positive nonsquare integer such that the Diophantine equation \eqref{eq:10} is solvable in odd integers $x$ and $y$. Let $(x, \, y)$ be a positive integer solution of Pell equation \eqref{eq:10} with $y=p^ny'$, where $p$ is a prime not dividing $D$ and $n\in \mathbb{N}$. If every prime divisor of $y'$ divides $D$, then $\frac{x+y\sqrt{D}}{2}=\frac{\varepsilon}{2}$ or $(\frac{\varepsilon}{2})^2$ or  $(\frac{\varepsilon}{2})^3$ except for the case $(x,y,D)=(123,55,5)$, where $\varepsilon=x_1+y_1\sqrt{D}$ is the minimal positive solution of \eqref{eq:10}.
 \end{theorem}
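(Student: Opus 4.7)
The plan is to mimic the Lehmer-sequence argument used in the proof of Theorem~\ref{thm:3.7}, adapted to the $C=4$ setting. First I parameterize the solutions. Setting $\alpha=\varepsilon/2$ and $\beta=\bar\varepsilon/2$ one has $\alpha+\beta=x_1$, $\alpha\beta=1$, $(\alpha-\beta)^2=x_1^2-4=Dy_1^2$, so every positive integer solution of~\eqref{eq:10} takes the form $(x+y\sqrt D)/2=\alpha^m$ for some $m\geq 1$, with $y=y_1P_m$ for the Lehmer sequence $\{P_m\}$ of parameters $R=x_1^2$, $Q=1$. All divisibility properties of Proposition~\ref{prop:Lehmer} are then at our disposal, and the hypothesis on $y$ translates into the statement that every prime factor of $y_1P_m$ other than $p$ divides $D$.

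The easy case is $p\mid y_1$: here the result follows from the $k=1$, $C=4$ analog of Theorem~\ref{Thm:3.4}(ii) combined with Remark~\ref{rmk:3.1}, which forces $m\in\{1,2,3\}$ whenever every prime of $y$ lies in the union of the primes of $D$ and the primes of $y_1$. So assume henceforth that $p\nmid y_1$, whence $p^n\mid P_m$ and every prime of $P_m/p^n$ divides $D$.

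Suppose now $m>3$. Writing $m=m_1q^r$ with $q$ a prime dividing $m$ and $(m_1,q)=1$, $r\geq 1$, the key first claim is $p\mid P_{q^r}$. Otherwise, $\gcd(P_{m_1},P_{q^r})=P_1=1$ by Proposition~\ref{prop:Lehmer}(\ref{item:Lehmer-3}) forces every prime of $y_{m_1}=y_1P_{m_1}$ to divide $D$, and the $C=4$ Störmer-type theorem gives $m_1\leq 3$; iterating with successive prime divisors of $m$ then pins down $m_1=1$, whence $m=q^r$. The bound $r\leq 1$ follows by inspecting $P_{q,q^{r-1}}=P_{q^r}/P_{q^{r-1}}$: its $q$-adic valuation is at most $1$ by Proposition~\ref{prop:Lehmer}(\ref{item:Lehmer-7}), while every other prime divisor divides $D$. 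A binomial-expansion argument of exactly the kind used in Theorem~\ref{thm:3.7} then forces $P_{q,q^{r-1}}=q$, impossible by size when $q\geq 5$ and $r\geq 2$; the small-prime cases $q\in\{2,3\}$ reduce to $m\in\{1,2,3\}$ directly.

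The remaining case is $m=q\geq 5$ a prime. Substituting $Dy_1^2=x_1^2-4$ into the explicit formula for $P_q$ yields $P_q=F_q(x_1)$ for some polynomial $F_q\in\mathbb{Z}[x_1]$. For $q=5$ this factors cleanly as $F_5(x_1)=x_1^4-3x_1^2+1=(x_1^2-x_1-1)(x_1^2+x_1-1)$, a product of two coprime odd integers whose total prime content must be $\{p\}\cup\{\text{primes of }D\}$. Since $D\mid(x_1-2)(x_1+2)$, a short arithmetic argument on the common divisors of $x_1^2\pm x_1-1$ and $x_1^2-4$ isolates the unique solution $(x_1,y_1,D)=(3,1,5)$, $(p,n)=(11,1)$, producing the exception $(x,y,D)=(123,55,5)$. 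For $q\geq 7$ the polynomial $F_q$ admits a richer (Chebyshev/cyclotomic-type) factorization, and the same bookkeeping eliminates all solutions. The main obstacle is precisely this final step: the case-by-case isolation of the $q=5$ exception, and the verification that the larger primes $q$ leave no room for further exceptions.
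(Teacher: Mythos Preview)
Your plan follows the paper's proof in outline: parametrize by $m$, use Lehmer/Lucas divisibility together with Theorem~\ref{Thm:3.4} to force $m=q^r$, then $r=1$, then bound $q$. There are, however, two real gaps.

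\medskip

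\textbf{The even case is not free.} You fold $q=2$ into ``small-prime cases reduce to $m\in\{1,2,3\}$ directly'', but this does not fall out of the $p\mid P_{q}$ mechanism. The paper treats $2\mid m$ separately: writing $m=2m_1$ one has $y=x_{m_1}y_{m_1}$ with $\gcd(x_{m_1},y_{m_1})\in\{1,2\}$. Since $\gcd(x_{m_1},D)\mid 2$, in the coprime case the factor $p^n$ must land entirely in $x_{m_1}$, so every prime of $y_{m_1}$ divides $D$ and Theorem~\ref{Thm:3.4}(ii) gives $m_1=1$. In the case $\gcd=2$ one gets $p=2$ and $x_{m_1}$ a $2$-power, which is eliminated via Lemma~\ref{lem:2.3} and the oddness of $x_1$. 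This argument is short but is genuinely different from the odd-$m$ analysis and cannot be skipped.

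\medskip

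\textbf{The endgame for $q\ge5$ is simpler than you expect.} You propose a ``richer Chebyshev/cyclotomic-type factorization'' for each prime $q\ge7$ and admit this is the obstacle. The paper avoids this entirely. Working with the Lucas recursion $P_s=x_1P_{s-1}-P_{s-2}$ (so $P_0=0$, $P_1=1$, $P_2=x_1$, $P_3=x_1^2-1$, \dots) one has the single identity
\[
P_{2k+1}=(P_{k+1}-P_k)(P_{k+1}+P_k),
\]
and for prime $q=2k+1\ge5$ the two factors are odd and coprime. One first rules out $P_q=p^n$ (else the smaller factor is $1$, impossible for $k\ge2$), then checks that any prime of $P_q$ other than $p$ must equal $q$ and occur to the first power, so $P_q=qp^n$ and the smaller factor equals $q$. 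But $P_{s}-P_{s-1}=(x_1-1)P_{s-1}-P_{s-2}>2(P_{s-1}-P_{s-2})$ for $s\ge3$, whence $P_{k+1}-P_k>2^{k}\ge q$ as soon as $q\ge7$. This kills all $q\ge7$ at once; for $q=5$ it leaves $P_3-P_2=x_1^2-x_1-1=5$, i.e.\ $x_1=3$, $D=5$, giving exactly the exception $(x,y,D)=(123,55,5)$. No case-by-case polynomial bookkeeping is needed.

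\medskip

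A minor point: the sentence beginning ``Otherwise, $\gcd(P_{m_1},P_{q^r})=1$ \dots forces every prime of $y_{m_1}$ to divide $D$'' has the logic reversed. The claim $p\mid P_q$ is proved by observing that otherwise every prime of $y_q$ divides $D$, contradicting Theorem~\ref{Thm:3.4}; \emph{then} coprimality of $P_{m_1}$ and $P_q$ gives $p\nmid P_{m_1}$, so $m_1=1$.
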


\begin{proof} It is easy to see that the result is true if $p|y_1$ by Remark \ref{rmk:3.1}. We assume that $p\nmid y_1$. By Lemma \ref{lem:2.1} we know that
\begin{equation}\label{eq:22}
 \frac{x+y\sqrt{D}}{2}=\left(\frac{x_1+y_1\sqrt{D}}{2}\right)^m
 \end{equation}
 for some positive integer $m$. If $m=1$, there is nothing to do. Hence we take $m>1$.\\

{\bf Case 1:} We assume $2|m$. We write $m=2m_1$. From equation \eqref{eq:22} we get
\begin{equation}\label{eq:23}
 \frac{x+y\sqrt{D}}{2}=\left(\frac{x_{m_1}+y_{m_1}\sqrt{D}}{2}\right)^2.
 \end{equation}
 Hence $$x_{m_1}y_{m_1}=p^ny'.$$ Since $x_{m_1}^2-Dy_{m_1}^2=4,$ we
 have that $(x_{m_1},y_{m_1})=1$ or $2$. If $(x_{m_1},y_{m_1})=1$,
 then $y_{m_1}=y'$. It follows that every prime divisor of $y_{m_1}$ divides $D$. By Theorem \ref{Thm:3.4}, we obtain $m_1=1$, whence $m=2.$ If $(x_{m_1},y_{m_1})=2$, then $p=2$, we have $x_{m_1}=2^{n-1}$ and $Q_{m_1}=2^t.$ By Lemma \ref{lem:2.3} and Proposition~\ref{prop:Lehmer} we have $m_1=3.$ This implies that $x_1|2^{n-1}=x_3$, which is impossible since $x_1$ is an odd $>1$.\\

 {\bf Case 2:} Now we assume $2\nmid m$. We write $m=m_1q^r$, where $q$ is a prime divisor of $m$,  $(m_1,q)=1,\; r\in \mathbb{N}.$ We claim that $p|P_q$. Otherwise every prime divisor of $y_q=y_1P_q$ divides $D$ by the assumption. It follows that $q=1$ by Theorem \ref{Thm:3.4}. This leads to a contradiction. By Proposition~\ref{prop:Lehmer}, we know that $(P_{m_1},P_q)=P_{(m_1,q)}=P_1=1.$ This implies that every prime divisor of $y_{m_1}=y_1P_{m_1}$ divides $D$ since
 $y_{m_1}|y_m=y=p^ny'.$ Thus we obtain $m_1=1$ by Theorem \ref{Thm:3.4}. Therefore we have $m=q^r$. It is obvious that $q\neq p$ since
 \begin{equation}\label{eq:24}
 P_q=\sum_{r=0}^{(q-1)/2}\binom
 q{2r+1}(x_1/2)^{q-2r-1}(Dy_1^2/4)^r.
 \end{equation}
If $r>1$, then from
 \begin{equation}\label{eq:25}
 P_{q,q}=\sum_{r=0}^{(q-1)/2}\binom
 q{2r+1}(x_q/2)^{q-2r-1}(Dy_q^2/4)^r.
\end{equation}
We will prove that $q=3$ $P_{q,q}=3^t$, and $3|D$. Since
$P_3=(3x_1^2+Dy_1^2)/4$, we get $3|P_3$ so $3|y_3$. If $t=1$, from equation \eqref{eq:25} we have $12=3x_3^2+Dy_3^2=4Dy_3^2+12$. It follows that $4Dy_3^2=0$ which is impossible. If $t>1$, again from equation \eqref{eq:25} we see that $9|3x_3^2$, which is also impossible. We have shown that $r=1$. If $q>3$, we claim that $P_q\neq p^n$. Otherwise if $q\equiv 1\pmod{4}$. We write $q=4k+1.$ From
$$P_q=P_{4k+1}=(P_{2k+1}-P_{2k})(P_{2k+1}+P_{2k}),$$
and $((P_{2k+1}-P_{2k}),(P_{2k+1}+P_{2k}))=1$, since
$$((P_{2k+1}-P_{2k}),(P_{2k+1}+P_{2k}))|2(P_{2k},P_{2k+1})=2,\; 2\nmid P_q,$$
we have $P_{2k+1}-P_{2k}=1$, which is impossible. If $q\equiv 3\pmod{4}$, we write $q=4k+3.$ From
$$P_q=P_{4k+3}=(P_{2k+2}-P_{2k+1})(P_{2k+2}+P_{2k+1}),$$
and $((P_{2k+2}-P_{2k+1}),(P_{2k+2}+P_{2k+1}))=1$ since
$$((P_{2k+2}-P_{2k+1}),(P_{2k+2}+P_{2k+1}))|2(P_{2k+2},P_{2k+1})=2,2\nmid P_q,$$
we obtain $P_{2k+2}-P_{2k+1}=1$ which is impossible. Let $P\neq p$ be an arbitrary prime divisor of $P_q$. Then from equation \eqref{eq:24}, we can easily prove that $P=q$ and
$$q^2\nmid P_q.$$
Thus we have shown that $P_q=qp^n$ if $q>3.$  On the other hand, if $q>5$, then when $q\equiv 1\pmod{4}$, we write $q=4k+1.$ We have $k>2$ and
$P_{2k+1}-P_{2k}=q$ or $P_{2k+1}+P_{2k}=q.$ But
$P_s=x_1P_{s-1}-P_{s-2},s\geq 2.$ Hence we get
$P_s-P_{s-1}=(x_1-1)P_{s-1}-P_{s-2}>2(P_{s-1}-P_{s-2}),s\geq 3$. Thus we have $P_{2k+1}-P_{2k}>2^{2k}=4^k>4k+1=q,k\geq 3.$ This leads to a contradiction. When $q\equiv 3\pmod{4}$, we write $q=4k+3.$ We have $k\geq 1$ and
$P_{2k+2}-P_{2k+1}=q$ or $P_{2k+2}+P_{2k+1}=q.$ But
$P_{2k++2}-P_{2k=1}>2^{2k+1}>4k+3=q,k\geq 1.$ This also leads to a contradiction. Therefore we have shown that $q=3$ or $q=5$. If $q=5$, by what we proved before we have $P_3-P_2=5.$ Since
$P_0=0,P_1=1,P_2=x_1,P_3=x_1^2-1,$ so we obtain $x_1(x_1-1)=6.$ Thus $x_1=3.$ From $x_1^2-Dy_1^2=4,$ we get $Dy_1^2=5$. It follows that $D=5,y_1=1.$ Therefore
$y=y_5=P_5=(P_3-P_2)(P_3+P_2)=(8-3)(8+3)=55.$ This completes the proof.
 \end{proof}

\begin{remark}\label{rmk:3.4} If $\frac{x+y\sqrt{D}}{2}=(\frac{\varepsilon}{2})^3$,
 then $p=2$ by the Proposition~\ref{prop:Lehmer}.
 \end{remark}
\begin{theorem}\label{thm:3.10} Let $(x, \, y)$
 be a positive integer solution of Pell equation \eqref{eq:10} with $y=p_1^{n_1}p_2^{n_2}y'$, where $p_1,p_2$ are primes not dividing $D$ and $n_1,n_2\in \mathbb{N}$. If every prime divisor of
  $y'$ divides $D$, then $\frac{x+y\sqrt{D}}{2}=\frac{\varepsilon}{2}$ or $(\frac{\varepsilon}{2})^2$ or
 $(\frac{\varepsilon}{2})^3$ or $(\frac{\varepsilon}{2})^4$ or $(\frac{\varepsilon}{2})^6$ or
 $(\frac{\varepsilon}{2})^q,$ where $\varepsilon=x_1+y_1\sqrt{D}$ is the minimal positive solution of equation \eqref{eq:10}, $q$ is an odd prime with $(q, p_1p_2)=1$.
 \end{theorem}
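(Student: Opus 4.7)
By Lemma~\ref{lem:2.1} write
\[
\frac{x+y\sqrt{D}}{2}=\left(\frac{x_1+y_1\sqrt{D}}{2}\right)^m
\]
for some $m\in\mathbb{N}$; the goal is to force $m\in\{1,2,3,4,6\}\cup\{q:q\text{ odd prime},\,(q,p_1p_2)=1\}$. I may assume $m>1$ and, by Remark~\ref{rmk:3.1} together with Theorem~\ref{thm:3.9}, that neither $p_1$ nor $p_2$ divides $y_1$: otherwise $y/y_1$ has at most one ``free'' prime and Theorem~\ref{thm:3.9} (or the base St\"ormer-type theorem) settles the question. The proof then splits on the parity of $m$, closely following the strategy of Theorems~\ref{thm:3.8} and~\ref{thm:3.9}.

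For $m$ odd, write $m=m_1q^r$ with $q$ an odd prime, $(m_1,q)=1$ and $r\geq 1$. Since $y_m=y_1P_m$ and $\gcd(P_{m_1},P_{q^r})=P_{(m_1,q^r)}=P_1=1$ by Proposition~\ref{prop:Lehmer}, the two free primes $p_1,p_2$ must distribute between the coprime divisors $P_{m_1}$ and $P_{q^r}$ of $y/y_1$; all other prime divisors of these factors divide $D$. Applying Theorem~\ref{thm:3.9} to whichever factor carries exactly one free prime forces the corresponding index into $\{1,2,3\}\cap(\text{odd})=\{1,3\}$, while a factor carrying no free prime has index equal to $1$ by the $k=1$ extension of Theorem~\ref{Thm:3.4}. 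The residual case $m_1=3$ is eliminated by the binomial expansion of $P_{q^r,3}$ combined with the $(-1\mid 3)=1$ obstruction already used at the end of the proofs of Theorems~\ref{thm:3.7}--\ref{thm:3.9}, and an analogous argument confines $r$ to $r=1$. This gives $m=q$, a single odd prime with $(q,p_1p_2)=1$.

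For $m$ even, write $m=2^am'$ with $m'$ odd, $a\geq 1$, and iterate the doubling identity $y_{2n}=x_ny_n$:
\[
y=y_{2^am'}=y_{m'}\,x_{m'}\,x_{2m'}\cdots x_{2^{a-1}m'}=y_1\,x_1\,P_{m'}\,Q_{m'}\,Q_{2m'}\cdots Q_{2^{a-1}m'}.
\]
By Proposition~\ref{prop:Lehmer} the odd parts of the $a+1$ factors $P_{m'},Q_{m'},Q_{2m'},\ldots,Q_{2^{a-1}m'}$ are pairwise coprime. Since all odd prime divisors of $y$ apart from $p_1,p_2$ divide $D$, at most two of these $a+1$ factors can contain a free prime, and the remaining ``$D$-only'' factors have indices in $\{1,2,3\}$ by Theorem~\ref{thm:3.9}. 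Pushing these constraints through the power-of-two structure yields $a\leq 2$ and $m'\in\{1,3\}$, leaving $m\in\{2,4,6,12\}$; the borderline $m=12$ is excluded because the three coprime odd factors $P_3$, $Q_3$, $Q_6$ would have to absorb three distinct external primes, exceeding the two allowed by hypothesis.

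The main obstacle will be Case B: establishing the pairwise coprimality of the $a+1$ factors, keeping careful bookkeeping of which $p_i$ is absorbed by which Lehmer factor, and eliminating the subtle borderline possibilities $a\geq 3$, $m'\geq 5$ and $m=12$. In Case A the delicate point is ruling out $r\geq 2$ when $m_1=1$, which is tighter than the bound $r\leq 2$ of Theorem~\ref{thm:3.8} for equation~\eqref{eq:9}; this should follow from a slightly sharper quadratic-residue obstruction available for equation~\eqref{eq:10}.
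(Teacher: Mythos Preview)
Your split into odd and even $m$ matches the paper, but both halves diverge from its actual argument, and in the odd case you lean on a tool that is simply unavailable for equation~\eqref{eq:10}.

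\textbf{Even $m$.} The paper halves only once. Writing $m=2m_1$ gives $y=x_{m_1}y_{m_1}$; in the coprime subcase, $x_{m_1}^2-Dy_{m_1}^2=4$ forces $x_{m_1}$ to share no odd prime with $D$, so $x_{m_1}$ is built solely from $p_1,p_2$. Since $x_{m_1}>1$, at least one $p_i$ divides $x_{m_1}$ and hence does \emph{not} divide $y_{m_1}$. Thus $y_{m_1}$ carries at most one free prime, and Theorem~\ref{thm:3.9} gives $m_1\in\{1,2,3\}$ immediately. This single observation replaces your iterated doubling, the bookkeeping over $a+1$ coprime factors, and the separate exclusion of $m=12$. (Your claim that the ``$D$-only'' factors among $P_{m'},Q_{m'},\dots$ have indices in $\{1,2,3\}$ by Theorem~\ref{thm:3.9} is also off: those are Lehmer factors, not $y$-coordinates of solutions, and $Q_{2^im'}=x_{2^im'}$ is coprime to the odd part of $D$, so a ``$D$-only'' such factor would have to be a power of~$2$, not merely have small index.)

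\textbf{Odd $m$.} Two genuine gaps. First, the ``$(-1\mid 3)$ obstruction'' you invoke to eliminate $m_1=3$ belongs to the negative Pell equation (Theorems~\ref{thm:3.7}--\ref{thm:3.8}); for $x^2-Dy^2=4$ the congruence $x^2\equiv 4\pmod 3$ is never obstructed, so that step fails outright. The paper handles $m_1>1$ differently: Theorem~\ref{thm:3.9} allows, besides odd indices $1$ and $3$, the exceptional index $5$ with $D=5$; coprimality of $m_1$ and $q^r$ then forces $m=15$, $D=5$, and one checks $y_{15}=2^3\cdot 5\cdot 11\cdot 31\cdot 61$ has four primes off $D$, too many for the two-prime hypothesis. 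You omit this exceptional branch entirely. Second, the ``sharper quadratic-residue obstruction for equation~\eqref{eq:10}'' you hope will exclude $r\ge 2$ does not exist. The paper's argument (once $m_1=1$) is a size/divisibility one: one reduces to $q=3$ with $P_{3,3}=3^t$ and $3\mid D$, and then the identity $4P_{3,3}=3x_3^2+Dy_3^2$ together with $3\mid y_3$ gives a contradiction both for $t=1$ (forces $Dy_3^2=0$) and for $t>1$ (forces $9\mid 3x_3^2$). No Legendre symbol enters.
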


\begin{proof} It is enough to prove the result for the case of $p_1\nmid y_1,\; p_2\nmid y_1$ according to Remark \ref{rmk:3.1}. By Lemma \ref{lem:2.1}, we know that
\begin{equation}\label{eq:26}
 \frac{x+y\sqrt{D}}{2}=\left(\frac{x_1+y_1\sqrt{D}}{2}\right)^m
 \end{equation}
 for some positive integer $m$. If $m=1$, there is nothing to do. Therefore, we take $m>1$.\\

{\bf Case 1:} We assume $2|m$. Then we write $m=2m_1$. From equation \eqref{eq:26} we get
\begin{equation}\label{eq:27}
 x_{m_1}y_{m_1}=p_1^{n_1}p_2^{n_2}y'.
 \end{equation}
 If $(x_{m_1},y_{m_1})=1$,  then $y_{m_1}=p_1^{n_1}y'$  or $y_{m_1}=p_2^{n_2}y'$. It follows that $y_{m_1}$ satisfies the condition of Theorem \ref{thm:3.9}, so we obtain $m_1=1$ or $2$ or $3$, whence $m=2$ or $4$ or $6$. If $(x_{m_1},y_{m_1})=2$, then we have $x_{m_1}=2^{n-1}$ and so $Q_{m_1}=2^t.$ By Lemma \ref{lem:2.3} and Proposition~\ref{prop:Lehmer} we obtain $m_1=3.$ This implies that $x_1|2^{n-1}=x_3$, which is impossible since $x_1$ is an odd $>1$.\\

 {\bf Case 2:} Now we assume $2\nmid m$. We put $m=m_1q^r$, where $q$ is a prime divisor of $m$ and $(m_1,q)=1,\; r\in \mathbb{N}.$ If $m_1>1$, by Theorem \ref{thm:3.9} we know that $m=15,\, D=5$. But a simple calculation shows that $y=y_{15}=2^3\cdot5\cdot11\cdot31\cdot61$.
 Thus we have now shown that $m_1=1.$
If $r>1$, then from
 \begin{equation}\label{eq:28}
 P_{q,q}=\sum_{r=0}^{(q-1)/2}\binom
 q{2r+1}(x_q/2)^{q-2r-1}(Dy_q^2/4)^r.
\end{equation}
We will prove $q=3$, $P_{q,q}=3^t,\; 3|D$. Since
$P_3=(3x_1^2+Dy_1^2)/4$, we get $3|P_3$ and $3|y_3$. If $t=1$, from equation \eqref{eq:28} we have $12=3x_3^2+Dy_3^2=4Dy_3^2+12$. It follows that $4Dy_3^2=0$ which is impossible. If $t>1$, again from \eqref{eq:28} we see that $9|3x_3^2$. This also is impossible and completes the proof of Theorem \ref{thm:3.10}.
 \end{proof}

 \begin{theorem}\label{thm:3.11} Suppose that the Diophantine equation \eqref{eq:11} is solvable
 in odd integers $x$ and $y$. Let $(x, \, y)$
 be a positive integer solution of equation \eqref{eq:11}.\\
  (i)\, If $x=p^nx'$, where $p$ is a prime not dividing $k$ and $n\in \mathbb{N}$ and every prime divisor of $x'$ divides $k$, then $\frac{x\sqrt{k}+y\sqrt{l}}{2}=\frac{\varepsilon}{2}$ or $(\frac{\varepsilon}{2})^q$ except for
  $(k,l,x,y)=(5,1,75025,167761)$, where $\varepsilon=x_1\sqrt{k}+y_1\sqrt{l}$ is the minimal positive solution of \eqref{eq:11}, $q$ an odd prime with $q\neq p.$\\
    (ii)\,If $y=p^ny'$, where $p$ is a prime not dividing $l$ and $n\in \mathbb{N}$ and every prime divisor of $y'$ divides $l$, then $\frac{x\sqrt{k}+y\sqrt{l}}{2}=\frac{\varepsilon}{2}$ or $(\frac{\varepsilon}{2})^q$, where $q$ an odd prime with $q\neq p.$
 \end{theorem}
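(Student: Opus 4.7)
My strategy parallels the proofs of Theorems~\ref{thm:3.7} and~\ref{thm:3.9}, replacing the $P$-sequence by the $Q$-sequence (the hypothesis here being on $x$ rather than $y$), and carefully tracking the unique exception $(k,l,x,y)=(5,1,5,11)$ of Theorem~\ref{Thm:3.4}(i) through the argument; this exception is precisely what gives rise to the $(5,1,75025,167761)$ case in part~(i). I sketch (i); part~(ii) is parallel and, since Theorem~\ref{Thm:3.4}(ii) has no exception, yields none either.

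\textbf{Setup.} By Remark~\ref{rmk:3.1} I may assume $p\nmid x_1$. Lemma~\ref{lem:2.2} gives $\frac{x\sqrt{k}+y\sqrt{l}}{2}=\bigl(\frac{\varepsilon}{2}\bigr)^m$ for some odd $m\ge 1$, and I take $m>1$. With $\alpha,\beta=\frac{x_1\sqrt{k}\pm y_1\sqrt{l}}{2}$ one has Lehmer parameters $(R,Q)=(kx_1^2,1)$, $x_m=x_1Q_m$, and hence $p\mid Q_m$. The Fermat-type congruence $(\alpha+\beta)^p\equiv\alpha^p+\beta^p\pmod p$ gives $Q_p\equiv R^{(p-1)/2}\not\equiv 0\pmod p$ (using $p\nmid R=kx_1^2$); hence if $q$ denotes an odd prime divisor of $m$ with $p\mid Q_q$, then $q\ne p$. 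Write $m=m_1q^r$ with $(m_1,q)=1$.

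\textbf{Reducing to $m=q$.} The key claim is: for any prime $q\mid m$, either $p\mid Q_q$, or $(k,l,x_q,y_q)=(5,1,5,11)$. Indeed, the hypothesis forces every prime of $Q_m=x/x_1$ to lie in $\{p\}\cup\{\text{primes of }k\}$; since $Q_q\mid Q_m$ by Proposition~\ref{prop:Lehmer}(\ref{item:Lehmer-2}), the same holds of $Q_q$, and if $p\nmid Q_q$ then every prime of $x_q=x_1Q_q$ divides $k$ and Theorem~\ref{Thm:3.4}(i) gives the stated dichotomy. Outside the exception, pick such $q$; coprimality $\gcd(Q_{m_1},Q_q)=Q_1=1$ from Proposition~\ref{prop:Lehmer}(\ref{item:Lehmer-4}) gives $p\nmid Q_{m_1}$, so every prime of $x_{m_1}$ divides $k$, and Theorem~\ref{Thm:3.4}(i) forces $m_1=1$ (again modulo the exception). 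To rule out $r\ge 2$, consider $Q_{q,q}:=Q_{q^2}/Q_q$; the Lehmer substitution $(\alpha,\beta)\mapsto(\alpha^q,\beta^q)$ gives $Q_{q,q}=Q_q(kx_q^2,1)$, whose binomial expansion yields $Q_{q,q}\equiv(-1)^{(q-1)/2}q\pmod k$. Since $\gcd(Q_q,Q_{q,q})\mid q$, $p\mid Q_q$, and $p\ne q$, we have $p\nmid Q_{q,q}$; combined with the congruence, every prime of $Q_{q,q}$ equals $q$, so $Q_{q,q}=q^t$ and $q\mid k$. A $v_q$-count on the summands of $Q_{q,q}$ forces $v_q(Q_{q,q})=1$ for $q\ge 5$, so $t=1$, and then the size bounds $x_q\ge 2$ and $k\ge q$ make $Q_{q,q}=q$ impossible; for $q=3$, direct analysis of $kx_3^2=3+3^t$ with $x_3=x_1(kx_1^2-3)$ rules out integer solutions. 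Hence $r=1$.

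\textbf{Exceptional case.} The sole residual configuration is $(k,l,x_1,y_1)=(5,1,1,1)$, i.e., the Fibonacci case, where $\alpha=(1+\sqrt 5)/2$, $x_m=F_m$, and $y_m=L_m$. Direct computation gives $F_5=5$ (failing the hypothesis $n\ge 1$, which requires a prime outside $\{5\}$), $F_{25}=5^2\cdot 3001$ with $L_{25}=167761$ (producing the stated exception with $p=3001$), and for any other $m$ of the form $5^r$ ($r\ge 3$) or $5q^r$ ($q\ne 5$), a primitive-prime-divisor result on $(F_n)$ (Carmichael; Bilu--Hanrot--Voutier) guarantees at least one additional prime factor of $x_m$ outside $\{5,p\}$, violating the hypothesis. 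The hardest step is the last one: rigorously ruling out further exceptional $m$ in the Fibonacci branch via primitive-divisor theory.
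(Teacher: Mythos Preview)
Your main reduction mirrors the paper's: Lemma~\ref{lem:2.2} gives the exponent $m$, you factor $m=m_1q^r$, use Theorem~\ref{Thm:3.4}(i) (tracking its lone exception) to force $m_1=1$, and a $Q_{q,q}$-analysis to force $r=1$. The genuine divergence is in the exceptional Fibonacci branch $(k,l)=(5,1)$. The paper keeps this elementary and self-contained: it computes $x_{25}=5^2\cdot 3001$ directly to isolate the exception $m=25$; and when the exception instead appears at the $m_1$-step (so $m_1=5$, $m=5q$ with $q\ne 5$), it shows from the explicit expansion of $Q_{5,q}$ that every prime of $Q_{5,q}$ is $5$ while $v_5(Q_{5,q})=1$, whence $Q_{5,q}=5$, which is impossible since $Q_{5,q}>Q_5=5$ as soon as $x_q>x_1$. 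Your appeal to Carmichael/Bilu--Hanrot--Voutier is correct and conceptually tidy (a primitive prime of $F_{5q}$ or $F_{5^r}$ with $r\ge 3$ supplies a third prime factor), but it imports heavier external machinery where the paper gets by with a two-line computation on $Q_{5,q}$.

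Two small cleanups: first, your own reduction already forces $r=1$ whenever $q\ne 5$, so the residual Fibonacci cases are only $m=5q$ and $m=5^r$ ($r\ge 2$), not the more general $5q^r$ you list; second, your congruence for $Q_{q,q}$ is really modulo $kx_q^2$ rather than modulo $k$ (since every nonconstant term in the expansion carries a factor $kx_q^2$), and it is this stronger modulus that makes the size contradiction for $q\ge 5$ immediate once $Q_{q,q}=q$.
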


\begin{proof} (i) It is easy to see that the result is true if $p|x_1$ by Remark \ref{rmk:3.1}. Now we assume that $p\nmid x_1$. By Lemma \ref{lem:2.2}, we know that
\begin{equation}\label{eq:29}
 \frac{x\sqrt{k}+y\sqrt{l}}{2}= \left(\frac{x_1\sqrt{k}+y_1\sqrt{l}}{2}\right)^m,
 \end{equation}
 for some odd positive integer $m$. If $m=1$, there is nothing to do. Hence we assume $m>1$. We write $m=m_1q^r$, where $q$ is a prime divisor of $m$ and $(m_1,q)=1, r\in \mathbb{N}.$ \\

 {\bf Case 1:} We assume that $p|Q_{q^r}.$  If $p\nmid Q_q$, then every prime divisor of $x_q=x_1Q_q$ divides $k$ by the assumption. It follows that $q=5,k=5,l=1$ by Theorem \ref{Thm:3.4}. But a simple calculation gives $x_{25}=75025=5^2\cdot3001$ which implies that $r=2,m_1=1.$ Therefore we have $m=25.$ If $p|Q_q$, we are now in the position to prove that $r=1$. It is obvious that $q\neq p$ since
 \begin{equation}\label{eq:30}
 Q_q=\sum_{r=0}^{(q-1)/2}\binom
 q{2r}(kx_1^2/4)^{(q-2r-1)/2}(ly_1^2/4)^r.
 \end{equation}
If $r>1$, then from
 \begin{equation}\label{eq:31}
 Q_{q,q}=\sum_{r=0}^{(q-1)/2}\binom
 q{2r}(kx_q^2/4)^{(q-2r-1)/2}(ly_q^2/4)^r.
\end{equation}
We will prove that $q=3$, $Q_{q,q}=3^t,\; 3|k$. Since
$Q_3=(kx_1^2+3ly_1^2)/4$, hence we get $3|Q_3$ and $3|x_3$. If
$t=1$, from equation \eqref{eq:31} we obtain $12=kx_3^2+3ly_3^2=4ly_3^2+4$. It follows that $ly_3^2=2$ which is impossible. If $t>1$, again from equation \eqref{eq:31} we see that $9|3ly_3^2$. This is also impossible. We have shown that $r=1$. If $m_1>1$ and since $(Q_{m_1}, Q_q)=1$, we get $m_1=5, k=5, l=1$ by Theorem \ref{Thm:3.4}. From $$ Q_{5,q}=\sum_{r=0}^{2}\binom
 5{2r}(5x_q^2/4)^{(5-2r-1)/2}(y_q^2/4)^r$$ and as every prime divisor of $Q_{5,q}$ divides $5$, we obtain $Q_{5,q}=5=Q_5=y_5$. This leads to a contradiction. Thus $m_1=1$, therefore $m=q$ as desired.\\

{\bf Case 2:} Now we assume that $p\nmid Q_{q^r}.$ By Theorem \ref{Thm:3.4} we get
$k=5, l=1, q=5,r=1$. Write $m_1=m_2P^u,$ where $P$ is a prime divisor of $m_1$ and $(m_2,P)=1, u\in \mathbb{N}.$ If $P\nmid Q_{P^u}$, again by Theorem \ref{Thm:3.4} we get $P=5$, contradicting the fact that $(m_1,5)=1$. Hence we have $p|Q_{P^u}$. By what we proved before we know that $m_1=P$, and so $m=5P$. Again it is also  impossible. This completes the proof of (i).\\

The proof of (ii) can be done exactly by the same way. The proof is complete.
 \end{proof}

\begin{theorem}\label{thm:3.12} Let the assumption be as in Theorem \ref{thm:3.11}.\\
(i) If $x=p_1^{n_1}p_2^{n_2}x'$, where $p_1,p_2$ are primes not
dividing $k$, $n_1,n_2\in \mathbb{N}$, and every prime divisor of $x'$ divides $k$, then $\frac{x\sqrt{k}+y\sqrt{l}}{2}=\frac{\varepsilon}{2}$ or $(\frac{\varepsilon}{2})^q$ or
 $(\frac{\varepsilon}{2})^{q^2}$ or $(\frac{\varepsilon}{2})^{5q}$. The last case occurs only if $k=5,\; l=1,$ except for the case $(k,l,x)=(5,1,5^3\cdot3001\cdot158414167964045700001)$. Here $\varepsilon=x_1\sqrt{k}+y_1\sqrt{l}$ is the minimal positive solution of equation \eqref{eq:11}, $q$ an odd prime with $(q,p_1p_2)=1$.\\
 (ii) If $y=p_1^{n_1}p_2^{n_2}y'$, where $p_1,p_2$ are primes not dividing $D$ and $n_1,n_2\in \mathbb{N}$, and every prime divisor of $y'$ divides $l$, then $\frac{x\sqrt{k}+y\sqrt{l}}{2}=\frac{\varepsilon}{2}$ or $(\frac{\varepsilon}{2})^q$ or $(\frac{\varepsilon}{2})^{q^2},$ where $q$ is an odd prime with $(q, p_1p_2)=1$.
 \end{theorem}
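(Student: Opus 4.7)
The plan is to parallel the proofs of Theorems \ref{thm:3.8} and \ref{thm:3.10}, which handled the analogous ``two extra primes'' situation for equations \eqref{eq:9} and \eqref{eq:10}. By Remark \ref{rmk:3.1} we may assume $p_1, p_2 \nmid x_1$, so that all prime divisors of $x_1$ lie in those of $k$. Lemma \ref{lem:2.2} gives $(x\sqrt k + y\sqrt l)/2 = (\varepsilon/2)^m$ for some odd integer $m \geq 1$; setting $\alpha = \varepsilon/2$ and $\beta = (x_1\sqrt k - y_1\sqrt l)/2$, the associated Lehmer parameters are $R = kx_1^2$, $Q = \alpha\beta = 1$, and $x_m = x_1 Q_m$. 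It suffices to treat $m > 1$. We write $m = m_1 q^r$ with $q$ a prime divisor of $m$, $(m_1, q) = 1$ and $r \geq 1$; since $m_1$ and $q^r$ are both odd, Proposition \ref{prop:Lehmer}(\ref{item:Lehmer-4}) gives $\gcd(Q_{m_1}, Q_{q^r}) = Q_1 = 1$, and both factors divide $Q_m = x/x_1$.

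The prime divisors of $Q_m$ outside $k$ are exactly $\{p_1, p_2\}$, and we split into three cases according to how these distribute between the coprime factors $Q_{m_1}$ and $Q_{q^r}$. If both lie in $Q_{m_1}$, then every prime divisor of $x_{q^r}$ divides $k$, and Theorem \ref{Thm:3.4}(i) forces $q^r = 1$ (excluded) or $(k,l,q,r) = (5,1,5,1)$, contributing $m = 5 m_1$. If both lie in $Q_{q^r}$, the symmetric application of Theorem \ref{Thm:3.4}(i) to $x_{m_1}$ gives $m_1 = 1$ or $(k,l,m_1) = (5,1,5)$. If one prime lies in each factor, then $x_{m_1}$ has exactly one extra prime, and Theorem \ref{thm:3.11}(i) yields $m_1 \in \{1,\, q',\, 25\}$, with $m_1 = 25$ possible only for $(k,l) = (5,1)$. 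The sub-case $m_1 = q'$ a prime is ruled out just as in the proof of Theorem \ref{thm:3.8}: decomposing $x_m$ through $Q_{q'} Q_{q^r,\, q'}$ and through $Q_{q^r} Q_{q',\, q^r}$, the compatibility $(Q_{q'}, Q_{q^r,\, q'}) \mid q'$ together with the accounting of $p_1, p_2$ across these factors leads to the impossible equality $Q_{q'} = Q_{q^r,\, q'}$.

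It remains to bound $r$ when $m_1 \in \{1, 5\}$ (with $(k,l) = (5,1)$ in the latter case). Following the template of Theorems \ref{thm:3.9}, \ref{thm:3.10}, and \ref{thm:3.11}, we use
$$Q_{q,\, q^r} \;=\; \sum_{s=0}^{(q-1)/2} \binom{q}{2s}\!\left(\frac{k x_{q^{r-1}}^2}{4}\right)^{\!(q-2s-1)/2}\!\left(\frac{l y_{q^{r-1}}^2}{4}\right)^{\!s}$$
together with $(Q_{q^{r-1}},\, Q_{q, q^r}) \mid q$ (Proposition \ref{prop:Lehmer}(\ref{item:Lehmer-7})) to conclude that every prime divisor of $Q_{q, q^r}$ other than $q$ lies in $k$. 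For $q > 3$ the right-hand side exceeds $q$, so the argument pins $q = 3$, whence $3 \mid k$; reducing $kx^2 - ly^2 = 4$ modulo $3$ then yields a Legendre-symbol contradiction. This forces $r \leq 2$, giving $m \in \{1, q, q^2\}$ when $m_1 = 1$, and $m = 5q$ when $m_1 = 5$ with $(k,l) = (5,1)$. The single configuration in which the $r \leq 2$ bound breaks is $(k,l,q,r) = (5,1,5,3)$: a direct Lehmer-sequence calculation for $\varepsilon = \sqrt 5 + 1$ produces $x_{125} = 5^3 \cdot 3001 \cdot 158414167964045700001$, the announced exceptional solution.

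Part (ii) is entirely analogous, working with $y_m = y_1 P_m$ and invoking Theorems \ref{Thm:3.4}(ii) and \ref{thm:3.11}(ii); since Theorem \ref{Thm:3.4}(ii) admits no exception, the $5q$ shape does not arise and no further exceptional solution appears. The main obstacle will be the bound $r \leq 2$: one must track simultaneously the Lehmer factorization of $Q_{q^r}$, the $q$-adic divisibility in $Q_{q, q^r}/Q_{q^{r-1}}$, and the quadratic-residue obstruction modulo $3$, precisely enough to isolate $(k,l,q) = (5,1,5)$ as the unique triple where $r = 3$ actually occurs and to verify by direct computation that it produces exactly the exceptional $x$ recorded in the statement.
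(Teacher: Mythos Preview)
Your outline mirrors the paper's three-way case split on where $p_1,p_2$ land among $Q_{m_1}$ and $Q_{q^r}$, but it is missing the paper's central device and leaves a real gap. In your first case (both $p_i\nmid Q_{q^r}$) you correctly reach $(k,l,q,r)=(5,1,5,1)$ and $m=5m_1$, but then you stop; nothing in your sketch bounds $m_1$. The paper closes this by passing to the \emph{shifted} Pell equation $kx_P^{2}\,X^2-ly_P^{2}\,Y^2=4$, whose minimal solution is $(1,1)$, and observing that $(Q_{5m_2,P},P_{5m_2,P})$ is a solution with at most one extra prime; Theorem~\ref{thm:3.11} applied to this new equation forces $5m_2$ to be $1$, a prime, or the Fibonacci exception, hence $m_2=1$ and $m=5P$. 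This shifted-equation trick is invoked again in the paper's Cases~2 and~3 (with base $q$ instead of $P$) and is precisely what reduces the two-prime problem back to the one-prime Theorem~\ref{thm:3.11}; your proposal never uses it, and the appeal to ``just as in the proof of Theorem~\ref{thm:3.8}'' does not substitute for it, since the $P$-identities there do not transfer directly to the $Q$-side.

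There is also a concrete error in your $r\le2$ step. You assert that $kx^2-ly^2=4$ with $3\mid k$ gives a Legendre-symbol contradiction, but it does not: modulo~$3$ one only gets $ly^2\equiv2$, which is satisfied whenever $l\equiv2\pmod3$. The analogous Legendre trick works for $x^2-Dy^2=-1$ (Theorems~\ref{thm:3.7}--\ref{thm:3.8}) because $-1$ is a non-residue mod~$3$, but not here. The paper instead argues directly from $4Q_{3,3}=kx_3^2+3ly_3^2=4ly_3^2+4$ to show $Q_{3,3}$ cannot be a nontrivial power of~$3$. Finally, your claim that ``every prime divisor of $Q_{q,q^{r-1}}$ other than $q$ lies in $k$'' presupposes that both $p_i$ already sit in $Q_{q^{r-1}}$, which you have not established; the paper handles this by a further split on whether $p_i\mid Q_q$, and it is exactly this finer split that isolates the exceptional $m=5^3$ in Case~3.
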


\begin{proof} (i) It is enough to prove the result for the case of
$p_1\nmid x_1,\; p_2\nmid x_1$ by Remark \ref{rmk:3.1}. Using Lemma \ref{lem:2.2}, we know that
\begin{equation}\label{eq:32}
 \frac{x\sqrt{k}+y\sqrt{l}}{2}= \left(\frac{x_1\sqrt{k}+y_1\sqrt{l}}{2}\right)^m,
 \end{equation}
 for some odd positive integer $m$. If $m=1$, the result is immediate. Hence we assume $m>1$. It is not difficult to prove that $(m, p_1p_2)=1$. We write $m=m_1q^r$, where $q$ is a prime divisor of $m$, $(m_1,q)=1, r\in \mathbb{N}.$ For the remaining of the proof we consider three cases.\\

 {\bf Case 1:} Assume that $p_1\nmid Q_{q^r}$ and $p_2\nmid Q_{q^r}$. Then every prime divisor of $x_{q^r}=x_1Q_{q^r}$ divides
 $k$ by the assumption. It follows that $q=5,r=1,k=5,l=1$ by Theorem \ref{Thm:3.4}. Let $P$ be a prime divisor of $m_1$, then by
 Theorem \ref{Thm:3.4} we know that it can be at least one of $p_1$ and $p_2$. Without loss of generality, we take $p_1$ such that $p_1|Q_P.$ Since $(Q_P,Q_{5m_2,P})|5m_2$, where $m_1=Pm_2$, then from
 $$p_1^{n_1}p_2^{n_2}x'=x_m=x_PQ_{5m_2,P}.$$
 We see that $Q_{5m_2,P}=p_2^nx''$, where $n=0$ or $n=n_2,x''|x'$. But $(Q_{5m_2,P},P_{5m_2,P})$ is a solution of the Diophantine equation
 $$k_1x^2-l_1y^2=4,k_1=kx_P^2,l_1=ly_P^2.$$
 By Theorem \ref{thm:3.11}, we get $m_2=1$ so $m=5P$ as desired.\\

 {\bf Case 2:} There exists one and only one of $p_1$ and $p_2$ dividing $Q_{q^r}$. Without loss of generality, we take $p_1$ such that $p_1|Q_{q^r}.$ Then $m_1>1$. If $p_1\nmid Q_q$, then $q=5,r=2,k=5,l=1,p_1=3001$ by Theorem \ref{Thm:3.4}. Since $(Q_{m_1},Q_{25})=Q_{(m_1,25}=Q_1=1$, then $p_2|x_{m_1}$ and $p_1\nmid x_{m_1}$. By Theorem \ref{thm:3.11}, we know that $m_1=P$ is an odd prime. So
 $$Q_{25P,P}=p_1^{n_1}x'',x''|x',$$
 and $(Q_{25P,P},P_{25P,P})$ is a solution of Diophantine equation
of
$$k_1x^2-l_1y^2=4,\,k_1=5x_P^2,\,l_1=y_P^2,$$
 contradicting Theorem \ref{thm:3.11}. If $p_1|Q_q$, by what we proved before we know that
 $$Q_{m_2q,q}=p_2^{n_2}x'',x''|x',m=m_2q.$$
But $(Q_{m_2q,q},P_{m_2q,q})$ is a solution of the Diophantine equation
$$k_1x^2-l_1y^2=4,\,k_1=kx_q^2,\,l_1=ly_q^2.$$
 Hence by Theorem \ref{thm:3.11}, we know that $m_2=P$ is an odd prime. We claim that $P=q.$ Otherwise from
$$p_1^{n_1}p_2^{n_2}x'=x_PQ_{Pq,P}$$
and
$$p_1^{n_1}p_2^{n_2}x'=x_qQ_{Pq,q},$$
we see that $Q_{Pq,P}=Q_q$, which is impossible.\\

{\bf Case 3:} Assume that $p_1|Q_{q^r}$ and $p_2|Q_{q^r}$. If $p_1\nmid Q_{q}$ and $p_2\nmid Q_{q}$, then every prime divisor of $x_{q}=x_1Q_{q}$ divides $k$ by the assumption. It follows that $q=5,k=5,l=1$ by Theorem \ref{Thm:3.4}.
 A simple calculation gives $x_{25}=75025=5^2\cdot3001, x_{5^3}=5^3\cdot3001\cdot158414167964045700001$.
 It follows that $r=3,m_1=1.$ Therefore we get $m=125=5^3.$ If one of $p_1$ and $p_2$ divides $Q_q$, without loss of generality we may assume that $p_1|Q_q$, then
$$Q_{m_2q,q}=p_2^nx'',x''|x',n=0 \,or \, n_2.$$
But $(Q_{m_2q,q},P_{m_2q,q})$ is a solution of the Diophantine equation
$$k_1x^2-l_1y^2=4,\,k_1=kx_q^2,\,l_1=ly_q^2.$$
Note that $k_1>5.$ Again by Theorem \ref{thm:3.11}, we see that $m_2=P$ is an odd prime. Again by what we proved, we have $P=q.$ This completes the proof of (i).\\

The proof of (ii) can be done exactly by the same method. The proof is complete.
\end{proof}

\section{ Applications}\label{sec:4}

\subsection{The Diophantine equation $kx^2-ly^2=1$} \label{subsec:4.1}

 First we consider the solvability of the Diophantine equation
$kx^2-ly^2=1$, where $kl=D$ is a given positive nonsquare integer.\\

It is well known that Diophantine equation \eqref{eq:8} has positive integer solutions $(x, y)$, for any positive nonsquare integer $D$. But the solvability of Diophantine equations \eqref{eq:2}, \eqref{eq:4}, \eqref{eq:5}, \eqref{eq:9} is not sure. For example, the equation $3x^2-2y^2=1$ is solvable in positive integers $x$, $y$ and the equation $2x^2-3y^2=1$ has no positive integer solutions. O. Perron \cite{perr} proved the following result.
\begin{theorem}{\rm (\cite{perr})}\label{thm:4.1} Let $D\neq 2$ be a positive nonsquare integer. Then there is at least one of the following three Diophantine equations
$$x^2-Dy^2=-1,\,x^2-Dy^2=2,\,x^2-Dy^2=-2$$
which has integer solutions.
\end{theorem}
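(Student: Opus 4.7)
The plan is to work with the fundamental solution $\varepsilon = x_1 + y_1\sqrt{D}$ of the Pell equation $x^2 - Dy^2 = 1$ (which exists because $D$ is a positive nonsquare integer) and to extract an element of small norm by factoring the identity $(x_1 - 1)(x_1 + 1) = Dy_1^2$. Set $d = \gcd(x_1 - 1, x_1 + 1)$, which is $1$ if $x_1$ is even and $2$ if $x_1$ is odd. Then the two integers $(x_1 - 1)/d$ and $(x_1 + 1)/d$ are coprime with product $Dy_1^2/d^2$, so by unique factorization in $\mathbb{Z}$ I can write $x_1 - 1 = d \cdot a u^2$ and $x_1 + 1 = d \cdot b v^2$ with $\gcd(a,b) = 1$, $a$ and $b$ squarefree, and $ab$ equal to the squarefree part of $D$. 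A short preliminary analysis on the parities of $x_1$, $y_1$, and $D$ is needed to track how any extra powers of $2$ in $Dy_1^2$ distribute between the two factors.

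Subtracting the two relations gives the key identity $b v^2 - a u^2 = 2/d \in \{1, 2\}$, and I would conclude by cases on $a$ and $b$. If $a = 1$, then $u^2 - Dv^2 = -(2/d) \in \{-1, -2\}$, which is directly a solution of $x^2 - Dy^2 = -1$ or $x^2 - Dy^2 = -2$. If $b = 1$ and $d = 1$, then $v^2 - Du^2 = 2$, yielding a solution of $x^2 - Dy^2 = 2$. If $b = 1$ and $d = 2$, then $v^2 - Du^2 = 1$ with $v = \sqrt{(x_1+1)/2} < x_1$, which contradicts the minimality of the fundamental solution. Thus the conclusion holds in every sub-case except when $a, b > 1$.

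The main obstacle is precisely this residual case $a, b > 1$ with $ab = D$ and $\gcd(a,b) = 1$: here $b v^2 - a u^2 \in \{1, 2\}$ is only a generalized Pell relation and does not visibly give one of the three target equations. To finish, I would appeal to the continued fraction expansion of $\sqrt{D}$, whose period is palindromic. By analyzing the middle partial quotient $Q_m$ of the standard continued fraction algorithm, one obtains a convergent $p_m/q_m$ satisfying $p_m^2 - Dq_m^2 = \pm Q_m$ with $Q_m \in \{1, 2\}$ for every $D$ other than $D = 2$. This is the crux of Perron's original argument, and the hypothesis $D \neq 2$ is exactly what rules out the degenerate middle value and ensures one of the three target equations is solvable.
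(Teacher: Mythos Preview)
The paper does not supply its own proof of Theorem~\ref{thm:4.1}; it only cites Perron and then proves the different (though related) Theorem~\ref{thm:4.2}. So there is no in-paper argument to compare against directly. Your opening move---factoring $(x_1-1)(x_1+1)=Dy_1^2$ from the fundamental solution---is precisely the device the paper uses in its proof of Theorem~\ref{thm:4.2}, so the two approaches begin the same way.

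Your plan, however, has real gaps. First, a slip: with $ab$ equal to the \emph{squarefree part} of $D$, the case $a=1$ yields $u^2 - b v^2 = -(2/d)$, not $u^2 - D v^2 = -(2/d)$; these agree only when $D$ is itself squarefree. Second, and more seriously, the statement as printed is false whenever $4\mid D$: for $D=8$ (indeed for every $D\equiv 0\pmod 4$) each of $x^2-Dy^2=-1,\,2,\,-2$ is already impossible modulo~$4$. So some hypothesis such as $4\nmid D$ (or $D$ squarefree) is implicit in Perron's result, and your write-up must isolate and use it; the exclusion $D\neq 2$ is not the relevant constraint here. Third, the ``residual case'' $a,b>1$ is not a loose end but essentially the whole theorem (it is exactly the situation Theorem~\ref{thm:4.2} treats, and there the conclusion is \emph{different} from Perron's), and your appeal to the middle partial quotient of the continued fraction of $\sqrt{D}$ is an invocation rather than an argument: to make it a proof you must actually carry out the palindrome analysis of the $P_k,Q_k$, show that under the correct hypothesis on $D$ the midpoint value satisfies $Q_m\in\{1,2\}$, and then read off the solution from $p_{m-1}^2-Dq_{m-1}^2=(-1)^m Q_m$.
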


A more general result was obtained by the first author.
\begin{theorem}{\rm (\cite{yuan3})} \label{thm:4.2} Let $D\neq 2$ be a given positive nonsquare integer with $8\nmid D$.\\
(i) If $2|D$, then the following Diophantine equation
\begin{equation}\label{eq:33}
kx^2-ly^2=1,
\end{equation}
has integer solutions, where $k, l$ are such that $k>1,\; kl=D$.\\
(ii) If $2\nmid D$, then there is one and only one of the following
Diophantine equations
\begin{equation}\label{eq:34}
kx^2-ly^2=1,\,kx^2-ly^2=2
\end{equation}
which has integer solutions, where $k, l$ are such that $k>1,\; kl=D$ for the first equation and $k, l$ are such that $k>0,\; kl=D$ for the second equation.\\
(iii) If $2\nmid D$ and the Diophantine equation $x^2-Dy^2=4$ has
solutions in odd integers $x$ and $y$, then there is at least on $(k, l)$ with $k>1$, $kl=D$ such that the following Diophantine equation
\begin{equation}\label{eq:35}
kx^2-ly^2=4
\end{equation}
has integer solutions.
\end{theorem}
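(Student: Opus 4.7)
The plan is to construct the desired solutions by factoring the identity $(X_1-1)(X_1+1)=DY_1^2$ coming from the fundamental positive solution $(X_1,Y_1)$ of $x^2-Dy^2=1$, and analogously $(X_0-2)(X_0+2)=DY_0^2$ for part (iii), from the minimal positive odd solution $(X_0,Y_0)$ of $x^2-Dy^2=4$. Coprimality of the two factors, governed by the parity of $X_1$ (or $X_0$) together with the hypothesis $8\nmid D$, allows a coprime split $D=kl$ in which the 2-part of $D$ is absorbed into one side; the example $D=24$ with fundamental solution $(5,1)$ illustrates that the $8\nmid D$ hypothesis is genuinely needed.

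For part (i), I distinguish $D\equiv 2\pmod 4$ from $D\equiv 4\pmod 8$. In each subcase $X_1$ is odd, and a $2$-adic analysis of $X_1-1, X_1+1$ combined with a coprime factorization of the odd part of $D$ produces an identity $l_1v^2-2^ek_1u^2=1$ with $e\in\{1,2\}$. Relabelling gives $kv^2-lu^2=1$ with $kl=D$, $\gcd(k,l)=1$, and $k\ge 2^e>1$. Part (iii) is the same argument applied to $(X_0-2)(X_0+2)=DY_0^2$: since $X_0$ is odd, $X_0\mp 2$ are coprime odd integers, and the split gives $kv^2-lu^2=4$ with $k>1$. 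The existence half of (ii) follows the same pattern: for $D$ odd, the parity of $X_1$ determines which equation is solvable. If $X_1$ is even, $\gcd(X_1-1,X_1+1)=1$ and the direct split of $(X_1-1)(X_1+1)=DY_1^2$ gives $kv^2-lu^2=2$; if $X_1$ is odd, then $X_1^2\equiv 1\pmod 8$ forces $4\mid Y_1$, and splitting $\tfrac{X_1-1}{2}\cdot\tfrac{X_1+1}{2}=D(Y_1/2)^2$ yields $kv^2-lu^2=1$. In every case $k>1$ follows from a minimality argument: $k=1$ would yield a strictly smaller positive solution of the relevant Pell-type equation, contradicting the fundamental (resp.\ minimal odd) choice.

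The key step is the ``exactly one'' half of (ii). Writing $\epsilon^n=A_n+B_n\sqrt D$, the recurrence $A_n=2X_1A_{n-1}-A_{n-2}$ reduced modulo $2$ gives: $A_n$ is always odd when $X_1$ is odd; when $X_1$ is even, $A_n$ is even iff $n$ is odd. A solution $(x,y)$ of $kx^2-ly^2=1$ produces $\epsilon_1^2=(2ly^2+1)+2xy\sqrt D$ with odd first coordinate, while a solution of $kx^2-ly^2=2$ (for which $D$ odd forces $k,l,x,y$ all odd) produces $\epsilon_2^2=(kx^2-1)+xy\sqrt D$ with even first coordinate. Writing $\epsilon_i^2=\epsilon^{m_i}$, the $=2$ case forces $X_1$ even, while the $=1$ case forces $X_1$ odd unless $m_1$ is even. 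The main technical obstacle is the residual case $m_1$ even, where $\epsilon_1=\epsilon^{m_1/2}\in\mathbb Z[\sqrt D]$: here $\{1,\sqrt k,\sqrt l,\sqrt D\}$ is a $\mathbb Q$-basis of $\mathbb Q(\sqrt k,\sqrt l)$ whenever neither of $k,l$ is a perfect square, so $\epsilon_1\in\mathbb Z[\sqrt D]$ forces one of $k,l$ to be a perfect square. If $k=a^2>1$, the condition $\epsilon_1\in\mathbb Z[\sqrt D]$ is equivalent to $a\mid y$; combined with the trivial $a\mid ax$, substituting $(u,v)=(ax,y)$ into $u^2-lv^2=1$ (the Pell form of $kx^2-ly^2=1$) gives $a^2\mid 1$, a contradiction. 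If $l=b^2$ (including the subcase $b=1$, $k=D$), an analogous substitution shows $\epsilon_1$ is a norm-$(-1)$ unit of $\mathbb Z[\sqrt D]$, hence an odd power of the fundamental norm-$(-1)$ unit $\epsilon'$ with $\epsilon'^2=\epsilon$; so $\epsilon_1^2$ is an odd power of $\epsilon$, forcing $m_1$ odd and contradicting $m_1$ even.
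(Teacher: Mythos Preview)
Your existence arguments for (i), (iii), and the existence half of (ii) follow the same factorization idea as the paper: split $(X_1-1)(X_1+1)=DY_1^2$ (respectively $(X_0-2)(X_0+2)=DY_0^2$) according to the parity of $X_1$ and the $2$-adic content of $D$, and rule out $k=1$ by minimality of the fundamental solution. Your treatment is in fact more careful than the paper's about how the factor $2^e$ of $D$ is absorbed into one side when $D\equiv 4\pmod 8$.

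Where you genuinely diverge is in the ``exactly one'' half of (ii). The paper invokes Lemma~\ref{lem:2.4}: if $\varepsilon_1$ and $\varepsilon_3$ are the minimal solutions of $k_1x^2-l_1y^2=1$ and $k_2x^2-l_2y^2=2$ respectively, then both $\varepsilon_1^2$ and $\varepsilon_3^2/2$ equal the fundamental solution $x_0+y_0\sqrt D$ of $X^2-DY^2=1$. Comparing the $\sqrt D$-coefficients gives $2x_1y_1=x_2y_2$, which is impossible since $k_2,l_2,x_2,y_2$ are all odd. This is a one-line contradiction once Lemma~\ref{lem:2.4} is available. Your route avoids Lemma~\ref{lem:2.4} entirely: you use the parity pattern of $A_n$ to force $X_1$ even (from the $=2$ solution) and then $m_1$ even (from the $=1$ solution), and you eliminate the residual case $\epsilon_1\in\mathbb Z[\sqrt D]$ via linear independence in the biquadratic field $\mathbb Q(\sqrt k,\sqrt l)$ together with a norm-$(-1)$ unit argument when $l$ is a square. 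This is correct and self-contained; in effect you are reproving the relevant consequence of Lemma~\ref{lem:2.4} on the spot. The trade-off is that the paper's argument is much shorter but imports a structural lemma, while yours is longer but needs nothing beyond elementary facts about units in $\mathbb Z[\sqrt D]$.

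One remark: the paper's proof goes further than the theorem statement literally requires, establishing in (i) and within each branch of (ii) that the pair $(k,l)$ is \emph{unique}. This uniqueness is what is actually used in later applications (e.g.\ Theorem~4.25, where ``by Theorem~\ref{thm:4.2} we get $ax^n=abx^n$'' is a uniqueness assertion). Your write-up proves exactly what the theorem states and no more; if you intend to use the result downstream, you would need to add the short uniqueness step, which follows from Lemma~\ref{lem:2.4} by equating $k_1x_1^2+l_1y_1^2=k_2x_2^2+l_2y_2^2$ and $k_1x_1^2-l_1y_1^2=k_2x_2^2-l_2y_2^2$.
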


\begin{proof} Let $x_0+y_0\sqrt{D}$ be the fundamental solution of
the Diophantine equation
\begin{equation}\label{eq:36}
x^2-Dy^2=1.
\end{equation}
Then
\begin{equation}\label{eq:37}
(x_0+1)(x_0-1)=Dy_0^2.
\end{equation}
 (i) By the assumption we know that $(x_0-1, x_0+1)=2.$ Thus as $8\nmid D$ and from equation \eqref{eq:37} we get
\begin{equation}\label{eq:38}
x_0+1=2ku^2,\,x_0-1=2lv^2,\,kl=D,2uv=y_0.
\end{equation}
So the Diophantine equation
\begin{equation}\label{eq:39}
kx^2-ly^2=1
\end{equation}
has a solution $(u,v)$. We assume $k>1.$ Otherwise $l=D$ and
$(u,v)$ is a positive integer solution to the Diophantine equation
\eqref{eq:36}, contradicting the fact that $x_0+y_0\sqrt{D}$ is the fundamental solution since $v<y_0$.  Suppose now that there exist positive integers $k_1,l_1,k_2,l_2$ with $k_1l_1=k_2l_2=D,\, k_1>1,\, k_2>1,$ and Diophantine equations $k_1x^2-l_1y^2=1,\; k_2x^2-l_2y^2=1$ are all solvable in positive integers $x$ and $y$. Let $(x_i,y_i)$ be the minimal positive solution of the Diophantine equation $k_ix^2-l_iy^2=1,\, i=1, 2$. Then we have
\begin{equation}\label{eq:40}
k_1x_1^2-l_1y_1^2=k_2x_2^2-l_2y_2^2=1.
\end{equation}
By Lemma \ref{lem:2.4} we have
$$x_0+y_0\sqrt{D} = (x_1\sqrt{k_1}+y_1\sqrt{l_1})^2=(x_2\sqrt{k_2}+y_2\sqrt{l_2})^2.$$
Hence
\begin{equation}\label{eq:41}
k_1x_1^2+l_1y_1^2=k_2x_2^2+l_2y_2^2.
\end{equation}
From equations \eqref{eq:40} and \eqref{eq:41} we obtain
\begin{equation}\label{eq:42}
k_1x_1^2=k_2x_2^2,\,l_1y_1^2=l_2y_2^2.
\end{equation}
It follows that
$(k_1,l_2y_2^2)=(k_1,l_1y_1^2)=1,\,(k_2,l_1y_1^2)=(k_2,l_1y_1^2)=1$.
Therefore we have $(k_1,l_2)=(k_2,l_1)=1.$ Thus as $k_1l_1=k_2l_2=D$, we can easily prove that $k_1=k_2,l_1=l_2,$ as desired. This completes the proof of (i).\\

(ii) If $x_0$ is an odd, then $(x_0-1,x_0+1)=2.$ Thus using the same type of argument as in (i), we will prove that there exists one and
only one pair of positive integers $(k,l)$ with $k>1, kl=D$ such that the Diophantine equation
\begin{equation}\label{eq:43}
kx^2-ly^2=1.
\end{equation}
has positive integer solutions.

If $x_0$ is an even, then $(x_0-1,x_0+1)=1.$ Thus from equation \eqref{eq:37} we get
\begin{equation}\label{eq:44}
x_0+1=ku^2,\,x_0-1=lv^2,\,kl=D,uv=y_0.
\end{equation}
Therefore the Diophantine equation
\begin{equation}\label{eq:45}
kx^2-ly^2=2
\end{equation}
has a solution $(u,v)$. Suppose now that there exist positive integers $k_1, l_1, k_2, l_2$ with $k_1l_1=k_2l_2=D,$ and Diophantine equations $k_1x^2-l_1y^2=2,\; k_2x^2-l_2y^2=2$ are all solvable in positive integers $x$ and $y$. Let $(x_i,y_i)$ be the minimal positive solution of the Diophantine equation $k_ix^2-l_iy^2=2,i=1,2$. Then
\begin{equation}\label{eq:46}
k_1x_1^2-l_1y_1^2=k_2x_2^2-l_2y_2^2=2.
\end{equation}
By Lemma \ref{lem:2.4}, we have
$$x_0+y_0\sqrt{D} = \left(x_1\sqrt{k_1}+y_1\sqrt{l_1}\right)^2/2 = \left(x_2\sqrt{k_2}+y_2\sqrt{l_2}\right)^2/2.$$
Hence we obtain
\begin{equation}\label{eq:47}
k_1x_1^2+l_1y_1^2=k_2x_2^2+l_2y_2^2.
\end{equation}
From equations \eqref{eq:46} and \eqref{eq:47} we get
\begin{equation}\label{eq:48}
k_1x_1^2=k_2x_2^2,\,l_1y_1^2=l_2y_2^2.
\end{equation}
It follows that
$(k_1,l_2y_2^2)=(k_1,l_1y_1^2)=1,\,(k_2,l_1y_1^2)=(k_2,l_1y_1^2)=1$,
and so $(k_1,l_2)=(k_2,l_1)=1.$ Thus from $k_1l_1=k_2l_2=D$, we can
easily prove that $k_1=k_2,l_1=l_2,$ as desired. Finally, suppose
that there exist positive integers $k_1,l_1,k_2,l_2$ with
$k_1l_1=k_2l_2=D,$ and Diophantine equations
$k_1x^2-l_1y^2=1,k_2x^2-l_2y^2=2$ are all solvable in positive
integers $x$ and $y$. Let $(x_1,y_1)$ and $(x_2,y_2)$ be the minimal positive solution of the Diophantine equation $k_1x^2-l_1y^2=1$ and
$k_2x^2-l_2y^2=2$ respectively. Then by Lemma 2.4 we have
$$x_0+y_0\sqrt{D} = \left(x_1\sqrt{k_1}+y_1\sqrt{l_1}\right)^2 = \left(x_2\sqrt{k_2}+y_2\sqrt{l_2}\right)^2/2.$$
Hence we get
\begin{equation}\label{eq:49}
2x_1y_1=x_2y_2.
\end{equation}
From equation \eqref{eq:49}, we obtain $2|x_2y_2$, which is impossible since $2\nmid k_2l_2$ and $k_2x_2^2-l_2y_2^2=2.$ This completes the proof of (ii).\\

The proof of (iii) can be done exactly by the same method. This completes the proof of Theorem \ref{thm:4.2}.
\end{proof}

 Similarly, we will prove
\begin{theorem}\label{thm:4.3} Let $D$ be a given positive
nonsquare integer with $8|D$. Then there is at least one pair $(k,l)$ with $k>1, kl=D$ such that the Diophantine equation
\begin{equation}\label{eq:50}
kx^2-ly^2=1,
\end{equation}
has integer solutions.
\end{theorem}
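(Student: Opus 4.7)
The plan is to adapt the factorisation strategy used in the proof of Theorem~\ref{thm:4.2}(i). Let $x_0+y_0\sqrt{D}$ be the fundamental solution of $x^2-Dy^2=1$, and write $D=2^sD_1$ with $s\geq 3$ and $D_1$ odd. Since $D$ is even, $x_0$ must be odd, so $\gcd(x_0-1,x_0+1)=2$. The identity $(x_0-1)(x_0+1)=Dy_0^2$ then separates into two coprime halves, and my aim is to split $D$ as $kl$ (with $k>1$) so that $x_0+1=2ku^2$ and $x_0-1=2lv^2$ for suitable positive integers $u,v$; subtracting these two identities gives $ku^2-lv^2=1$, which produces the integer solution of \eqref{eq:50} that the theorem requires.

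To carry this out I would first compute the $2$-adic valuations of $x_0\pm 1$. Without loss of generality take $v_2(x_0-1)=1$ and $v_2(x_0+1)=t\geq 2$, and write $x_0-1=2B$, $x_0+1=2^tA$ with $A,B$ odd and $\gcd(A,B)=1$. The Pell identity rearranges to $2^{t+1}AB=2^sD_1y_0^2$, forcing $t=s+2v_2(y_0)-1$ and $AB=D_1(y_0')^2$ where $y_0'=y_0/2^{v_2(y_0)}$ is odd. Next I would define $l$ to be the product over primes $p\mid B$ of $p^{v_p(D_1)}$ and set $k=D/l=2^s(D_1/l)$; a prime-by-prime check, using $\gcd(A,B)=1$, shows that $B/l$ and $A/(D_1/l)$ are perfect squares, say $v^2$ and $(u')^2$. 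Setting $u=2^{v_2(y_0)-1}u'$ then recovers $x_0+1=2ku^2$ and $x_0-1=2lv^2$, and the inequality $k\geq 2^s\geq 8>1$ is automatic from $s\geq 3$.

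The main obstacle is the $2$-adic exponent $v_2(u)=v_2(y_0)-1$, which must be nonnegative, forcing $y_0$ to be even. In the analogous proof of Theorem~\ref{thm:4.2}(i) the congruence $x_0^2\equiv 1\pmod{8}$ combined with $8\nmid D$ automatically makes $y_0$ even, but when $8\mid D$ the same congruence is vacuous and $y_0$ may genuinely be odd (for instance, $y_0=1$ whenever $D=x_0^2-1$ with $x_0$ odd). Handling the $y_0$-odd regime is the hard part: one must either redistribute powers of $2$ between $k$ and $l$ through a more flexible splitting, or exploit structural identities forced by $x_0^2\equiv 1\pmod{2^{s+1}}$ and the arithmetic of $D_1$ to extract a working $(k,l)$ directly. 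This is where I expect the technical effort to concentrate, and where an auxiliary observation on the shape of $D$ may need to be invoked.
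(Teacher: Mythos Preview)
Your approach coincides with what the paper indicates: the paper gives no detailed argument for Theorem~\ref{thm:4.3}, only the phrase ``Similarly, we will prove'', pointing back to the factorisation method of Theorem~\ref{thm:4.2}(i). You reproduce that method correctly for the case $2\mid y_0$, and you correctly isolate the obstruction when $y_0$ is odd.

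However, the obstacle you flag is not a mere technical nuisance to be overcome by a cleverer splitting: the statement as written is actually \emph{false}, and the odd-$y_0$ case is exactly where it fails. Take $D=8$. The fundamental solution of $x^2-8y^2=1$ is $(x_0,y_0)=(3,1)$, so $y_0$ is odd. Since $\gcd(k,l)=1$ is imposed throughout the paper and $8=2^3$, the only admissible pair with $k>1$ and $kl=8$ is $(k,l)=(8,1)$; but $8x^2-y^2=1$ forces $y^2\equiv -1\pmod 8$, which is impossible. The same congruence obstruction rules out $D=24,40,48,80$, among others: whenever $8\mid D$ and $kl=D$ with $\gcd(k,l)=1$, all powers of $2$ lie in one factor, and then $kx^2-ly^2=1$ read modulo $8$ requires either $k\equiv 1\pmod 8$ or $l\equiv 7\pmod 8$, which need not hold.

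So your plan is the paper's plan, and your diagnosis of where it stalls is accurate; but no ``auxiliary observation on the shape of $D$'' will rescue the argument, because the conclusion itself does not hold in general. What one can salvage is the conditional statement: if $8\mid D$ and the fundamental $y_0$ is even, then your factorisation goes through verbatim and produces the desired pair $(k,l)$ with $k=2^s(D_1/l)>1$. A correct unconditional theorem would need either an extra hypothesis (e.g.\ a congruence condition on $D$, or $y_0$ even) or an enlarged conclusion allowing an alternative equation as in Theorem~\ref{thm:4.2}(ii).
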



\subsection{On Sierpinski's conjecture}\label{subsec:4.2}
The second application of St$\ddot{o}$rmer theorem deals with Sierpinski's conjecture. An integer of the form
$T_n=n(n+1)/2, n\in\mathbb{N}$ is called triangular number. In \cite[D23]{guy04}, it is stated that Sierpinski asked whether or not there exist four (distinct) triangular numbers in geometric progression. Szymiczek \cite{sz72} conjectured that the answer is negative. The problem of finding three such triangular numbers is readily reduced to finding solutions to a Pell equation. (An old result of G$\acute{e}$rardin \cite{ge14}. See also \cite{Di20}, \cite{sz63}). This implies that there  are infinitely many such triples. The smallest triple is $(T_1,\,T_3,\,T_8)$. In fact, a simple calculation shows that if $T_n=m^2$ then $T_n, \,
  T_{n+2m}=m(2n+3m+1), \, T_{3n+4m+1}=(2n+3m+1)^2$ form a geometric
  progression.

Recently M.Bennett \cite{be05} proved the following result.
  \begin{theorem}{\rm (\cite{be05})} \label{thm:4.4}
  There do not exist four distinct triangular numbers in geometric progression with the ratio being  a positive integer.
  \end{theorem}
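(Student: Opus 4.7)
The plan is to reformulate the hypothesis as a chain of three coupled Pell equations and then attack it with the St$\ddot{o}$rmer-type machinery from Section \ref{sec:3}. Suppose for contradiction that $T_a < T_b < T_c < T_d$ are four distinct triangular numbers in geometric progression with positive integer ratio $r \geq 2$. Using the identity $8 T_n + 1 = (2n+1)^{2}$ and writing $X = 2a+1$, $Y = 2b+1$, $Z = 2c+1$, $W = 2d+1$ (all odd, each $\geq 3$), the conditions $T_b = r T_a$, $T_c = r T_b$, $T_d = r T_c$ translate into
$$rX^{2} - Y^{2} \;=\; rY^{2} - Z^{2} \;=\; rZ^{2} - W^{2} \;=\; r - 1,$$
so each of $(X, Y)$, $(Y, Z)$, $(Z, W)$ is a positive integer solution of the single Pell-type equation $rU^{2} - V^{2} = r - 1$, which is the instance of \eqref{eq:1} with $k = r$, $l = 1$, $C = r - 1$.

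To exploit the chain, I would first eliminate the middle variable: substituting the first equation into the second yields $(rX)^{2} - Z^{2} = r^{2} - 1$, and similarly $(rY)^{2} - W^{2} = r^{2} - 1$. Factoring, $(rX - Z)(rX + Z) = (rY - W)(rY + W) = r^{2} - 1$, so both $rX - Z$ and $rY - W$ are positive divisors of $r^{2} - 1$ strictly less than $r$, giving only finitely many candidate pairs $(X, Z)$ and $(Y, W)$ for each fixed $r$. Plugging back into the underlying Pell equation and invoking Lemma \ref{lem:2.2}, every positive solution of $rU^{2} - V^{2} = r - 1$ has the form $V + U\sqrt{r} = \pm(\pm 1 + \sqrt{r})(x_{0} + y_{0}\sqrt{r})^{n}$, where $(x_{0}, y_{0})$ is the fundamental solution of $x^{2} - ry^{2} = 1$. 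Each of $(X, Y)$, $(Y, Z)$, $(Z, W)$ is thereby indexed by a nonnegative integer $n_{1}, n_{2}, n_{3}$ and an orbit label, and the chain condition ``the second coordinate of one pair equals the first coordinate of the next'' becomes a rigid set of relations among the triples $(n_{i}, \text{orbit}_{i})$.

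I would then apply the St$\ddot{o}$rmer-type results of Theorems \ref{Thm:3.3}--\ref{thm:3.12} to the auxiliary pairs $(X, Z)$ and $(Y, W)$: the divisibility structure of the Lehmer sequences $\{P_{n}\}, \{Q_{n}\}$ from Proposition \ref{prop:Lehmer}, combined with the fact that the right-hand side of the eliminated equation is $r^{2} - 1$, should force the indices $n_{1}, n_{2}, n_{3}$ to lie in a very short list (heuristically $n_{i} \in \{1, 2, 3\}$). Each surviving tuple would then be checked by hand to yield either the degenerate case $T_{a} = 0$ or a collision making two of $T_{a}, T_{b}, T_{c}, T_{d}$ coincide, contradicting the assumed distinctness.

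The main obstacle — and the reason the theorem is originally due to Bennett \cite{be05} rather than being a routine corollary of Section \ref{sec:3} — is that the three solutions $(X, Y)$, $(Y, Z)$, $(Z, W)$ need not lie in the same orbit of $rU^{2} - V^{2} = r - 1$: the chain can hop between the (typically two) orbits as it advances, and none of the theorems in Section \ref{sec:3} directly couples two or three solutions that share a coordinate. A truly self-contained proof along the lines above would therefore need a genuinely new ``linked'' variant of Theorem \ref{thm:3.11} that controls pairs of solutions sharing a coordinate. Failing that, one falls back on Bennett's hypergeometric / Pad\'e-approximant method in \cite{be05}, which effectively rules out the simultaneous solvability of the three Pell relations for every $r \geq 2$.
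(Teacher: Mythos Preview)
The paper does not actually prove Theorem~\ref{thm:4.4}: it is stated as a cited result of Bennett \cite{be05}, and no argument is supplied in the text. There is therefore no ``paper's own proof'' against which to compare your proposal. The surrounding discussion in Subsection~\ref{subsec:4.2} records that Fang \cite{fa07} later proved the stronger rational-ratio statement (Theorem~\ref{thm:4.5}) using only St\"ormer's theorem, and that the authors' Theorems~\ref{thm:4.6}--\ref{thm:4.8} generalize further via the Section~\ref{sec:3} machinery, but those results are likewise only cited here, not proved.

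Regarding your outline itself, you are honest that it is incomplete, and the gap you flag is real. Two additional technical slips are worth noting. First, Lemma~\ref{lem:2.2} as stated in the paper covers only $C\in\{1,2,4\}$, so your appeal to it for the equation $rU^{2}-V^{2}=r-1$ is not literally licensed; the correct input is the general theory of classes of solutions of $x^{2}-Dy^{2}=N$, and for arbitrary $N$ there may be more than the two orbits you posit. Second, the factorization $(rX-Z)(rX+Z)=r^{2}-1$ does pin down $(X,Z)$ (and likewise $(Y,W)$) for each fixed $r$, but $r$ ranges over all integers $\geq 2$, so ``finitely many candidates'' is finitely many \emph{per $r$}, not finitely many in total; a uniform-in-$r$ argument is still required, which is exactly where either Bennett's hypergeometric method or a genuinely linked St\"ormer-type statement must enter. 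Your concluding paragraph already concedes this, so what you have written is a plausible reformulation and heuristic plan rather than a proof.
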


Chen and Fang \cite{cf07} extended Bennett's result to the rational ratio and proved the following theorem.
  \begin{theorem}{\rm (\cite{cf07})} \label{thm:4.5} There do not exist four distinct  triangular numbers in geometric progression.
  \end{theorem}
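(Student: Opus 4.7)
The strategy is to reduce the rational-ratio case to Bennett's integer-ratio theorem (Theorem~\ref{thm:4.4}). Suppose, for contradiction, that $T_a<T_b<T_c<T_d$ are four distinct triangular numbers in geometric progression with common ratio $r=p/q$ in lowest terms and $p>q\geq 1$. The case $q=1$ is covered by Theorem~\ref{thm:4.4}, so I may assume $q\geq 2$.

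The coprimality $\gcd(p,q)=1$ together with the GP relations forces the normal form $(T_a,T_b,T_c,T_d)=(sq^{3},\,spq^{2},\,sp^{2}q,\,sp^{3})$ for some positive integer $s$. Since a positive integer $N$ is triangular exactly when $8N+1$ is an odd perfect square, I set $X^{2}=1+8sq^{3}$, $Y^{2}=1+8spq^{2}$, $Z^{2}=1+8sp^{2}q$, $W^{2}=1+8sp^{3}$ with $X,Y,Z,W$ positive odd integers. A direct computation then yields the chain of identities
\[
pX^{2}-qY^{2}\;=\;pY^{2}-qZ^{2}\;=\;pZ^{2}-qW^{2}\;=\;p-q,
\]
so $(X,Y)$, $(Y,Z)$, $(Z,W)$ are three positive integer solutions of the single Pell-like equation $pu^{2}-qv^{2}=p-q$, each sharing a coordinate with the next. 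This ``chaining'' is the key structural constraint.

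The second step is to parameterize the solutions of $pu^{2}-qv^{2}=p-q$ in the usual way as $\sqrt{p}\,u_{n}+\sqrt{q}\,v_{n}=(\sqrt{p}+\sqrt{q})\varepsilon^{n}$ together with its conjugate class, where $\varepsilon$ is the fundamental unit of norm $1$ in $\mathbb{Z}[\sqrt{pq}]$. The chaining condition $v_{n_i}=u_{n_{i+1}}$ then becomes a $\mathbb{Z}[\sqrt{pq}]$-linear identity between $\varepsilon^{n_i}$ and $\varepsilon^{n_{i+1}}$ whose coefficients do not depend on $n_i$; since $|\varepsilon|>1$, such an identity can be satisfied for only boundedly many indices. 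Simultaneously, every prime divisor of $u_n$ and $v_n$ must lie in $\{2\}\cup\{\text{primes dividing }pqs\}$, which is precisely the framework of the St\"ormer-type extensions proved in Section~\ref{sec:3}. Applying Theorems~\ref{thm:3.9}--\ref{thm:3.12} to the associated Lehmer sequences, with Proposition~\ref{prop:Lehmer} organizing the prime-divisor bookkeeping, the admissible Lehmer index is pinned down to a small explicit set of exponents.

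I expect the principal obstacle to be carrying out this case analysis uniformly in $p$, $q$, and $s$ and ruling out the exceptional configurations. In particular the family $(k,l)=(5,1)$ singled out in Theorems~\ref{Thm:3.4}, \ref{thm:3.11} and~\ref{thm:3.12}, which is tied to Fibonacci-type triangular numbers, must be handled separately and shown to produce no four-term geometric progression of triangular numbers. Once the finitely many residual small-index cases are dispatched by direct computation, each either yields an immediate contradiction or forces $q=1$, reducing us to Theorem~\ref{thm:4.4} and completing the proof.
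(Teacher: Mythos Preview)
The paper does not actually prove Theorem~\ref{thm:4.5}; it is quoted as a result of Chen and Fang, and the surrounding paragraph only records that Fang later gave a St\"ormer-only proof and that the authors' own generalizations (Theorems~\ref{thm:4.6}--\ref{thm:4.8}) rely on Theorems~\ref{Thm:3.2}--\ref{Thm:3.4} together with a Lehmer-sequence result. So there is no in-paper argument to compare your attempt against. That said, your proposal does not yet constitute a proof, and it contains a genuine gap.

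Your reduction to the chain $(X,Y),(Y,Z),(Z,W)$ of solutions of $pu^{2}-qv^{2}=p-q$ is correct. The breakdown is the sentence ``every prime divisor of $u_n$ and $v_n$ must lie in $\{2\}\cup\{\text{primes dividing }pqs\}$, which is precisely the framework of the St\"ormer-type extensions.'' This is unjustified and in general false: from $X^{2}=1+8sq^{3}$ there is no reason whatsoever for the prime divisors of $X$ (or of $Y,Z,W$) to be restricted to $2$ and the primes of $pqs$. But that restriction is exactly the \emph{hypothesis} of every theorem in Section~\ref{sec:3}; without it, Theorems~\ref{thm:3.9}--\ref{thm:3.12} give no information. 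In addition, those theorems treat $kx^{2}-ly^{2}=C$ with $C\in\{1,2,4\}$, whereas your equation has right-hand side $p-q$, which need not be any of these. So the invocation of the St\"ormer machinery is not available as stated. The actual route in the literature exploits the chaining you found in a different way: from $T_aT_c=T_b^{2}$ and $T_bT_d=T_c^{2}$ one extracts equations of the shape $kx^{2}-ly^{2}=1$ (or $4$) in which the relevant coordinate genuinely has its prime support inside that of the \emph{minimal} solution, and only then does St\"ormer (Theorems~\ref{Thm:3.1}--\ref{Thm:3.4}) bite. Your outline also explicitly defers the decisive case analysis (``I expect the principal obstacle to be\ldots'', ``Once the finitely many residual small-index cases are dispatched\ldots''), so even on its own terms it is a plan rather than a proof.
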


  Using the theory of Pell equations and a result of Bilu-Hanrot-Voutier \cite{Bilu-Hanrot-Voutier:2001}
  on primitive divisors of Lucas and Lehmer numbers, Yang-He \cite{yh07} and Yang \cite{Yang:2008} claimed
  that there is no geometric progression which contains four distinct triangular numbers. But their proof
  is under the assumption that the geometric progression has an integral common ratio. Fang \cite{fa07},
  using only the St$\ddot{o}$rmer theorem on Pell's equation, showed that no geometric progression contains
  four distinct triangular
   numbers.\\

   Note that if $T_n=n(n+1)/2, n\in\mathbb{N}$ is a triangular number,
   then $8T_n=m^2-1$, where $m=2n+1$. One can notice that Sierpinski problem is
    equivalent to ask whether or not there exist four distinct integers of the form $m^2-1$
    in geometric progression. In this paper, we consider the question whether or not there exist four
    distinct integers of the form $Dm^2\pm C$  with $D, m\in\mathbb{N}, C=1,2,4$ in geometric progression.
    Recently, the first two authors used only Theorems \ref{Thm:3.2}, \ref{Thm:3.3}, \ref{Thm:3.4} and a
     result of Lehmer numbers to prove the following results.

\begin{theorem}{\rm (\cite{luoy1})}\label{thm:4.6} Let $D$ be a positive integer, then no geometric progression contains four distinct integers of the form $Dm^2\pm 1,m\in\mathbb{N}$.
\end{theorem}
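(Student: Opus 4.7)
The plan is to proceed by contradiction. Suppose $a_1<a_2<a_3<a_4$ is a geometric progression of four distinct positive integers with $a_i = Dm_i^2+\epsilon_i$, $\epsilon_i\in\{\pm1\}$, $m_i\in\mathbb{N}$. Writing the common ratio in lowest terms as $r=p/q$, the integrality of $a_4=a_1 r^3$ forces $q^3\mid a_1$, so $a_i = N p^{i-1} q^{4-i}$ for some positive integer $N$. My first step is a sign-parity reduction: expanding the three-term identity $a_i a_{i+2} = a_{i+1}^2$ yields
\[
D^2(m_{i+1}^4 - m_i^2 m_{i+2}^2) + D(2\epsilon_{i+1}m_{i+1}^2 - \epsilon_{i+2}m_i^2 - \epsilon_i m_{i+2}^2) + (1-\epsilon_i\epsilon_{i+2}) = 0,
\]
so $D\mid 1-\epsilon_i\epsilon_{i+2}$. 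Consequently, for $D\geq 3$ we must have $\epsilon_i=\epsilon_{i+2}$, leaving only the constant-sign pattern and the alternating pattern $\epsilon_1=\epsilon_3=-\epsilon_2=-\epsilon_4$. The remaining cases $D\in\{1,2\}$ are dispatched by results already cited in the paper (the $D=1$, $\epsilon=-1$ situation being the triangular-number case handled by the Fang-type argument), together with the classical Störmer Theorem~\ref{Thm:3.1} and a finite direct check.

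In the constant-sign case with $D\geq 3$, the two-term ratio identity $qa_{i+1}=pa_i$ gives $D(qm_{i+1}^2 - pm_i^2) = (p-q)\epsilon$, so the integer $K := qm_{i+1}^2 - pm_i^2$ is independent of $i$ and $D\mid p-q$. Hence $(m_1,m_2)$, $(m_2,m_3)$, $(m_3,m_4)$ are three positive-integer solutions of the Pell-type equation
\[
qY^2 - pX^2 = K.
\]
A short calculation shows that $K=0$ would force $m_1=m_3$, contradicting the distinctness of the $a_i$; hence $K\neq 0$ and $|p-q|\geq D$. The alternating-sign case yields an analogous equation with $D\mid p+q$ and with the right-hand side alternating in sign between consecutive pairs.

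At this point I would apply Lemma~\ref{lem:2.2} to the equation $qY^2-pX^2=K$: every positive-integer solution arises from the minimal positive solution by multiplication by an odd power of the fundamental unit of $\mathbb{Z}[\sqrt{pq}]$. Three chained solutions $(m_i,m_{i+1})$ therefore correspond to three consecutive odd exponents in the Lehmer-sequence parametrization of Proposition~\ref{prop:Lehmer}. Combined with the identity $a_i-\epsilon_i=Dm_i^2$ and the coprime factorization $a_i=Np^{i-1}q^{4-i}$, every prime divisor of $m_i$ is forced into the finite set of primes dividing $D$, $p$, $q$, and $N$, placing us in the hypotheses of the Störmer-type Theorems~\ref{Thm:3.2}, \ref{Thm:3.3}, and \ref{Thm:3.4} (together with Remark~\ref{rmk:3.1}). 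These theorems bound the length of the solution chain to at most two, contradicting the existence of three successive solutions. The sporadic exception $(k,l,x,y)=(5,1,5,11)$ in Theorem~\ref{Thm:3.4} is ruled out by direct substitution into the geometric-progression identity.

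The main technical obstacle is the last paragraph: verifying that every prime divisor of each $m_i$ actually lies in the prescribed finite set so that the Störmer-type theorems apply. This requires a careful accounting of the coprimality among $N$, $p$, $q$, and $D$ dictated by the factorization $a_i=Np^{i-1}q^{4-i}$, together with a parallel but slightly different analysis of the alternating-sign sub-case, where the Lehmer identity $P_{2n}=2P_nQ_n$ from Proposition~\ref{prop:Lehmer} must be invoked to bridge the parity mismatch between the chained solutions.
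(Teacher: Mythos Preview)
The paper does not prove Theorem~\ref{thm:4.6} in the text; it cites \cite{luoy1} and records only that the argument uses Theorems~\ref{Thm:3.2}--\ref{Thm:3.4} together with a Lehmer-sequence result. Your overall strategy matches that description, but two steps fail as written. First, Lemma~\ref{lem:2.2} is stated only for right-hand side $C\in\{1,2,4\}$; your equation $qY^2-pX^2=K$ with $K=(p-q)\epsilon/D$ need not be of this form, and for general $K$ the positive solutions may split into several inequivalent classes, so the claim that the three pairs $(m_i,m_{i+1})$ correspond to consecutive odd exponents in a single Lehmer parametrisation is unjustified. Second, the assertion that every prime divisor of $m_i$ lies among the primes of $D$, $p$, $q$, $N$ is not supported: from $Dm_i^2=Np^{i-1}q^{4-i}-\epsilon_i$ one obtains no restriction whatsoever on the prime factors of $m_i$. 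Even if it did hold, Theorems~\ref{Thm:3.2}--\ref{Thm:3.4} require the primes of the variable to divide the \emph{coefficient} $k$ or $l$ (Remark~\ref{rmk:3.1} relaxes this only to primes of the minimal solution's coordinate), not merely to lie in some fixed finite set. You acknowledge this obstacle yourself in the final paragraph; it is the heart of the matter, not a detail.

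A setup that does land inside the St\"ormer hypotheses is to pair terms two apart rather than consecutive ones. With $a_i=Np^{i-1}q^{4-i}$ and, say, $\epsilon_i=-1$, the identities $Dm_1^2-1=Nq\cdot q^2$ and $Dm_3^2-1=Nq\cdot p^2$ exhibit $(m_1,q)$ and $(m_3,p)$ as two solutions of the \emph{same} equation $DX^2-(Nq)Y^2=1$; symmetrically $(m_2,q)$ and $(m_4,p)$ solve $DX^2-(Np)Y^2=1$. Now the solution with $Y=q$ (respectively $Y=p$) automatically satisfies the hypothesis of Theorem~\ref{Thm:3.2}(ii), since every prime of $q$ divides $l=Nq$ (respectively every prime of $p$ divides $l=Np$). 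This pins down the minimal solution of each equation, after which Lemma~\ref{lem:2.2} is legitimately applicable (here $C=1$), and Lehmer-sequence divisibility constrains the remaining solution. The $\epsilon=+1$ and alternating-sign cases are handled by the same pairing with the roles of $k$ and $l$ interchanged. Your consecutive-pair equation $qY^2-pX^2=K$ discards exactly this structure and leaves you outside the hypotheses you need.
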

\begin{theorem}{\rm (\cite{luoy1})}\label{thm:4.7} Let $D$ be a positive integer, then there do not exist four distinct integers of the form $Dm^2\pm
2,m\in\mathbb{N}$ in geometric progression.
\end{theorem}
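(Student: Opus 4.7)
The plan is to argue by contradiction. Suppose four distinct positive integers $a_1<a_2<a_3<a_4$, each of the form $Dm^2 + c$ with $c \in \{-2,+2\}$ and $m\in\mathbb{N}$, form a geometric progression. Writing the common ratio in lowest terms as $p/q$ ($p>q\geq 1$, $\gcd(p,q)=1$), we have $a_i = Aq^{4-i}p^{i-1}$ for some positive integer $A$, and $a_i = Dm_i^2 + c_i$ for $c_i \in \{-2,+2\}$ and $m_i \in \mathbb{N}$. I would first dispatch the mixed-sign cases: out of the $2^4 = 16$ possibilities for $(c_1,c_2,c_3,c_4)$, the non-uniform ones are ruled out by pitting the GP relation $a_{i+1}q = a_i p$ against congruences modulo $4$ and modulo odd prime divisors of $D$, leaving only the uniform case $c_1=c_2=c_3=c_4 = c$ to treat.

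In the uniform case, the telescoping identities $a_{i+1} - a_i = Aq^{3-i}p^{i-1}(p-q)$ together with $\gcd(p,q)=1$ force $D \mid A(p-q)$, so $A' := A(p-q)/D$ is a positive integer and
\begin{equation*}
m_2^2 - m_1^2 = A'q^2, \qquad m_3^2 - m_2^2 = A'pq, \qquad m_4^2 - m_3^2 = A'p^2.
\end{equation*}
The parallel computation $pa_1 = qa_2$ yields $D \mid c(p-q)$, i.e., $D \mid 2(p-q)$. Next, the identity $q(Dm_{i+1}^2+c) = p(Dm_i^2+c)$ rearranges to the Pell-type equation $qX^2 - pY^2 = C$ with $C := c(p-q)/D \in \mathbb{Z}\setminus\{0\}$, and the three pairs $(m_{i+1}, m_i)$ for $i=1,2,3$ are positive integer solutions forming a chain in which the $Y$-coordinate of one equals the $X$-coordinate of the previous. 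By Lemma \ref{lem:2.2} these solutions correspond to three odd indices $n_1 < n_2 < n_3$ in the Lehmer sequence associated with the minimal positive solution of $qX^2 - pY^2 = C$.

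The heart of the proof is to combine the chain condition with Theorem \ref{Thm:3.3} and Proposition \ref{prop:Lehmer}. In the main subcase $p-q = D$ we have $|C| = 2$, and the chain condition $m_i = Y_{n_i} = X_{n_{i-1}}$ together with the divisibility properties of Lehmer components (Proposition \ref{prop:Lehmer}) forces every prime divisor of each $m_i$ to lie among the primes of $p$ or $q$; Theorem \ref{Thm:3.3} then restricts each Lehmer index to the short list $\{1, 3\}$, contradicting the existence of three distinct such indices. The main obstacle lies in the general subcase $p-q \neq D$, where $|C| \neq 2$ and Theorem \ref{Thm:3.3} does not apply directly; the plan is to use the divisibility $D \mid 2(p-q)$ and Lemma \ref{lem:2.3} to reduce via a scaling argument (replacing $D$ by a proper divisor and adjusting the Lehmer sequence accordingly) to the main subcase, and to handle the exceptional families flagged in Remark \ref{rmk:3.1} by direct verification.
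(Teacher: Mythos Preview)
The paper itself does not prove Theorem~\ref{thm:4.7}; it is quoted from \cite{luoy1} with only the hint that the argument uses ``only Theorems \ref{Thm:3.2}, \ref{Thm:3.3}, \ref{Thm:3.4} and a result of Lehmer numbers.'' So there is no in-paper proof to compare line by line, but your outline can be weighed against that hint, and it has a genuine gap at the Pell step. The equation you reach is $qX^2-pY^2=C$ with $C=c(p-q)/D$, where $p,q$ come from the common ratio. Lemma~\ref{lem:2.2}, and with it the single-Lehmer-sequence parametrisation you invoke next, is stated only for $C\in\{1,2,4\}$; for other $C$ the solution set can split into several classes and the sentence ``these solutions correspond to three odd indices $n_1<n_2<n_3$'' is simply unsupported. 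You flag this yourself for $p-q\neq D$, but the proposed fix (``reduce via a scaling argument'' using Lemma~\ref{lem:2.3}) is not an argument: Lemma~\ref{lem:2.3} says when $Q_n=ku^2$ forces $n\in\{1,3,5\}$ and offers no mechanism for turning $C$ into $\pm2$. Even in your main subcase $|C|=2$, the assertion that the chain $m_i=Y_{n_i}=X_{n_{i-1}}$ forces every prime of $m_i$ to divide $pq$ is not justified: writing $X_n=x_1Q_n$, $Y_n=y_1P_n$, the chain gives $x_1Q_{n_{i-1}}=y_1P_{n_i}$, which constrains the indices but says nothing about the prime support of $x_1,y_1$, so Theorem~\ref{Thm:3.3} cannot be triggered in the way you describe.

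The route the paper points to is different and avoids this obstruction. From $a_1a_3=a_2^2$ and $a_2a_4=a_3^2$ one sees that $a_1,a_3$ share a squarefree kernel $k$ and $a_2,a_4$ a squarefree kernel $l$; writing $a_i=Dm_i^2+c$ then gives two genuine Pell equations $kU^2-DV^2=c$ and $lU^2-DV^2=c$ with $c=\pm2$, each carrying \emph{two} solutions $(u_i,m_i)$. Here $D$ sits as a coefficient, so Theorem~\ref{Thm:3.3} applies directly, and the GP relations among the $u_i$ supply exactly the divisibility input (primes of the relevant $m_i$ dividing $D$ or $m_1$) that the St\"ormer-type statement needs; the Lehmer facts in Proposition~\ref{prop:Lehmer} and Lemma~\ref{lem:2.3} then pin down the indices. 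Recasting your reduction so that $D$, not the ratio $p/q$, appears as a Pell coefficient is the missing idea; your treatment of the mixed-sign cases by unspecified congruences should also be made explicit.
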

\begin{theorem}{\rm (\cite{luoy1})}\label{thm:4.8} Let $D$ be a positive integer, then there do not exist four distinct integers of the form $Dm^2\pm
4,m\in\mathbb{N}$ in geometric progression.
\end{theorem}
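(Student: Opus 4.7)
The overall plan is to argue by contradiction, following the template that was used for Theorems~\ref{thm:4.6} and~\ref{thm:4.7}. Assume that four distinct positive integers $a_1<a_2<a_3<a_4$, each of the form $a_i=Dm_i^2+4\epsilon_i$ with $\epsilon_i\in\{-1,+1\}$ and $m_i\in\mathbb{N}$, form a geometric progression. The Störmer-type tools relevant here are precisely the results for the equation $kx^2-ly^2=4$, namely Theorems~\ref{Thm:3.4}, \ref{thm:3.11}, and~\ref{thm:3.12}; the objective is to reduce the GP hypothesis to an instance of one of these theorems in which the conclusion forces the $a_i$ to collapse.

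The first step is a sign reduction. Expanding $a_1a_3=a_2^2$ gives
$$D^2(m_2^4-m_1^2m_3^2)+4D\bigl(2\epsilon_2 m_2^2-\epsilon_1 m_3^2-\epsilon_3 m_1^2\bigr)+16(1-\epsilon_1\epsilon_3)=0.$$
If $\epsilon_1\epsilon_3=-1$, then $D\mid 32$, reducing to the finitely many cases $D\in\{1,2,4,8,16,32\}$, which can be disposed of by direct computation (in each, one checks, using the standard identity $8T_n=(2n+1)^2-1$ in the $D=1$ case and analogous small-parameter analyses otherwise, that no four-term GP of the prescribed shape exists). The analogous expansion of $a_2a_4=a_3^2$ forces $\epsilon_2=\epsilon_4$. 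So the essential case reduces to either $\epsilon_1=\epsilon_2=\epsilon_3=\epsilon_4$ or the alternating pattern $\epsilon_1=\epsilon_3\neq\epsilon_2=\epsilon_4$.

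The second step exploits the multiplicative skeleton of the GP. Writing the common ratio in lowest terms as $p/q$ with $p>q\geq 1$ and $\gcd(p,q)=1$, there is a positive integer $k$ such that $a_i=k\,p^{i-1}q^{4-i}$. Substituting into $a_i=Dm_i^2+4\epsilon_i$ and combining with the identities $a_1a_3=a_2^2$ and $a_2a_4=a_3^2$ (using the sign information from step one), one can factor out the common $k$ and rearrange to obtain a Pell-type equation $k'X^2-l'Y^2=4$ whose positive integer solution $(X,Y)$ (built from the $m_i$ and the GP-ratio data) has one coordinate whose prime divisors are confined to a prescribed set, namely the primes of $k'$ together with a controlled auxiliary set of primes coming from $p$ and $q$. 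Theorems~\ref{Thm:3.4}, \ref{thm:3.11} and~\ref{thm:3.12} then force the solution to be one of a short list of powers $\varepsilon,\varepsilon^2,\varepsilon^3,\varepsilon^4,\varepsilon^6,\varepsilon^{q},\varepsilon^{q^2}$ of the minimal positive solution $\varepsilon$. For each admissible exponent, recovering the $m_i$ and hence the $a_i$ forces either two of the $a_i$ to coincide or violates the monotonicity $a_1<a_2<a_3<a_4$, the desired contradiction.

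The main obstacle is managing the exceptional solutions that the Störmer-type theorems legitimately allow, namely $(k,l,x,y)=(5,1,5,11)$ in Theorem~\ref{Thm:3.4} and the $D=5$, $l=1$ families in Theorems~\ref{thm:3.11}, \ref{thm:3.12} (for example $x_{25}=75025=5^2\cdot3001$ and $x_{125}=5^3\cdot3001\cdot 158414167964045700001$). Each exceptional triple must be substituted back into the GP parametrization to check, by explicit computation, that it does not complete to a geometric progression of four distinct integers of the form $Dm^2\pm 4$. Combined with the finite sign-reduction residue $D\mid 32$, this case analysis is the delicate but essentially mechanical part of the argument, and completes the proof.
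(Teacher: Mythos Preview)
The paper does not actually prove Theorem~\ref{thm:4.8}; it is quoted from \cite{luoy1}, with only the remark preceding Theorems~\ref{thm:4.6}--\ref{thm:4.8} that the argument there uses ``only Theorems~\ref{Thm:3.2}, \ref{Thm:3.3}, \ref{Thm:3.4} and a result of Lehmer numbers.'' So there is no detailed proof here to compare against, but the indicated toolkit (the basic St\"ormer extensions for $C=1,2,4$ together with a Lehmer-sequence square-class result such as Lemma~\ref{lem:2.3}) is not the one you propose: you instead reach for the refined one- and two-extra-prime versions, Theorems~\ref{thm:3.11} and~\ref{thm:3.12}.

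Independently of that comparison, your proposal has a genuine gap. The whole argument hinges on Step~2, where you assert that one can ``factor out the common $k$ and rearrange to obtain a Pell-type equation $k'X^2-l'Y^2=4$'' with a coordinate whose prime divisors are controlled---but you never say what $k',l',X,Y$ are or how they come from the GP data $a_i=k\,p^{i-1}q^{4-i}=Dm_i^2+4\epsilon_i$. This reduction is the substantive content of the proof, not a routine rewriting; without it the appeal to Theorems~\ref{Thm:3.4}, \ref{thm:3.11}, \ref{thm:3.12} is vacuous. Note also that the exponent list you quote, $\varepsilon,\varepsilon^2,\varepsilon^3,\varepsilon^4,\varepsilon^6,\varepsilon^q,\varepsilon^{q^2}$, is not what Theorems~\ref{thm:3.11}--\ref{thm:3.12} deliver: those theorems concern $kx^2-ly^2=4$ with $k>1$ and yield only \emph{odd} exponents (and Theorem~\ref{thm:3.12}(i) gives $\varepsilon^{5q}$, not $\varepsilon^4$ or $\varepsilon^6$); the even exponents belong to Theorems~\ref{thm:3.9}--\ref{thm:3.10} for $x^2-Dy^2=4$. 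Finally, the ``direct computation'' promised for $D\mid 32$ is not a finite check in any obvious sense, since for each such $D$ there are still infinitely many integers $Dm^2\pm4$; that residual case needs its own St\"ormer-type argument, not a tabulation.
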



\subsection{On Ma's conjecture}\label{subsec:4.3}

The third application is related to Ma's conjecture. Let $\mathbb{N}_0$ be the set of all nonnegative integers. In 1992, S. L. Ma \cite{ma01} presented the following conjecture.
 \begin{conjecture}\label{conj:Ma} Let $p$ an odd prime and $b,t,r\in\mathbb{N}.$ Then\\
 (A) $Y=2^{2b}p^{2t}-2^{2b}p^{t+r}+1$ is a square if and only if
 $t=r$, that is, if and only if $Y=1.$\\
 (B) $Z=2^{2b+2}p^{2t}-2^{b+2}p^{t+r}+1$ is a square if and only if
  $p=5,b=3,t=1$, and $r=2$, that is, if and only if $Z=2401.$
 \end{conjecture}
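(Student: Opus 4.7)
The plan is to transform each equation in Ma's conjecture into a Pell equation $X^2 - D_0 W^2 = 1$ whose $W$-coordinate contains exceptional primes $\{2\}$ in part (A), or $\{2, p\}$ in part (B), that do not divide $D_0$, and then apply the generalised St\"ormer theorems from Section~\ref{sec:3} to pin the solution down to a short explicit list. Each entry of that list is then either ruled out or identified with the stated trivial or sporadic solution by a Lehmer-valuation analysis based on Proposition~\ref{prop:Lehmer} and Lemma~\ref{lem:2.3}.

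For part (A), the cases $t \le r$ are immediate: $t < r$ forces $Y < 1$ (impossible for $b \geq 0$, $t \geq 1$), and $t = r$ gives $Y = 1 = 1^2$. Assume $t > r$. Then $y^2 - 1 = 2^{2b} p^{t+r}(p^{t-r} - 1)$, and extracting $p^{\lfloor (t+r)/2 \rfloor}$ from $\sqrt{D}$ exhibits $(y,\, 2^b p^{\lfloor (t+r)/2 \rfloor})$ as a positive integer solution of
\[
X^2 - D_0 W^2 = 1, \qquad D_0 = \begin{cases} p^{t-r} - 1, & t+r \text{ even}, \\ p(p^{t-r} - 1), & t+r \text{ odd}. \end{cases}
\]
Since $p$ is odd, $p^{t-r} - 1$ is even, so $2 \mid D_0$, while $p \nmid D_0$. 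Hence $p$ is the only exceptional prime in the $W$-coordinate, and Theorem~\ref{Thm:3.5} gives
\[
y + 2^b p^{\lfloor (t+r)/2 \rfloor} \sqrt{D_0} = \varepsilon_0^m \quad \text{for some } m \in \{1, 2, 3\},
\]
where $\varepsilon_0 = X_0 + Y_0 \sqrt{D_0}$ is the fundamental solution. Expanding $\varepsilon_0^m$ and writing its $W$-coordinate as $Y_0 P_m$ via Proposition~\ref{prop:Lehmer}, the identity $Y_0 P_m = 2^b p^{\lfloor (t+r)/2 \rfloor}$, combined with the coprimality of $(X_0, Y_0)$, forces the factor $p^{\lfloor(t+r)/2\rfloor}$ entirely onto one side of the factorisation. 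Lemma~\ref{lem:2.3} and a direct $2$-adic count then rule out $m = 2, 3$. The case $m = 1$, which asserts that the fundamental $Y_0$ itself equals $2^b p^{\lfloor(t+r)/2\rfloor}$, is dispatched by iterating the same argument on the Pell equation $X^2 - (p^{t-r'} - 1) W^2 = 1$ with a smaller $r'$ (obtained from $X_0^2 - 1 = D_0 Y_0^2$), producing a descent that terminates in the excluded case $t = r$.

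Part (B) is treated analogously: writing $z^2 - 1 = 2^{b+2} p^t (2^b p^t - p^r)$ and case-splitting on the sign of $2^b p^t - p^r$ produces a Pell equation in which both $2$ and $p$ may fail to divide $D_0$. Theorem~\ref{thm:3.6} then enlarges the allowed list of exponents to $m \in \{1, 2, 3, 4, 6, q^s\}$ for some odd prime $q$, and each case is treated by the same valuation/Lehmer method. The sporadic solution $(p, b, t, r) = (5, 3, 1, 2)$ arises in the subcase $t < r$ from the exceptional triple $(k, l, x, y) = (5, 1, 5, 11)$ of Theorem~\ref{Thm:3.4}, transported through Lemma~\ref{lem:2.4}; a direct check yields $Z = 2401 = 49^2$. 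The remaining $m$-cases are ruled out by parity and $p$-adic analysis.

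The main obstacle is the $m = 1$ descent in part (A) and the explicit identification of the sporadic case in part (B): both require tight control of Lehmer indices and careful matching against the exceptional situations already catalogued in Theorems~\ref{Thm:3.4} and~\ref{thm:3.11}. The rest is bookkeeping of $p$-adic and $2$-adic valuations in the Lehmer expansions of $\varepsilon_0^m$.
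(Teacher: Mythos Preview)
This statement is labelled \emph{Conjecture} in the paper, and with reason: the paper does \emph{not} prove part (B). Part (A) is settled there, but indirectly, by citing Cao's Theorem~\ref{thm:4.9} (the case $s=1$). So your proposal is being measured against (i) a known proof of (A) that proceeds differently from yours, and (ii) no proof at all of (B).

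For part (A) your reduction is not the one the paper (and Cao) use, and the difference matters. In the paper's method one sets $l=p^{t-r}$, $D=l(l\cdot 1-1)=l(l-1)$ and observes that $(1,1)$ is the minimal solution of $lX^{2}-(l-1)Y^{2}=1$; by Lemma~\ref{lem:2.4} this makes the fundamental solution of $X^{2}-DY^{2}=1$ \emph{explicit}, namely $\varepsilon_{0}=(2l-1)+2\sqrt{D}$. The St\"ormer-type theorems then compare the given solution $(x,\,2^{b}p^{r})$ with this known $\varepsilon_{0}$, and the exceptional cases in Theorems~\ref{Thm:3.1} and~\ref{Thm:3.5} are disposed of by direct computation. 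You instead take $D_{0}=p^{t-r}-1$ (or $p(p^{t-r}-1)$), for which you do \emph{not} know the fundamental solution. Theorem~\ref{Thm:3.5} then only tells you that your solution is $\varepsilon_{0}^{m}$ with $m\in\{1,2,3\}$ for an \emph{unknown} $\varepsilon_{0}$, and the case $m=1$ --- i.e.\ the possibility that $(y,\,2^{b}p^{\lfloor(t+r)/2\rfloor})$ is itself fundamental --- is exactly the substance of the problem. Your ``descent to a smaller $r'$\,'' is not well defined: from $X_{0}^{2}-1=D_{0}Y_{0}^{2}$ with $D_{0}=p^{t-r}-1$ and $Y_{0}=2^{b}p^{\lfloor(t+r)/2\rfloor}$ one does not obtain another equation of the same shape with a strictly smaller parameter. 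The paper's trick of building the fundamental solution into $D$ is precisely what removes this difficulty.

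For part (B) the proposal is only a sketch, and the gap is structural. Applying Theorem~\ref{thm:3.6} with two exceptional primes leaves, among the finite list, the case $\varepsilon^{q}$ for an \emph{arbitrary} odd prime $q$ (with $(q,2p)=1$), and ``the same valuation/Lehmer method'' does not eliminate this infinite family without further ideas. Moreover, the sporadic solution $(p,b,t,r)=(5,3,1,2)$ has $t<r$, a regime in which the sign of $2^{b}p^{t}-p^{r}$ and hence the shape of $D_{0}$ changes; you allude to this but do not carry it through. Since the paper records (B) as open, a complete argument here would be new; as written, the $m=1$ and $m=q$ cases are not handled.
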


  Moreover, Ma proved that conjecture (A) and (B) implies McFarland's conjecture on Abelian difference sets with multiplier $-1$. See also \cite{jun01}, Conjecture 13.9. In 1993, using Lemma \ref{lem:2.4} and Theorem \ref{Thm:3.1}, Zhenfu Cao \cite{cao01} proved the following result.
  \begin{theorem}{\rm (\cite{cao01})} \label{thm:4.9} If $t_i>r_i\; (i=1,2,\cdots,s),\; a\geq b,$ then the
  Diophantine equation
  $$x^2=2^{2a}p_1^{2t_1}\cdots p_s^{2t_s}-2^{a+b}p_1^{t_1+r_1}\cdots p_s^{t_s+r_s}+1$$
 has no solutions with $x,a,b,t_i,r_i\in\mathbb{N},\; i=1,2,\cdots,s,$
 where the $p_i$ are prime numbers.
 \end{theorem}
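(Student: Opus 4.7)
The plan is to reveal the given equation as a Pell equation in disguise and then invoke St\"ormer's theorem directly. First I would set $u=2^{a}\prod_{i=1}^{s}p_i^{t_i}$ and $v=2^{b}\prod_{i=1}^{s}p_i^{r_i}$, so the equation becomes
$$x^{2}=u^{2}-uv+1.$$
The hypotheses $a\ge b$ and $t_i>r_i$ force $v\mid u$, so writing $q=u/v=2^{a-b}\prod_{i=1}^{s}p_i^{t_i-r_i}$ and substituting $u=qv$ rearranges the equation into
$$x^{2}-q(q-1)\,v^{2}=1,$$
exhibiting $(x,v)$ as a positive integer solution of the Pell equation $X^{2}-DY^{2}=1$ with $D=q(q-1)$.

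Next I would pin down the fundamental solution of this Pell equation. Since each $p_i\ge 3$ and each $t_i-r_i\ge 1$, we have $q\ge 3$, and then $(q-1)^{2}<q(q-1)<q^{2}$ shows $D=q(q-1)$ is a positive nonsquare integer. A direct computation gives $(2q-1)^{2}-q(q-1)\cdot 2^{2}=1$, while $Y=1$ fails because $q^{2}-q+1$ again lies strictly between $(q-1)^{2}$ and $q^{2}$. Hence the fundamental solution of $X^{2}-q(q-1)Y^{2}=1$ is $(X_{1},Y_{1})=(2q-1,2)$.

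The decisive step is to verify the hypothesis of St\"ormer's theorem (Theorem~\ref{Thm:3.1}) for the solution $(x,v)$. The primes dividing $v$ are $2$ and the $p_i$. Each $p_i$ divides $q$, hence divides $D$, because $t_i>r_i$ forces $p_i\mid\prod_{j}p_j^{t_j-r_j}\mid q$. Moreover, exactly one of $q$ and $q-1$ is even, so $2\mid D$ as well. St\"ormer's theorem therefore forces $(x,v)$ to coincide with the fundamental solution $(2q-1,2)$; in particular, $v=2$.

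Finally, the contradiction is immediate: $v=2^{b}\prod_{i=1}^{s}p_i^{r_i}\ge 2\cdot 3=6$, since $b\ge 1$, each $r_i\ge 1$, and each $p_i$ is an odd prime (hence $p_i\ge 3$). This contradicts $v=2$, and so the equation has no solutions. I expect the only genuine insight required is the substitution $u=qv$ that exposes the Pell structure; once that is in hand, identifying the fundamental solution as $(2q-1,2)$ and verifying St\"ormer's divisibility hypothesis are both routine, and Lemma~\ref{lem:2.4} is not even needed in this approach.
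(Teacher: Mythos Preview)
Your argument is correct and follows essentially the same route the paper attributes to Cao: rewrite the equation as the Pell equation $X^{2}-q(q-1)Y^{2}=1$ with $q=2^{a-b}\prod p_i^{t_i-r_i}$, identify the fundamental solution as $(2q-1,2)$, and apply St\"ormer's theorem (Theorem~\ref{Thm:3.1}) to force $v=2$, contradicting $v=2^{b}\prod p_i^{r_i}\ge 6$. The only cosmetic difference is that you verify the fundamental solution directly by checking $Y=1$ fails and $Y=2$ works, whereas the cited argument obtains it via Lemma~\ref{lem:2.4} from the obvious minimal solution $(1,1)$ of $qX^{2}-(q-1)Y^{2}=1$; this is a harmless shortcut, not a different method.
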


  For $s=1$, Theorem \ref{thm:4.9} shows that Ma's conjecture (A) is true. In 1996, M. Le and Q. Xiang \cite{lexiang} proved also that Theorem \ref{thm:4.9} holds for the special case $s=1.$ Later,
  Y. D. Guo \cite{guo01} obtained the following theorem.
  \begin{theorem}{\rm (\cite{guo01})} \label{thm:4.10} If $k$  is an odd integer $>1$ and $2n>m>n,$ then the Diophantine equation
  $$x^2=2^{2a}k^{2m}-2^{2a}k^{m+n}+1$$
 has no solutions with $x,k,a,b,m,n\in\mathbb{N}.$
 \end{theorem}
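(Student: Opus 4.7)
The plan is to transform the equation into a Pell equation and then apply St\"ormer's theorem and its Mei-type refinements to constrain the solutions so tightly that the hypothesis $2n>m>n$ produces a contradiction. Setting $L = m+n$ and $d = m-n$, the equation rearranges to
\[
x^{2} - 1 \;=\; 2^{2a}\, k^{L}\, (k^{d}-1).
\]
Note that $L$ and $d$ share the same parity, and $2n>m>n$ gives $n>d\geq 1$, whence $L>2d$. Since $k$ is odd, $\gcd(k,k^{d}-1)=1$. I then split on the parity of $L$.

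\emph{Case 1: $L$ odd.} Writing $L=2s+1$, the equation becomes $x^{2} - k(k^{d}-1)\,(2^{a}k^{s})^{2}=1$, a Pell equation $X^{2}-DY^{2}=1$ with $D=k(k^{d}-1)$ and $Y=2^{a}k^{s}$. Since $2\mid k^{d}-1$ and every odd prime of $Y$ divides $k\mid D$, \emph{every} prime divisor of $Y$ divides $D$, so St\"ormer's Theorem~\ref{Thm:3.1} forces $(x,Y)$ to equal the fundamental solution. A direct continued-fraction calculation identifies this fundamental solution as $(2k^{d}-1,\,2k^{(d-1)/2})$. Equating $2^{a}k^{s}=2k^{(d-1)/2}$ gives $a=1$ and $s=(d-1)/2$; combined with $s=(L-1)/2$, this yields $L=d$, i.e.\ $n=0$, contradicting $n\in\mathbb{N}$.

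\emph{Case 2: $L$ even.} Writing $L=2s$, we obtain $x^{2}-(k^{d}-1)\,(2^{a}k^{s})^{2}=1$ with $D=k^{d}-1=(k^{d/2})^{2}-1$, whose fundamental solution is $(k^{d/2},1)$. St\"ormer no longer applies directly, because the odd primes of $Y$ (the primes of $k$) are coprime to $D$. I instead invoke the Mei-type refinements (Theorems~\ref{Thm:3.5} and~\ref{thm:3.6} together with extensions to more ``extra'' primes) to bound the exponent $N$ for which $X_{N}+Y_{N}\sqrt{D}=(k^{d/2}+\sqrt{D})^{N}$ produces our solution. Using the recurrence $Y_{N+1}=2k^{d/2}Y_{N}-Y_{N-1}$, $Y_{0}=0$, $Y_{1}=1$, I rule out each candidate $N$: the case $N=1$ gives $Y=1$, forcing $a=s=0$ (invalid); $N=2$ gives $Y=2k^{d/2}$, forcing $s=d/2$ which contradicts $s>d$; for odd $N\geq 3$, $Y_{N}$ is odd and satisfies $Y_{N}\equiv\pm 1\pmod{k}$, so $\gcd(Y_{N},2k)=1$ and $Y_{N}=2^{a}k^{s}>1$ is impossible; for even $N\geq 4$, the quotient $Y_{N}/(2k^{d/2})$ contains a prime factor outside $\{2\}\cup\{p\mid k\}$ (for instance $Y_{4}/(2k^{d/2})=2(2k^{d}-1)$), precluding $Y_{N}=2^{a}k^{s}$.

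The main obstacle is Case~2 when $k$ has many distinct prime factors, since the stated Mei-type refinements cover only one or two extra primes. Handling the general case requires extending these refinements to arbitrarily many extra primes, which I would accomplish either by iterating the Mei-type argument (stripping one prime of $k$ at a time and reducing inductively to fewer extra primes) or by invoking a Bilu--Hanrot--Voutier-type primitive prime divisor theorem for the Lehmer sequence $\{Y_{N}\}$, ensuring that every $Y_{N}$ with $N\geq 3$ admits a prime factor outside $\{2\}\cup\{p : p\mid k\}$ and thereby ruling out $Y_{N}=2^{a}k^{s}$.
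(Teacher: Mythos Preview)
Your parity split on $L=m+n$ is the source of the trouble in Case~2, and it is unnecessary. Since $m>n$, you can always factor $k^{m+n}=k^{m-n}\cdot k^{2n}$, so that
\[
x^{2}-k^{d}(k^{d}-1)\,(2^{a}k^{n})^{2}=1,\qquad d=m-n,
\]
i.e.\ $D=k^{d}(k^{d}-1)$ and $Y=2^{a}k^{n}$, regardless of parity. Now \emph{every} prime of $Y$ divides $D$: the prime $2$ divides $k^{d}-1$ (since $k$ is odd), and every prime of $k$ divides $k^{d}$. St\"ormer's Theorem~\ref{Thm:3.1} therefore forces $(x,Y)$ to be the fundamental solution. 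Since $(1,1)$ is the minimal solution of $k^{d}X^{2}-(k^{d}-1)Y^{2}=1$, Lemma~\ref{lem:2.4} gives the fundamental solution of $X^{2}-DY^{2}=1$ as $(2k^{d}-1,2)$. But $Y=2^{a}k^{n}\geq 2k>2$ (here $k>1$ and $n\geq 1$), a contradiction.

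This is exactly the approach the paper indicates for Theorems~\ref{thm:4.9} and~\ref{thm:4.11} (``using Lemma~\ref{lem:2.4} and Theorem~\ref{Thm:3.1}'') and carries out in detail for Theorems~\ref{thm:4.13} and~\ref{thm:4.14}: set $l=p^{a-b}k_1^{t_1-r_1}\cdots$, $D=l(ly^{2}-\delta)$, $z=p^{b}k_1^{r_1}\cdots$, and apply St\"ormer. Your Case~1 is precisely this argument when $L$ is odd (you pull out one factor of $k$, which happens to equal $k^{d}$ only when $d=1$, but St\"ormer still applies). In Case~2, however, by absorbing all of $k^{L}$ into $Y$ you discard the factor $k^{d}$ from $D$; this turns the primes of $k$ into ``extra'' primes coprime to $D$ and pushes you toward Mei-type refinements that, as you acknowledge, are not available for arbitrarily many primes. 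The gap is real, and the fix is not to extend the Mei theorems or invoke primitive divisors, but simply to choose the right Pell equation from the start.
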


 In 2000, using Lemma \ref{lem:2.4} and Theorem \ref{Thm:3.1}, Z.Cao and A.Grytczuk get a more general result. In fact, they proved the following result.
\begin{theorem}{\rm (\cite{cao02})} \label{thm:4.11} If $t_i>r_i\; (i=1,2,\cdots,s),$ (i) $ a>b$  or (ii) $a=b$ and $y$ is odd, then the Diophantine equation
  $$x^2=2^{2a}k_1^{2t_1}\cdots k_s^{2t_s}y^2-2^{a+b}k_1^{t_1+r_1}\cdots k_s^{t_s+r_s}\delta+1,\delta \in \{-1,1\}$$
 has only the solution
 $$x=2^{a+b-1}k_1^{t_1+r_1}\cdots k_s^{t_s+r_s}-\delta,\; y=2^{b-1}k_1^{r_1}\cdots k_s^{r_s}$$
 with $x,y,a,b,k_i,t_i,r_i\in\mathbb{N},i=1,2,\cdots,s.$
 \end{theorem}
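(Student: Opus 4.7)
The plan is to write $K = \prod_{i=1}^{s} k_i^{t_i}$, $L = \prod_{i=1}^{s}k_i^{r_i}$, and $M = K/L = \prod_{i=1}^{s}k_i^{t_i-r_i}$. Because $t_i > r_i$ and each $k_i \geq 3$, $M \geq 3$ and every prime divisor of $L$ also divides $M$, so $\gcd(KL,\,2^{a-b}My^2 - \delta) = 1$. The identity $2^{2a}K^2 = (2^aK)^2$ lets me rewrite the equation as the difference of squares
$$(2^aKy)^2 - x^2 \;=\; 2^{a+b}KL\delta - 1,$$
and I set $A \leq B$ to be the two positive factors of $|2^{a+b}KL\delta - 1|$ coming from $|2^aKy \mp x|$. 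The trivial solution corresponds exactly to $A = 1$, $B = |2^{a+b}KL\delta - 1|$, in which case the linear relation ($A + B = 2^{a+1}Ky$ when $\delta = +1$, $B - A = 2^{a+1}Ky$ when $\delta = -1$) yields $y = 2^{b-1}L$ and $x = 2^{a+b-1}KL - \delta$; the whole theorem therefore reduces to proving $A = 1$.

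In both sign cases, reducing $AB$ and $A \pm B$ modulo $2^{a+1}K$ (using $b \geq 1$) gives $A^2 \equiv 1 \pmod{2^{a+1}K}$. Since $A$ is odd, $\gcd(A-1, A+1) = 2$, and $2^{a+1}K \mid (A-1)(A+1)$ yields a coprime factorization
$$A - \varepsilon = 2^a K_I \gamma, \qquad A + \varepsilon = 2 K_J \delta',$$
for some $\varepsilon \in \{\pm 1\}$, a partition $K = K_I K_J$ with $\gcd(K_I, K_J) = 1$, and coprime positive integers $\gamma, \delta'$ with $\delta'$ odd, subject to the Bezout identity $K_J\delta' - 2^{a-1}K_I\gamma = \varepsilon$. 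Substituting into $AB = |2^{a+b}KL\delta - 1|$ and simplifying yields the master identity
$$yA \;=\; \gamma \delta' + 2^{b-1}L.$$

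In the sub-case $K_I = K$, $K_J = 1$ the Bezout identity forces $\delta' = 2^{a-1}K\gamma + \varepsilon$, and the master identity collapses to
$$2^{a-1}K\gamma(2y - \gamma) + \varepsilon(y - \gamma) = 2^{b-1}L.$$
Because $K \geq 3L$ and $a \geq b$ imply $2^{a-1}K > 2^{b-1}L$, a short sign analysis on $2y - \gamma$ and $y - \gamma$ rules out every $\gamma \geq 1$, leaving only $\gamma = 0$ (and $\varepsilon = +1$), so $A = 1$. For each remaining partition with $K_J > 1$ the lower bound $A \geq 2^a K_I - 1$ combined with the upper bound $A \leq \sqrt{|2^{a+b}KL\delta - 1|}$ restricts the admissible partitions, and eliminating $\gamma, \delta'$ between the Bezout and master identities produces an auxiliary Pell-type equation whose ``$y$-coordinate'' has every prime divisor dividing the ``$D$-coordinate.'' Applying Theorem~\ref{Thm:3.1} together with the multiplicative relations of Lemma~\ref{lem:2.4} between the minimal solutions of equations~\eqref{eq:2}--\eqref{eq:6} forces the auxiliary solution to be the fundamental one, which unwinds back to $K_J = 1$ and $A = 1$.

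The main obstacle I expect is this last reduction for $K_J > 1$: the size inequality alone leaves several partitions admissible, and the precise form of the auxiliary Pell equation has to be extracted so that the hypothesis of Theorem~\ref{Thm:3.1} (every prime of the $y$-coordinate divides the discriminant) is automatic. A secondary difficulty is the parity case distinction between hypotheses (i) and (ii): when $a = b$ and $y$ is odd, the quantity $N := 2^{a-b}My^2 - \delta = My^2 - \delta$ is even, so an extra $2$-adic contribution enters the Bezout factorization and has to be tracked carefully before the inequality $2^{a-1}K > 2^{b-1}L$ closes the argument.
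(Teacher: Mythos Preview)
The paper does not give a self-contained proof of Theorem~\ref{thm:4.11}; it cites \cite{cao02} and states only that the argument uses Lemma~\ref{lem:2.4} and Theorem~\ref{Thm:3.1}. The template is the proof of Theorem~\ref{thm:4.13} a few lines later. In your notation, set $l=2^{a-b}M$, $D=l(ly^{2}-\delta)$ and $z=2^{b}L$; the hypothesis becomes the Pell equation $x^{2}-Dz^{2}=1$. Since $(y,1)$ is the minimal solution of $lX^{2}-(ly^{2}-\delta)Y^{2}=\delta$, Lemma~\ref{lem:2.4} gives that $(2ly^{2}-\delta,\,2y)$ is the fundamental solution of $X^{2}-DY^{2}=1$. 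Every prime dividing $z=2^{b}L$ divides $D$: primes of $L$ divide $M\mid l\mid D$, and $2\mid D$ either because $a>b$ (so $2\mid l$) or because $a=b$ with $y$ odd (so $ly^{2}-\delta$ is even). St\"ormer's Theorem~\ref{Thm:3.1} then forces $(x,z)$ to equal the fundamental solution, i.e.\ $z=2y$, which is exactly $y=2^{b-1}L$ and $x=2^{a+b-1}KL-\delta$. That is the whole proof.

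Your route through the factorisation $(2^{a}Ky)^{2}-x^{2}=2^{a+b}KL\delta-1$, the congruence $A^{2}\equiv 1\pmod{2^{a+1}K}$, and the master identity $yA=\gamma\delta'+2^{b-1}L$ is correct as far as it goes, and the sub-case $K_{I}=K$, $K_{J}=1$ can indeed be closed by the size inequality. But the case $K_{J}>1$ is not actually carried out: you assert that an ``auxiliary Pell-type equation'' arises whose $y$-coordinate automatically satisfies the St\"ormer hypothesis, yet you neither write that equation down nor verify the divisibility condition. Since by your own plan this residual case is to be settled via Theorem~\ref{Thm:3.1} and Lemma~\ref{lem:2.4} anyway, the difference-of-squares machinery is a detour: the same two ingredients applied directly to the single Pell equation $x^{2}-Dz^{2}=1$ dispose of all cases in one stroke, with the parity hypotheses (i) and (ii) entering only to guarantee $2\mid D$. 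A minor side remark: you assume $k_{i}\ge 3$, which the statement does not; the intended reading (cf.\ Theorems~\ref{thm:4.12} and~\ref{thm:4.13}) is $k_{i}>1$ odd, and the argument above needs only $M>1$ in case~(ii).
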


 In 2002, using Lemma \ref{lem:2.4} and Theorems \ref{Thm:3.1}, \ref{Thm:3.5}, X. Dong and Z. Cao \cite{dongcao} showed the following result.
\begin{theorem}{\rm (\cite{dongcao})}\label{thm:4.12} Let $t_i>r_i\; (i=1,2,\cdots,s),\; a\geq b$ and $k_1,k_2,\cdots k_s>1$.\\
(i) Assume that $p=2.$ Then the Diophantine equation
\begin{equation}\label{eq:51}
x^2=p^{2a}k_1^{2t_1}\cdots k_s^{2t_s}y^2-p^{a+b}k_1^{t_1+r_1}\cdots
k_s^{t_s+r_s}\delta+1,\delta \in \{-1,1\},
\end{equation}
where
$$x,y,a,b,k_i,t_i\in\mathbb{N},\; i=1,2,\cdots,s,\; r_i\in \mathbb{N}_0,\; i=1,2,\cdots,s,$$
and $p$ is a prime, has only the solution
$$x=2^{a+b-1}k_1^{t_1+r_1}\cdots
k_s^{t_s+r_s}-\delta,\; y=2^{b-1}k_1^{r_1}\cdots
 k_s^{r_s}.$$\\

(ii) Assume that $p>2$. Then the only solutions to equation \eqref{eq:51} are:

(A) If there is a $j$ $(1\leq j \leq s)$ such that $2|k_j^{r_j}$,  then
$$x=\frac{1}{2}p^{a+b}k_1^{t_1+r_1}\cdots
k_s^{t_s+r_s}-\delta,y=\frac{1}{2}p^bk_1^{r_1}\cdots k_s^{r_s}.$$

(B) If there is a $j$ $(1\leq j \leq s)$ such that $4|k_j^{r_j}$ and $p^b=\frac{1}{8}k_1^{r_1}\cdots k_s^{r_s}-\delta,$ then
$$x=2p^{2b}-1,\; y=\frac{1}{4}k_1^{r_1}\cdots k_s^{r_s},\; a=b.$$

(C) If there is a $j$ $(1\leq j \leq s)$ such that
$$p^b=3^{r_j}-2=3^{-2r_j}k_1^{t_1+r_1}\cdots
k_s^{t_s+r_s}+1,\; 2\nmid r_j>1$$ and $3|k_j$, then
$$x=\frac{1}{2}(p^{b}+1)((p^b+1)^2-3),\; y=\frac{1}{2}3^{-r_j}k_1^{r_1}\cdots k_s^{r_s},a=b,\; \delta=-1.$$
 \end{theorem}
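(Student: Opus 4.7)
The plan is to extend the factor-and-reduce strategy used for the earlier Theorems \ref{thm:4.9}--\ref{thm:4.11}, of which the present statement is an ``extended version''. Setting
\[ N \;=\; p^{a}\prod_{i=1}^{s}k_{i}^{t_{i}},\qquad M \;=\; p^{b}\prod_{i=1}^{s}k_{i}^{r_{i}},\]
equation \eqref{eq:51} rewrites as
\[ (x-1)(x+1) \;=\; N\bigl(Ny^{2}-M\delta\bigr). \]
Since $\gcd(x-1,x+1)\in\{1,2\}$, and since the large prime-power factors of $N$ together with those of the cofactor $Ny^{2}-M\delta$ must concentrate essentially in one of the two factors $x\pm 1$, one obtains a parameterisation $x-\eta=2^{\alpha}Ke^{2}$, $x+\eta=2^{\beta}Lf^{2}$, with $\eta\in\{\pm 1\}$, $KL$ a divisor of $N$, and $ef$ a divisor of $y$. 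Subtracting these two relations yields an auxiliary Pell-type equation
\[ Ku^{2}-Lv^{2} \;=\; C,\qquad C\in\{1,2,4\},\]
to which Lemma \ref{lem:2.4} and the St\"ormer-type results of Section \ref{sec:3} apply.

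I would then split into three cases corresponding to parts (i) and (ii)(A)--(C).

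\textbf{Case $p=2$.} Here $N$ and $M$ are both even with $v_{2}(N)=a\geq v_{2}(M)=b\geq 1$. A 2-adic valuation analysis of $(x-1)(x+1)$ forces, up to the sign $\eta$, the relations $x+\delta=Ny$ and $M=2y$. Substituting back produces $x=2^{a+b-1}\prod k_{i}^{t_{i}+r_{i}}-\delta$ and $y=2^{b-1}\prod k_{i}^{r_{i}}$, which is (i).

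\textbf{Case $p>2$ with some $k_{j}$ even.} The same 2-adic pigeonholing argument (with $k_{j}$ playing the role previously occupied by $p=2$) gives the family $x=\tfrac{1}{2}NM-\delta$ and $y=\tfrac{1}{2}M$, i.e.\ case (A).

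\textbf{Case $p>2$ with every $k_{j}$ odd.} Now $N$ is odd and the alternatives $\gcd(x-1,x+1)=1$ or $2$ lead to an auxiliary equation of type $Ku^{2}-Lv^{2}=2$ (respectively $=4$) with $KL\mid N$. Applying Theorem \ref{Thm:3.3} (resp.\ Theorem \ref{Thm:3.4}) together with Remark \ref{rmk:3.1} forces $(u,v)$ either to be the minimal positive solution (yielding no new $x,y$ beyond the trivial family) or to be the cube $\varepsilon^{3}$ of the minimal solution, which is precisely the sporadic exception singled out in those theorems. The cube exception in Theorem \ref{Thm:3.4}, pulled back through Lemma \ref{lem:2.4}(iii)--(vi), produces case (B) under the relation $p^{b}=\tfrac{1}{8}\prod k_{i}^{r_{i}}-\delta$ (which forces $4\mid k_{j}^{r_{j}}$ and $a=b$). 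The cube exception in Theorem \ref{Thm:3.3} produces case (C) under $3\mid k_{j}$, $2\nmid r_{j}>1$, $a=b$, $\delta=-1$ and $p^{b}=3^{r_{j}}-2$; the explicit shape $x=\tfrac{1}{2}(p^{b}+1)((p^{b}+1)^{2}-3)$ then comes out of expanding the cube of the minimal solution.

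The main obstacle is the accurate matching of the two distinct cube-exceptions of Theorems \ref{Thm:3.3}--\ref{Thm:3.4} with the precise arithmetic conditions characterising (B) and (C); in particular, one must show by case-work on which index $j$ carries the exceptional divisibility that no other sporadic Pell solution can sneak in. A secondary technical point is verifying that the equality $a=b$ appearing in both (B) and (C) is forced by the hypothesis $t_{i}>r_{i}$ combined with the auxiliary Pell structure, i.e.\ that the cube-exception cannot occur when $a>b$.
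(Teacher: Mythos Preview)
The paper does not actually give a proof of Theorem~\ref{thm:4.12}: it is quoted from \cite{dongcao}, and the paper only records that it is obtained ``using Lemma~\ref{lem:2.4} and Theorems~\ref{Thm:3.1}, \ref{Thm:3.5}''. The template for that argument is, however, written out in full in the proof of the immediately following Theorem~\ref{thm:4.13}: one sets
\[
l=p^{a-b}\prod_i k_i^{t_i-r_i},\qquad D=l(ly^{2}-\delta),\qquad z=p^{b}\prod_i k_i^{r_i},
\]
so that \eqref{eq:51} becomes the Pell equation $x^{2}-Dz^{2}=1$; Lemma~\ref{lem:2.4} identifies the fundamental solution as $(2ly^{2}-\delta,\,2y)$; and since every prime of $z$ divides $D$ except possibly $p$ (and then only when $a=b$), Theorems~\ref{Thm:3.1} and~\ref{Thm:3.5} force $x+z\sqrt{D}\in\{\varepsilon,\varepsilon^{2},\varepsilon^{3}\}$. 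These three powers produce exactly (i)/(A), (B), and (C) respectively.

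Your route---factoring $(x-1)(x+1)$ directly and aiming for auxiliary equations $Ku^{2}-Lv^{2}=C$ with $C\in\{2,4\}$---is not the argument indicated, and as written it has a concrete error. You attribute case~(B) to ``the cube exception in Theorem~\ref{Thm:3.4}'', but Theorem~\ref{Thm:3.4} has \emph{no} cube exception: for $kx^{2}-ly^{2}=4$ the solution is always minimal apart from the single sporadic fifth power $(k,l,x,y)=(5,1,5,11)$. Likewise the $\varepsilon^{3}$ alternative in Theorem~\ref{Thm:3.3} is not what generates~(B). In the Dong--Cao scheme, (B) comes from the \emph{square} $\varepsilon^{2}$ of the fundamental solution of $X^{2}-DZ^{2}=1$ (this is where the factor $4$ in $y=\tfrac14\prod k_i^{r_i}$ and the shape $x=2p^{2b}-1=2x_1^{2}-1$ originate), while (C) comes from the cube $\varepsilon^{3}$ together with the constraint $z/y=(2ly^{2}-\delta)^{2}-1=3^{t}p^{b}$ of Remark~\ref{rmk:3.2}. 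Your parametrisation $x\mp1=2^{\alpha}Ke^{2}$, $x\pm1=2^{\beta}Lf^{2}$ is also not justified: the cofactor $Ny^{2}-M\delta$ has no a~priori square structure, so the ``concentration'' claim needs exactly the Pell input (Lemma~\ref{lem:2.4}) that the cited proof uses up front.

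In short: realign your argument with the $X^{2}-DZ^{2}=1$ reduction and invoke Theorems~\ref{Thm:3.1} and~\ref{Thm:3.5} rather than Theorems~\ref{Thm:3.3}--\ref{Thm:3.4}; then the three cases $\varepsilon,\varepsilon^{2},\varepsilon^{3}$ map cleanly onto (i)/(A), (B), (C), and the condition $a=b$ in (B) and (C) is automatic because Theorem~\ref{Thm:3.5} only comes into play when $p\nmid D$, i.e.\ when $a=b$.
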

Further, we consider the Diophantine equation
\begin{equation}\label{eq:52}
x^2=p^{2a}k_1^{2t_1}\cdots k_s^{2t_s}y^2-p^{a+b}k_1^{t_1+r_1}\cdots
k_s^{t_s+r_s}\delta+1,\; \delta \in \{-2,-4,2,4\},
\end{equation}
where
$$x,y,a,b,k_i,t_i\in\mathbb{N},\; i=1,2,\cdots,s,\; r_i\in
\mathbb{N}_0,\; i=1,2,\cdots,s,\; 2\nmid y,$$ and $p$ is an odd prime. By Lemma \ref{lem:2.4} and Theorems \ref{Thm:3.1}, \ref{Thm:3.5}, \ref{thm:4.2}, we get the following result.
\begin{theorem} \label{thm:4.13} Let $t_i>r_i\; (i=1,2,\cdots,s),\; a\geq b$ and $k_1,k_2,\cdots k_s>1$ be odd.\\

(i) Assume that  $\delta=-2$ or $\delta=2$,  then the only solutions to equation \eqref{eq:52} are:

(A) If $p|y$, then
$$x=p^{a+b}k_1^{t_1+r_1}\cdots k_s^{t_s+r_s}-\frac{\delta}{2},\; y=p^bk_1^{r_1}\cdots k_s^{r_s}.$$

(B) If $\delta=2$ and
$$p^b=3^{t}+2=2\cdot3^{-2t}k_1^{t_1+r_1}\cdots
k_s^{t_s+r_s}-1,\; 2\nmid t,$$
then
$$x=\frac{1}{2}(p^{b}-1)((p^b-1)^2-3),\; y=3^{-t}k_1^{r_1}\cdots k_s^{r_s},\; a=b.$$

(C) If $\delta=-2$ and
$$p^b=3^{t}-2=2\cdot3^{-2t}k_1^{t_1+r_1}\cdots
k_s^{t_s+r_s}+1,\; 2|t,$$  then
$$x=\frac{1}{2}(p^{b}+1)((p^b+1)^2-3),\; y=3^{-t}k_1^{r_1}\cdots k_s^{r_s},\; a=b.$$

(ii) Assume that $\delta=-4$ or $\delta=4$, then the Diophantine
equation \eqref{eq:52} has no positive integer solutions.
\end{theorem}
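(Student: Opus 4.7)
Set $A := p^a k_1^{t_1}\cdots k_s^{t_s}$ and $B := p^b k_1^{r_1}\cdots k_s^{r_s}$, and write $m := A/B = p^{a-b}k_1^{t_1-r_1}\cdots k_s^{t_s-r_s}$, so that \eqref{eq:52} rewrites as $x^2-A^2y^2 = 1-AB\delta$. For the easier part (ii) I plan a direct residue argument modulo $8$: since $p$ and every $k_i$ are odd and $y$ is odd, $Ay$ is odd, so $(Ay)^2\equiv 1\pmod 8$; and $AB\delta$ is a multiple of $4$ which, because $AB$ is odd, reduces to $4\pmod 8$ whether $\delta=4$ or $\delta=-4$. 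Hence $x^2\equiv 1-4+1\equiv 6\pmod 8$, impossible since $6$ is not a quadratic residue modulo $8$.

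For part (i), with $\delta=\pm 2$, I would first record elementary consequences: every prime divisor of $AB$ is coprime to $y$ (otherwise such a prime would divide $1$); $x$ is even and hence $\gcd(x-1,x+1)=1$; and $B^2\mid x^2-1$, forcing $x\equiv\pm 1\pmod{B^2}$. Rewriting the equation as
\[
x^2 - 1 \;=\; B^2 m\bigl(my^2-\delta\bigr),
\]
and exploiting the factorization $(Ay-x)(Ay+x)=AB\delta-1$ for $\delta=2$ (with signs reversed for $\delta=-2$), positive integer solutions $(x,y)$ with $y$ odd are in bijection with pairs $(u,v)$ of coprime positive odd divisors of $|2AB-\delta|$ satisfying $u\le v$, $u+v=2Ay$, and $v-u=2x$; the integrality of $y$ is equivalent to the arithmetic condition $u^2\equiv 1\pmod A$. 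Since $A=\prod q^{e_q}$ is odd, this splits as a sign choice $u\equiv\pm 1\pmod{q^{e_q}}$ at each prime $q\mid A$, so at most $2^{\omega(A)}$ residue classes occur, and the class $u\equiv 1\pmod A$ with $u=1$ yields $v=|2AB-\delta|$, $y=B$, $x=AB-\delta/2$, which is precisely the generic solution (A).

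To handle the remaining residue classes I would encode each candidate $(u,v)$ as a positive integer solution of an auxiliary Pell equation $X^2-DY^2=1$ built from $m$ and $B$, arranged so that the primes of $y$ outside $\{p,k_1,\ldots,k_s\}$ become exactly the primes of $Y$ outside $D$. Theorem~\ref{thm:4.2} guarantees solvability of the Pell-type equations $kX^2-\ell Y^2=4$ (with $k\ell\mid D$) that intervene, and Lemma~\ref{lem:2.4} furnishes the cube identity $\varepsilon_{2}=(\varepsilon_{5}/2)^{3}$ relating their minimal solutions. Theorem~\ref{Thm:3.5} (or Theorem~\ref{thm:3.6} when a second extra prime appears) then forces the auxiliary solution to be $\varepsilon^n$ for some $n\in\{1,2,3\}$. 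The case $n=1$ returns (A); the case $n=2$ produces a $y$ that is even, contradicting $2\nmid y$; and the case $n=3$, after explicit expansion of the cube via Lemma~\ref{lem:2.4}, yields exactly the two sporadic families (B), (C), with the identities $p^b=3^\tau\pm 2$ (with the prescribed parity of $\tau$) and the constraint $a=b$ emerging from matching the cube's $X$-component against $\tfrac12(p^b\mp 1)\bigl((p^b\mp 1)^2-3\bigr)$.

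The hardest step I anticipate is pinpointing the correct auxiliary Pell equation and verifying that the $y$ produced by the bijection above satisfies the prime-divisor hypothesis of Theorem~\ref{Thm:3.5} (or~\ref{thm:3.6}); this is essentially the same manoeuvre used in the proof of Theorem~\ref{thm:4.12} but with the even datum $\delta\in\{\pm 2\}$ in place of $\delta\in\{\pm 1\}$. Once the correspondence is secured, the analysis of $n\in\{1,2,3\}$ is a finite, explicit computation via Lemma~\ref{lem:2.4}, and the sporadic identifications in (B), (C) then follow from the arithmetic of $p^b\pm 2$ along the lines of Theorem~\ref{thm:4.12}(ii)(C).
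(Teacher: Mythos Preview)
Your mod~$8$ argument for part~(ii) is correct and considerably simpler than the paper's proof, which instead invokes Theorem~\ref{thm:4.2} on the simultaneous solvability of $kX^2-\ell Y^2=1,2,4$ to derive a contradiction from the factorization $(x-1)(x+1)=Dz^2$.

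For part~(i), however, your plan has both local errors and a genuine gap. The ``elementary consequences'' are false: solution~(A) itself has $y=B$, so every prime of $AB$ \emph{does} divide $y$; and $B^2\mid x^2-1$ gives only $x^2\equiv 1\pmod{B^2}$, which for odd $B$ with several prime factors has $2^{\omega(B)}$ residue classes, not just $\pm 1$. The divisor-pair bijection also contains a slip ($|2AB-\delta|$ should be $|AB\delta-1|$). More seriously, you never actually specify the auxiliary Pell equation, and your bijection does not place the problem in a form where the hypothesis of Theorem~\ref{Thm:3.5} (that all but one prime of $Y$ divide $D$) is visible; you yourself flag this as the hardest step, and it is indeed where the argument is incomplete.

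The paper's key move, which you are missing, is to treat $y$ as \emph{fixed} and absorb it into the Pell discriminant: set $l=m=A/B$, $D=l(ly^2-\delta)$, and $z=B$. Then \eqref{eq:52} says exactly that $(x,z)$ solves $X^2-DY^2=1$. Because $(y,1)$ is the minimal solution of $lX^2-(ly^2-\delta)Y^2=\delta$ with $\delta=\pm 2$, Lemma~\ref{lem:2.4} gives the fundamental solution of $X^2-DY^2=1$ explicitly as $ly^2-\delta/2+y\sqrt{D}$. Every prime of $z=B$ except possibly $p$ divides $l\mid D$ (since $t_i>r_i$), so Theorems~\ref{Thm:3.1} and~\ref{Thm:3.5} force $x+z\sqrt{D}=\varepsilon^n$ with $n\in\{1,2,3\}$; the three cases give~(A), a parity contradiction ($z$ is odd but the $n=2$ case makes it even), and~(B)/(C) respectively. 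Once the Pell equation is set up this way, no divisor bijection or further ``encoding'' is needed.
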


\begin{proof} Let $$l=p^{a-b}k_1^{t_1-r_1}\cdots
k_s^{t_s-r_s},\; D=l(ly^2-\delta),\; z=p^bk_1^{r_1}\cdots k_s^{r_s}.$$
From equation \eqref{eq:52}, one can see that $(x,z)$ is a solution of the Diophantine equation
\begin{equation}\label{eq:53}
X^2-DY^2=1.
\end{equation}
(i) Since $(y,1)$ is the minimal positive solution of Diophantine
equation
$$lX^2-(ly^2-\delta)Y^2=\delta,$$
hence
$(y\sqrt{l}+\sqrt{l^2-\delta})^2/2 = ly^2-(\delta/2)+y\sqrt{l(ly^2-\delta)}$
is the fundamental solution of equation \eqref{eq:53}. By Theorem \ref{Thm:3.1} and Theorem \ref{Thm:3.5}, we have
\begin{equation}\label{eq:54}
x=ly^2-(\delta/2),\,z=y,
\end{equation}
or
\begin{equation}\label{eq:55}
x=(ly^2-(\delta/2))^2+l(ly^2-\delta)y^2,\,z=2y(ly^2-(\delta/2)),
\end{equation}
or
\begin{equation}\label{eq:56}
x=(ly^2-\frac{\delta}{2})^3+3(ly^2-\frac{\delta}{2})l(ly^2-\delta)y^2,\, z/y=(2ly^2-\delta)^2-1=3^tp^b,a=b.
\end{equation}
It is easy to see that equation \eqref{eq:55} is impossible since $z$ is an odd integer. From equation \eqref{eq:54}, we have $p|y$ and
$$x=p^{a+b}k_1^{t_1+r_1}\cdots
k_s^{t_s+r_s}-\frac{\delta}{2},y=p^bk_1^{r_1}\cdots k_s^{r_s}.$$
If equation \eqref{eq:56} is true, then $3^ty=k_1^{r_1}\cdots k_s^{r_s}$. It follows that $y=3^{-t}k_1^{r_1}\cdots k_s^{r_s}$. If $\delta=2$, again from equation \eqref{eq:56}, we obtain $2ly^2-3=3^t,2ly^2-1=p^b$. We get $2\nmid t$ by considerations by modulo $4$. Therefore we have
$$p^b=3^{t}+2=2\cdot3^{-2t}k_1^{t_1+r_1}\cdots
k_s^{t_s+r_s}-1,\; x=\frac{1}{2}(p^{b}-1)((p^b-1)^2-3).$$
If $\delta=-2$, also from equation \eqref{eq:56}, we see that $2ly^2+3=3^t,\; 2ly^2+1=p^b$. We
get $2|t$ by considerations by modulo $4$. Therefore we have
$$p^b=3^{t}-2=2\cdot3^{-2t}k_1^{t_1+r_1}\cdots
k_s^{t_s+r_s}+1,\; x=\frac{1}{2}(p^{b}+1)((p^b+1)^2-3).$$
This proves (i) of Theorem.\\

(ii) Since $(y,1)$ is a positive integer solution of Diophantine
equation
$$lX^2-(ly^2-\delta)Y^2=\delta,$$
hence $$u\sqrt{l}+v\sqrt{ly^2-\delta}=((y\sqrt{l}+v\sqrt{ly^2-\delta})/2)^3$$ is a solution of Diophantine equation $lX^2-(ly^2-\delta)Y^2=\pm 1$. On the other hand, if \eqref{eq:52} is true, note that $2\nmid Dz^2$ and from \eqref{eq:53}, we get
$$x+1=D_1z_1^2,x-1=D_2z_2^2,D_1D_2=D,z=z_1z_2.$$
Thus $(z_1,z_2)$ is a solution of $D_1X^2-D_2Y^2=2$, contradicting
Theorem \ref{thm:4.2}. This completes the proof of Theorem \ref{thm:4.13}.
\end{proof}
We now consider the Diophantine equation
\begin{equation}\label{eq:57}
x^2=p^{2a}k_1^{2t_1}\cdots k_s^{2t_s}y^2-p^{a+b}k_1^{t_1+r_1}\cdots
k_s^{t_s+r_s}\delta+4,\; \delta \in \{-4,4\},
\end{equation}
where $$x,y,a,b,k_i,t_i\in\mathbb{N},\; i=1,2,\cdots,s,\; r_i\in
\mathbb{N}_0,\; i=1,2,\cdots,s,\; 2\nmid y,$$
and $p$ is an odd prime. Using Lemma \ref{lem:2.4} and Theorems \ref{Thm:3.4}, \ref{thm:3.9}, we get the following result.
\begin{theorem}\label{thm:4.14} Let $t_i>r_i\; (i=1,2,\cdots,s),\; a\geq b$ and $k_1,k_2,\cdots k_s>1$ be odd.
Then except for $(x,y,p,s,r_1,t_1,k_1,a,b,\delta) = (123,1,11,1,1,2,5,1,1,4)$, the only solutions to equation \eqref{eq:57} are given by:

(A) If $p|y$, then
$$x=p^{a+b}k_1^{t_1+r_1}\cdots
k_s^{t_s+r_s}-\frac{\delta}{2},\; y=p^bk_1^{r_1}\cdots k_s^{r_s}.$$

(B) If $p\nmid y$ and $p^b=k_1^{t_1+r_1}\cdots
k_s^{t_s+r_s}-\frac{\delta}{2}$, then
$$x=p^{2b}-2,\; y=k_1^{r_1}\cdots k_s^{r_s},\; a=b.$$
\end{theorem}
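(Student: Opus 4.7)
Imitating the proof of Theorem~\ref{thm:4.13}, set
\[
l=p^{a-b}\prod_{i=1}^{s}k_i^{t_i-r_i},\qquad z=p^{b}\prod_{i=1}^{s}k_i^{r_i},\qquad D=l(ly^{2}-\delta),
\]
so that $lz^{2}=p^{a+b}\prod_{i}k_i^{t_i+r_i}$ and equation~\eqref{eq:57} rewrites as $x^{2}-Dz^{2}=4$. Since $p$ and each $k_i$ are odd, both $l$ and $z$ are odd; combined with $2\nmid y$ and $\delta=\pm 4$, the integer $D$ is also odd, whence $x$ is odd. The pair $(y,1)$ is plainly the minimal positive solution of $lX^{2}-(ly^{2}-\delta)Y^{2}=\delta$, an instance of~\eqref{eq:5} (after a harmless sign flip when $\delta=-4$); Lemma~\ref{lem:2.4}(6) then identifies the minimal positive solution of $X^{2}-DY^{2}=4$ as $\varepsilon=(ly^{2}-\delta/2)+y\sqrt{D}$.

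\textbf{Case split via the extended St\"ormer theorems.} Every prime of $\prod_{i}k_i^{r_i}$ divides $l$ (as $t_i>r_i$) and hence divides $D$. If $p\mid D$ (equivalently $a>b$, or $a=b$ with $p\mid ly^{2}-\delta$), then every prime divisor of $z$ divides $D$, and the counterpart of Theorem~\ref{Thm:3.4} for $X^{2}-DY^{2}=4$---obtained by combining Theorem~\ref{Thm:3.4} with Lemma~\ref{lem:2.4}(6)---forces $(x,z)=(ly^{2}-\delta/2,\,y)$, so $y=p^{b}\prod_{i}k_i^{r_i}$ and $x=lz^{2}-\delta/2=p^{a+b}\prod_{i}k_i^{t_i+r_i}-\delta/2$, which is case~(A). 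Otherwise $p\nmid D$, and Theorem~\ref{thm:3.9} applies to $(x,z)$ with $p$-part $p^{b}$ and coprime complement $\prod_{i}k_i^{r_i}$, yielding
\[
\frac{x+z\sqrt{D}}{2}\;\in\;\Bigl\{\tfrac{\varepsilon}{2},\ \bigl(\tfrac{\varepsilon}{2}\bigr)^{2},\ \bigl(\tfrac{\varepsilon}{2}\bigr)^{3}\Bigr\}
\]
or else the exceptional triple $(x,z,D)=(123,55,5)$.

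\textbf{Resolving $n=1,2,3$.} The case $n=1$ again gives $(x,z)=(ly^{2}-\delta/2,y)$ and case~(A). For $n=2$, direct multiplication yields $z=(ly^{2}-\delta/2)\,y$; the coprimalities $\gcd(ly^{2}-\delta/2,y)=\gcd(ly^{2}-\delta/2,l)=1$ (both stemming from $\gcd(\delta/2,\text{odd})=1$) combined with the dismissal of the subcase $p\mid y$---which would force $ly^{2}-\delta/2=1$, hence $l=3,\,y=1,\,\delta=4$, contradicting $p\mid y$---deliver $y=\prod_{i}k_i^{r_i}$ and $ly^{2}-\delta/2=p^{b}$. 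This yields $a=b$, $p^{b}=\prod_{i}k_i^{t_i+r_i}-\delta/2$, and substituting $M=ly^{2}=p^{b}+\delta/2$ into $x=M^{2}-\delta M+2$ collapses to $x=p^{2b}-2$, matching case~(B). For $n=3$, a direct computation of $(\varepsilon/2)^{3}$ gives $z=y(M^{2}-\delta M+3)$ with $M=ly^{2}$; for $\delta=\pm 4$ the polynomial factors as $(M\mp 1)(M\mp 3)$, a product of two consecutive even integers (since $M$ is odd) and therefore divisible by $8$. This contradicts $z$ being odd, so $n=3$ is impossible.

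\textbf{The exception and the main obstacles.} The exceptional triple $(x,z,D)=(123,55,5)$ forces $l(ly^{2}-\delta)=5$, solvable only with $l=5,\,y=1,\,\delta=4$; matching $z=55=11\cdot 5$ against $l=\prod_{i}k_i^{t_i-r_i}=5$ then pins down $s=1,\,k_1=5,\,p=11,\,r_1=1,\,t_1=2,\,a=b=1$, which is precisely the exceptional tuple in the theorem. The principal difficulties are (i) the subcase analysis of $n=2$ when $p\mid y$, requiring the explicit coprimality argument that reduces to $ly^{2}-\delta/2=1$, and (ii) the invocation of a St\"ormer-type statement for $X^{2}-DY^{2}=4$ in the range $p\mid D$, which is not stated verbatim in the paper but must be extracted from Theorem~\ref{Thm:3.4} together with Lemma~\ref{lem:2.4}(6).
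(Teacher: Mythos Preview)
Your proof is correct and follows essentially the same route as the paper: reduce \eqref{eq:57} to $X^{2}-DY^{2}=4$ with the explicit minimal solution $\varepsilon=(ly^{2}-\delta/2)+y\sqrt{D}$, then use the extended St\"ormer results (Theorems~\ref{Thm:3.4} and~\ref{thm:3.9}) to pin the exponent $m$ in $(\varepsilon/2)^{m}$ to $\{1,2,3\}$ or the single exceptional triple, and read off cases (A), (B), and the exception. The one genuine variation is your elimination of $m=3$: the paper dispatches it by invoking Proposition~\ref{prop:Lehmer} (cf.\ Remark~\ref{rmk:3.4}), which forces the auxiliary prime to equal $2$, contradicting that $p$ is odd; you instead compute $z=y(M^{2}-\delta M+3)=y(M\mp1)(M\mp3)$ with $M=ly^{2}$ odd and observe that this product of consecutive even integers makes $8\mid z$, contradicting $z$ odd. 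Both arguments work; yours is self-contained, while the paper's is shorter once Remark~\ref{rmk:3.4} is in hand. Your explicit $p\mid D$ versus $p\nmid D$ split and your treatment of the $m=2$, $p\mid y$ subcase (forcing $ly^{2}-\delta/2=1$) fill in details the paper leaves to the phrase ``one can see.''
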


\begin{proof} Let $$l=p^{a-b}k_1^{t_1-r_1}\cdots
k_s^{t_s-r_s},\; D=l(ly^2-\delta),\; z=p^bk_1^{r_1}\cdots k_s^{r_s}.$$
From equation \eqref{eq:57}, one sees that $(x,z)$ is a solution of Diophantine equation
\begin{equation}\label{eq:58}
X^2-DY^2=4.
\end{equation}
Since $(y,1)$ is the minimal solution of Diophantine equation
$$lX^2-(ly^2-\delta)Y^2=\delta,$$
hence
$(y\sqrt{l}+\sqrt{l^2-\delta})^2/2=ly^2-(\delta/2)+y\sqrt{l(ly^2-\delta)}$
 is the fundamental solution of equation \eqref{eq:58} by Lemma \ref{lem:2.4}. By Theorems \ref{Thm:3.4}, \ref{thm:3.9} and Proposition~\ref{prop:Lehmer}, we have
\begin{equation}\label{eq:59}
x=ly^2-(\delta/2),\,z=y,
\end{equation}
or
\begin{equation}\label{eq:60}
x=((ly^2-(\delta/2))^2+l(ly^2-\delta)y^2)/2,\,z=y(ly^2-(\delta/2)),a=b,
\end{equation}
or
\begin{equation}\label{eq:61}
x=123,D=5,p^bk_1^{r_1}\cdots k_s^{r_s}=55,a=b.
\end{equation}

From equation \eqref{eq:59}, we obtain $p|y$ and
$$x=p^{a+b}k_1^{t_1+r_1}\cdots
k_s^{t_s+r_s}-\frac{\delta}{2},\; y=p^bk_1^{r_1}\cdots k_s^{r_s}.$$
From equation \eqref{eq:60}, one can see that $p\nmid y$ and
$$p^b=k_1^{t_1+r_1}\cdots k_s^{t_s+r_s}-\frac{\delta}{2},\; x=p^{2b}-2,\; y=k_1^{r_1}\cdots k_s^{r_s},\; a=b.$$
If equation \eqref{eq:61} is true, we can easily see that
$$y=1, p=11, s=1, r_1=1, t_1=2, k_1=5, a=b=1, \delta=4.$$
This completes the proof of Theorem \ref{thm:4.14}.
\end{proof}


\subsection{More Diophantine equations}\label{subsec:4.4}

Finally, we discuss the solvability of some Diophantine equations.
Using Theorem \ref{Thm:3.2}, Qi Sun and the first author proved the following results.
\begin{theorem} {\rm (\cite{suny})}\label{thm:4.15} The equation
$$\frac{ax^n-1}{ax-1}=y^2,2\nmid n,$$
holds for some positive integers $a, x, y,$ and $n$ with $x>1, n>1$ if and only if $$x=3, n=2m+1, \; \mbox{and}\; a=\frac{3^{m-1}+1}{4},\; 2|m.$$
 \end{theorem}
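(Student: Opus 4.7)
The key idea is to convert the given equation into the Pell equation $kX^{2}-lY^{2}=1$ of~\eqref{eq:2} with $k=ax$, $l=ax-1$, then read off everything from Theorem~\ref{Thm:3.2}. First I would use the hypothesis $2\nmid n$, $n>1$ to write $n=2m+1$ with $m\geq 1$, clear the denominator in $\frac{ax^{n}-1}{ax-1}=y^{2}$, and rewrite it as
\[
(ax)(x^{m})^{2}-(ax-1)y^{2}=1.
\]
So $(X,Y):=(x^{m},y)$ is a positive integer solution of $kX^{2}-lY^{2}=1$ with $k=ax$ and $l=ax-1$; note $\gcd(k,l)=1$, and $kl=ax(ax-1)$ is nonsquare since two consecutive positive integers (with $ax\geq 2$) cannot both be squares. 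The identity $(ax)\cdot 1^{2}-(ax-1)\cdot 1^{2}=1$ shows that $(1,1)$ is already a positive solution, so the minimal positive solution of this Pell equation is
\[
\varepsilon=\sqrt{ax}+\sqrt{ax-1}.
\]

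Next, since every prime divisor of $X=x^{m}$ divides $x$ and hence $k=ax$, the hypothesis of Theorem~\ref{Thm:3.2}(i) holds. I would then conclude that either $x^{m}\sqrt{ax}+y\sqrt{ax-1}=\varepsilon$ or $x^{m}\sqrt{ax}+y\sqrt{ax-1}=\varepsilon^{3}$. The first alternative gives $x^{m}=1$, forcing $m=0$ and $n=1$, contrary to $n>1$. For the second, a direct expansion gives
\[
\varepsilon^{3}=(4ax-3)\sqrt{ax}+(4ax-1)\sqrt{ax-1},
\]
so $x^{m}=4ax-3$ and $y=4ax-1$.

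From $x^{m}+3=4ax$ with $x>1$, I would deduce $x\mid 3$, hence $x=3$. Substituting back yields $12a=3^{m}+3$, i.e.\ $a=(3^{m-1}+1)/4$. The integrality $a\in\mathbb{N}$ is equivalent to $3^{m-1}\equiv -1\pmod 4$, that is to $m-1$ odd, i.e.\ $2\mid m$; this matches the stated conditions. The converse (``if'' direction) is then a routine verification: with $x=3$, $a=(3^{m-1}+1)/4$, $n=2m+1$, $2\mid m$, one checks that $y=3^{m}+2$ satisfies $(ax-1)y^{2}=ax^{n}-1$.

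The only nontrivial step is the opening move---recognising that the equation can be rewritten as $(ax)(x^{m})^{2}-(ax-1)y^{2}=1$, whose minimal solution is trivially $(1,1)$. Once that is in place, St\"ormer's theorem in the form of Theorem~\ref{Thm:3.2}(i) does essentially all the remaining work, and the cube-case branch of that theorem is precisely what produces the family with $x=3$.
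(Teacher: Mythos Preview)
Your proposal is correct and follows essentially the same route as the paper (and the original source \cite{suny}): rewrite $\frac{ax^{2m+1}-1}{ax-1}=y^{2}$ as $(ax)(x^{m})^{2}-(ax-1)y^{2}=1$, observe that $(1,1)$ is the minimal solution of $axX^{2}-(ax-1)Y^{2}=1$, and apply Theorem~\ref{Thm:3.2}(i) since every prime divisor of $x^{m}$ divides $ax$. Your computation of $\varepsilon^{3}=(4ax-3)\sqrt{ax}+(4ax-1)\sqrt{ax-1}$ and the ensuing deduction $x\mid 3$, $a=(3^{m-1}+1)/4$, $2\mid m$ are exactly what the paper's method yields (compare the analogous argument for Theorem~\ref{thm:4.21}).
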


 \begin{theorem} {\rm (\cite{suny})}\label{thm:4.16} The equation
$$\frac{ax^n+1}{ax+1}=y^2,2\nmid n,$$
holds for some positive integers $a, x, y$, and $n$ with $x>1, n>1$ if and only if $$x=3,n=2m+1,\; \mbox{and}\; a=\frac{3^{m-1}-1}{4},\;
2\nmid m.$$
 \end{theorem}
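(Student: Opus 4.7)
The plan is to recast the equation as a Pell equation of the form~\eqref{eq:1} and apply Theorem~\ref{Thm:3.2}(ii). Write $n = 2m+1$ with $m \geq 1$ and observe that $ax^n = ax\cdot(x^m)^2$; setting $v = x^m$, the equation $(ax+1)y^2 = ax^n+1$ becomes
\[
(ax+1)y^2 - (ax)v^2 = 1.
\]
Here $k = ax+1 > 1$, $l = ax$, $C = 1$, $\gcd(k,l)=1$, and $kl = ax(ax+1)$ is nonsquare (consecutive positive integers). The pair $(X,Y) = (1,1)$ satisfies the equation and is manifestly the minimal positive solution, so $\varepsilon = \sqrt{ax+1} + \sqrt{ax}$ with $x_1 = y_1 = 1$.

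Since every prime divisor of $v = x^m$ divides $x$, hence divides $l = ax$, Theorem~\ref{Thm:3.2}(ii) applies to the positive solution $(y,v)$ and yields either $y\sqrt{ax+1} + v\sqrt{ax} = \varepsilon$, which forces $v = 1$ (impossible since $v = x^m \geq x > 1$), or
\[
y\sqrt{ax+1} + v\sqrt{ax} = \varepsilon^3 = (4ax+1)\sqrt{ax+1} + (4ax+3)\sqrt{ax},
\]
with the side conditions $v = 3^s y_1$, $3 \nmid y_1$, $3^s - 3 = 4l y_1^2$. Since $y_1 = 1$, these collapse to $v = 3^s$ and $3^s = 4ax+3$, and matching coefficients gives $y = 4ax+1$ and $v = 4ax+3$.

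Combining $x \mid v = 3^s$ with $x > 1$ forces $x = 3^t$ for some $t \geq 1$, so $s = mt$. Substituting into $3^s = 4ax+3$ produces $3^{mt} = 4a\cdot 3^t + 3$. For $t \geq 2$ the right side is $\equiv 3 \pmod 9$ while the left is $\equiv 0 \pmod 9$, a contradiction; hence $t = 1$ and $x = 3$. The relation reduces to $3^m = 12a+3$, i.e.\ $a = (3^{m-1}-1)/4$; positivity and integrality force $m \geq 2$ and $m$ odd, i.e.\ $2 \nmid m$. The converse is a direct check: with these parameters one has $4ax+3 = 3^m = x^m$ and $4ax+1 = 3^m - 2$, which by the Pell identity $(\sqrt{ax+1}+\sqrt{ax})^3$ expansion satisfies $(ax+1)(4ax+1)^2 - (ax)(4ax+3)^2 = 1$, i.e.\ $(ax+1)y^2 = ax^n+1$ for $y = 3^m - 2$. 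The main obstacle is the Pell setup itself: once $(1,1)$ is pinned down as the minimal positive solution, the $\varepsilon^3$ branch of Theorem~\ref{Thm:3.2}(ii) dictates all remaining structural constraints, and the modular argument forcing $t = 1$ is routine.
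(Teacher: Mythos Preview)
Your proof is correct and follows exactly the approach the paper indicates: it attributes Theorem~\ref{thm:4.16} to \cite{suny} as an application of Theorem~\ref{Thm:3.2}, and the explicit argument it gives for the companion results (Theorems~\ref{thm:4.21}--\ref{thm:4.22}) proceeds by the same Pell reduction $(ax+1)y^2-(ax)(x^m)^2=1$ with minimal solution $(1,1)$ and an appeal to Theorem~\ref{Thm:3.2}. Your handling of the $\varepsilon^3$ branch, the mod~$9$ argument forcing $t=1$, and the parity constraint on $m$ are all as expected.
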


If $a=1$, then Theorem \ref{thm:4.15} becomes the famous result of Ljunggren.

\begin{theorem} {\rm (Ljunggren \cite{ljun})}\label{thm:4.17} The equation
$$\frac{x^n-1}{x-1}=y^2,2\nmid n,$$
holds for some positive integers $x,y$, and $n$ with $x>1, n>1$ if and only if $$x=3,\, y=11, \,and\,\, n=5.$$
 \end{theorem}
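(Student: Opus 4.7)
My plan is to obtain Ljunggren's theorem as the immediate $a=1$ specialization of Theorem~\ref{thm:4.15}, which has already classified, in this paper, all solutions of $\frac{ax^n-1}{ax-1}=y^2$ in positive integers $a,x,y,n$ with $x>1$ and $n>1$ odd.

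First, I would observe that setting $a=1$ reduces the equation of Theorem~\ref{thm:4.15} to exactly the equation of the present statement. Hence any solution $(x,y,n)$ of $\frac{x^n-1}{x-1}=y^2$ with $x>1$ and $n>1$ odd is forced into the parametric family described there, namely $x=3$, $n=2m+1$, and $a=\tfrac{3^{m-1}+1}{4}$ for some positive integer $m$ with $2\mid m$.

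Next, I would impose the constraint $a=1$, which gives the elementary equation $3^{m-1}+1=4$, whence $3^{m-1}=3$ and therefore $m=2$. This does satisfy $2\mid m$, so it is admissible, and there are no other values of $m$ that can work. Substituting back yields $n=2m+1=5$, and then $y^2=(3^5-1)/(3-1)=242/2=121$, so $y=11$. Conversely, the identity $(3^5-1)/(3-1)=121=11^2$ confirms that $(x,y,n)=(3,11,5)$ really is a solution.

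Within the framework of the present paper the argument therefore presents no obstacle at all: every difficulty has already been absorbed into the proof of Theorem~\ref{thm:4.15}, which in turn rests on Lemma~\ref{lem:2.4} together with the St\"ormer-type Theorem~\ref{Thm:3.2}. If one wished to give a self-contained proof rather than a derivation, the hard step would be precisely that St\"ormer analysis of the Pell-type equation that arises from $\frac{x^n-1}{x-1}=y^2$ after reducing to a quadratic form; but invoking Theorem~\ref{thm:4.15} as a black box reduces the whole matter to the single arithmetic check $3^{m-1}=3\Rightarrow m=2$ performed above.
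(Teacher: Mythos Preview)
Your proposal is correct and is exactly the approach the paper takes: immediately after stating Theorem~\ref{thm:4.15}, the paper remarks that ``If $a=1$, then Theorem~\ref{thm:4.15} becomes the famous result of Ljunggren,'' and gives no separate proof. You have simply filled in the one-line arithmetic check $3^{m-1}+1=4\Rightarrow m=2$ that makes this specialization explicit.
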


 Using Theorems \ref{Thm:3.3} and \ref{thm:4.4}, the second author obtained the following results.
\begin{theorem} {\rm (\cite{luo})}\label{thm:4.18}
(A) The equation
$$\frac{ax^n-2}{ax-2}=y^2,2\nmid n,\,2\nmid a$$
holds for some positive integers $a,x,y$ and $n$ with $x>1, n>1$ if
and only if $$x=3,n=2m+1, \; \mbox{and}\; a=\frac{3^{m-1}+1}{2},\;
2\nmid m.$$\\
(B) The equation
$$\frac{ax^n+2}{ax+2}=y^2,2\nmid n,\,2\nmid a$$
holds for some positive integers $a,x,y$, and $n$ with $x>1, n>1$ if and only if $$x=3,n=2m+1, \; \mbox{and}\; a=\frac{3^{m-1}-1}{2},\; 2|m.$$
 \end{theorem}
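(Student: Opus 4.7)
The plan is to convert the equations in (A) and (B) into instances of equation \eqref{eq:4} and then apply Theorem \ref{Thm:3.3} together with the explicit cube-of-fundamental-solution formula. Writing $n=2m+1$, the equation in (A) rearranges to
\[
(ax)(x^m)^2-(ax-2)y^2=2,
\]
so $(X,Y):=(x^m,y)$ is a positive solution of $kX^2-lY^2=2$ with $k=ax$ and $l=ax-2$. Similarly, (B) rearranges to $(ax+2)y^2-(ax)(x^m)^2=2$, which has the same shape with the two variables swapped.

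Before invoking Theorem \ref{Thm:3.3} we must check that $x$ is odd, for otherwise $2\mid kl$. A short $2$-adic analysis handles $v_2(x)=1$: in that case $v_2(ax^n-2)=1$ but $v_2(ax-2)\ge 2$ (because $ax\equiv 2\pmod 4$), contradicting $(ax-2)\mid(ax^n-2)$. When $v_2(x)\ge 2$ we divide through by $2$, obtaining a Pell-type equation $k'X^2-l'Y^2=1$ with coprime $k'=ax/2$, $l'=(ax-2)/2$; since every prime of $x^m$ still divides $k'$, Theorem \ref{Thm:3.2}(i) applies and its cube-case formula (computed as below) forces $x=3$, contradicting $4\mid x$. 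Hence $x$ must be odd.

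With $x$ odd, $k,l$ are coprime and $kl$ is odd and nonsquare, and $(X_1,Y_1)=(1,1)$ is clearly the minimal positive solution of $kX^2-lY^2=2$ since $k-l=2$. Because every prime divisor of $X=x^m$ divides $k=ax$, Theorem \ref{Thm:3.3}(i) gives either $X\sqrt k+Y\sqrt l=\sqrt k+\sqrt l$, which forces $X=Y=1$ and hence $x=1$ (impossible), or
\[
\frac{X\sqrt k+Y\sqrt l}{\sqrt 2}=\left(\frac{\sqrt k+\sqrt l}{\sqrt 2}\right)^3=\frac{(k+3l)\sqrt k+(3k+l)\sqrt l}{2\sqrt 2}.
\]
Matching coefficients yields $X=(k+3l)/2=2ax-3$ and $Y=(3k+l)/2=2ax-1$. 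The constraint $x^m=2ax-3$ forces $x\mid 3$, so $x=3$; substituting gives $a=(3^{m-1}+1)/2$ and $y=3^m+2$. The hypothesis $2\nmid a$ translates to $3^{m-1}\equiv 1\pmod 4$, i.e.\ $2\nmid m$, completing (A).

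Part (B) is handled by the same template applied to $(ax+2)y^2-(ax)(x^m)^2=2$: here the cube-case formulas give $y=2ax+1$ and $x^m=2ax+3$, again forcing $x=3$, whence $a=(3^{m-1}-1)/2$, with the parity condition $2\nmid a$ now equivalent to $2\mid m$. The main obstacle I expect is the $x$-even reduction in Step~1: there the hypothesis $2\nmid kl$ of Theorem \ref{Thm:3.3} fails, so one must descend to Theorem \ref{Thm:3.2} via division by $2$ and then verify carefully that its cube-case output genuinely clashes with $4\mid x$.
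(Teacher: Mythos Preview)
Your argument is correct and follows precisely the route the paper indicates: the text preceding Theorem~\ref{thm:4.18} says it is obtained ``using Theorem~\ref{Thm:3.3}'', and your reduction $(ax)(x^m)^2-(ax-2)y^2=2$ with minimal solution $(1,1)$, followed by the cube-case formula of Theorem~\ref{Thm:3.3}(i), is exactly that method (the paper does not spell out the proof here, but the identical template is written out for the $c=\pm1$ analogue in the proof of Theorem~\ref{thm:4.21}). Your extra care in disposing of even $x$ via a $2$-adic split and a descent to Theorem~\ref{Thm:3.2} is a detail the paper's sketch omits, but it is needed and you handle it correctly.
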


\begin{theorem} {\rm (\cite{luo})}\label{thm:4.19}
(A) The equation
$$\frac{ax^n-4}{ax-4}=y^2,2\nmid n,\,2\nmid a$$
holds for some positive integers $a,x,y$, and $n$ with $x>1, n>1$ if and only if $$x=5,n=3, \,and\,\, a=1.$$\\
(B) The equation
$$\frac{ax^n+4}{ax+4}=y^2,2\nmid n,\,2\nmid a$$
has no solutions in positive integers $a,x,y$, and $n$ with $x>1,
n>1.$
 \end{theorem}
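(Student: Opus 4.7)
My plan is to reduce each equation to a Pell equation of type $KX^{2} - LY^{2}=4$ and to apply Theorems \ref{Thm:3.4} and \ref{Thm:3.2} together with Lemma \ref{lem:2.2}, in the same spirit in which Sun--Yuan and Luo obtained Theorems \ref{thm:4.15}--\ref{thm:4.18}.

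Writing $n=2m+1$ with $m\geq 1$, part (A) rearranges to
$$ax\cdot (x^{m})^{2}-(ax-4)\,y^{2}=4,$$
so $(X,Y)=(x^{m},y)$ is a positive integer solution of the Pell equation with coefficients $K=ax$, $L=ax-4$. Since $K-L=4$, the pair $(1,1)$ is its minimal positive solution. When $x$ is odd, $K$ and $L$ are coprime odd integers and every prime divisor of $X=x^{m}$ divides $x\mid K$, so Theorem \ref{Thm:3.4}(i) combined with Lemma \ref{lem:2.2} leaves only two possibilities: either $(X,Y)=(1,1)$, impossible since $x>1$, or the exceptional tuple $(K,L,X,Y)=(5,1,5,11)$. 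The latter forces $ax=5$; with $a$ odd this gives $a=1$, $x=5$, and then $x^{m}=5$ forces $m=1$, hence $n=3$ and $y=11$, exactly the asserted solution.

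Part (B) runs by the mirror construction: rearranging gives $(ax+4)\,y^{2}-ax\cdot(x^{m})^{2}=4$, again a Pell equation with minimal positive solution $(1,1)$. When $x$ is odd, every prime of the companion variable $x^{m}$ divides $L=ax$, so Theorem \ref{Thm:3.4}(ii) together with Lemma \ref{lem:2.2} forces $(y,x^{m})=(1,1)$, contradicting $x>1$; this time there is no exceptional case to worry about.

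The main obstacle will be the case when $x$ is even, since then $\gcd(K,L)\geq 2$ and Theorem \ref{Thm:3.4} does not apply directly. I intend to handle this by a $2$-adic valuation count: writing $x=2^{e}x_{0}$ with $x_{0}$ odd, the subcases $e=1$ and $e=2$ are killed at once because the $2$-adic valuations of $ax^{n}\mp 4$ and of $ax\mp 4$ are incompatible with a perfect-square ratio (in one subcase the quotient is $\equiv 2\pmod{4}$, in the other it fails even to be an integer). For $e\geq 3$ both $K$ and $L$ (respectively $K'$ and $L'$) are divisible by $4$, so dividing the Pell equation by $4$ yields $kX^{2}-lY^{2}=1$ with $k-l=1$ and minimal solution $(1,1)$; every prime of $X=x^{m}$ divides $k=2^{e-2}ax_{0}$, so Theorem \ref{Thm:3.2} applies and the only nontrivial outcome would force $X=x^{m}$ to be a pure power of $3$, which contradicts $x$ being even. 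Once the parity bookkeeping is in place, everything else drops out of the Pell-equation machinery.
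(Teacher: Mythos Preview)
Your proposal is correct and follows precisely the route the paper indicates: the paper does not spell out a proof of Theorem~\ref{thm:4.19} but attributes it to \cite{luo} and notes that it is obtained ``using Theorems~\ref{Thm:3.3} and~\ref{Thm:3.4}''; your reduction $ax\cdot(x^{m})^{2}-(ax-4)y^{2}=4$ (respectively $(ax+4)y^{2}-ax\cdot(x^{m})^{2}=4$) with minimal solution $(1,1)$ and appeal to Theorem~\ref{Thm:3.4} is exactly that argument, and the exceptional tuple $(5,1,5,11)$ in Theorem~\ref{Thm:3.4}(i) is what produces the unique solution $a=1,\,x=5,\,n=3$ in part~(A).

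Your treatment of even $x$ is a point the paper leaves implicit. Your $2$-adic bookkeeping is sound: for $v_{2}(x)=1$ one has $v_{2}(ax^{n}\mp4)=2$ and $v_{2}(ax\mp4)=1$, so the quotient has odd $2$-valuation; for $v_{2}(x)=2$ one has $v_{2}(ax^{n}\mp4)=2$ while $v_{2}(ax\mp4)\ge3$, so the quotient is not an integer; and for $v_{2}(x)\ge3$ the Pell equation divides cleanly by $4$ to give $kX^{2}-lY^{2}=1$ with $k-l=1$, minimal solution $(1,1)$, to which Theorem~\ref{Thm:3.2} applies and forces $x^{m}=3^{s}$, impossible for even $x$. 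This fills the gap correctly. (Your invocation of Lemma~\ref{lem:2.2} is harmless but not actually needed: Theorem~\ref{Thm:3.4} already asserts that the given solution \emph{is} the minimal one, so the conclusion $(X,Y)=(1,1)$ follows immediately from the fact that $(1,1)$ is itself a solution.)
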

In 1990, Zhenfu Cao extended Theorems \ref{thm:4.15} and \ref{thm:4.16}.

\begin{theorem} {\rm (\cite{cao03})}\label{thm:4.20}
(A) The equation
$$\frac{ax^m-1}{abx-1}=by^2,2\nmid m,$$
holds for some positive integers $a,b,x,y$, and $m$ with $x>1, m>1$ if and only if $$x=3,m=4s+1,\; a=\frac{3^{2s-1}+1}{4},\; \mbox{and}\; y=3^{2s}+2.$$\\
(B) The equation
$$\frac{ax^m+1}{abx+1}=by^2,2\nmid m,$$
holds for some positive integers $a,b,x,y$, and $m$ with $x>1, m>1$ if and only if $$x=3,m=4s+3, \; a=\frac{3^{2s}-1}{4},\; \mbox{and}\; y=3^{2s+1}-2.$$
\end{theorem}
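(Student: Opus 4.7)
The plan is to reduce Theorem 4.20 to a Pell equation and apply the Störmer-type machinery (Lemma 2.4 and Theorem 3.2) developed earlier in the paper.

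Sufficiency of part (A) is verified by direct substitution: with $(x, m, a, y) = (3, 4s+1, (3^{2s-1}+1)/4, 3^{2s}+2)$ and $b=1$, both sides evaluate to $(3^{6s} + 3^{4s+1} - 4)/4$. For necessity, assume $(a, b, x, y, m)$ is any positive integer solution with $x, m > 1$ and $m$ odd. Setting $U = x^{(m-1)/2}$ and $V = y$, the defining equation rearranges to the Pell equation
\[
  ax \cdot U^2 - b(abx - 1) \cdot V^2 = 1.
\]
The coefficients $k = ax$ and $l = b(abx - 1)$ are coprime ($\gcd(ax, abx-1) = 1$ and reducing modulo $b$ gives $\gcd(ax, b) = 1$), and $kl = N(N-1)$ with $N = abx$, which is nonsquare.

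The first crucial step is to show $b = 1$. By Lemma 2.4(1), the minimal positive solution $\varepsilon_1$ of $k U^2 - l V^2 = 1$ satisfies $\varepsilon_1^2 = \varepsilon_2$, where $\varepsilon_2 = (2N-1) + 2\sqrt{N(N-1)} = (\sqrt{N}+\sqrt{N-1})^2$ is the fundamental solution of the standard Pell equation $X^2 - N(N-1)Y^2 = 1$; hence $\varepsilon_1 = \sqrt{N}+\sqrt{N-1}$. Writing $\varepsilon_1 = U_1\sqrt{k} + V_1\sqrt{l}$ and matching the rational and the $\sqrt{kl}$-parts of $\varepsilon_1^2$ with those of $\varepsilon_2$ forces $U_1 V_1 = 1$ (so $U_1 = V_1 = 1$) and $k + l = 2N - 1$; together with $kl = N(N-1)$, this yields $\{k, l\} = \{N, N-1\}$. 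The case $(k, l) = (N-1, N)$ gives $ax(b-1) = 1$, forcing $ax = 1$, which contradicts $x > 1$; thus $(k, l) = (N, N-1)$, which forces $b = 1$.

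With $b = 1$ the Pell equation reads $axU^2 - (ax-1)V^2 = 1$ with minimal solution $(1, 1)$. Since every prime divisor of $U = x^{(m-1)/2}$ divides $x \mid k$, Theorem 3.2(i) gives either $(U, V) = (1, 1)$---excluded since $m > 1$ and $x > 1$---or $U\sqrt{k} + V\sqrt{l} = \varepsilon^3 = (4ax - 3)\sqrt{ax} + (4ax - 1)\sqrt{ax-1}$. Then $U = 4ax - 3$ and $V = 4ax - 1$. Since $m \geq 3$ forces $x \mid U$ and clearly $x \mid 4ax$, we conclude $x \mid 3$, hence $x = 3$. The relation $3^{(m-1)/2} = 3(4a - 1)$ now gives $4a - 1 = 3^{(m-3)/2}$, so $a = (3^{(m-3)/2}+1)/4$; integrality of $a$ forces $(m-3)/2$ to be odd, so $m = 4s+1$ for some $s \geq 1$, yielding $a = (3^{2s-1}+1)/4$ and $y = 12a - 1 = 3^{2s}+2$, as claimed. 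Part (B) follows the same template applied to the Pell equation $b(abx+1)y^2 - ax(x^{(m-1)/2})^2 = 1$, whose associated standard Pell $X^2 - N(N+1)Y^2 = 1$ has fundamental solution $(\sqrt{N+1}+\sqrt{N})^2$. The analogous argument forces $b = 1$ and $(k', l') = (ax+1, ax)$. Theorem 3.2(ii) applied to $V' = x^{(m-1)/2}$ (whose primes divide $l' = ax$) rules out the minimal case and yields $\varepsilon^3 = (4ax+1)\sqrt{ax+1} + (4ax+3)\sqrt{ax}$; then $x^{(m-1)/2} = 4ax+3$ forces $x = 3$, the condition $a = (3^{(m-3)/2}-1)/4 \in \mathbb{N}$ requires $(m-3)/2$ to be a positive even integer, giving $m = 4s+3$, $a = (3^{2s}-1)/4$, and $y = 4ax+1 = 3^{2s+1}-2$. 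The main obstacle in both parts is the step forcing $b = 1$, handled cleanly by the rigid factorization $\varepsilon_2 = (\sqrt{N} \pm \sqrt{N\mp 1})^2$ provided by Lemma 2.4.
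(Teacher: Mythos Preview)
Your proof is correct and follows the same pattern the paper uses throughout Section~4: reduce to a Pell equation $kU^2-lV^2=1$ with $kl=N(N\mp1)$, invoke the uniqueness of the factorization (your direct use of Lemma~2.4 is exactly the content of Theorem~4.2, which the paper applies in its detailed proofs, e.g.\ of equation~(69)) to force $b=1$, and then finish with Theorem~3.2. The paper itself gives only a one-line hint (``by Theorem~3.2 and Lemma~2.3''), so your writeup is more explicit, but the strategy is the same; the only cosmetic difference is that you cite Lemma~2.4 directly rather than the packaged uniqueness statement Theorem~4.2.
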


 The proof of Theorem \ref{thm:4.20} can be given by Theorem \ref{Thm:3.2} and Lemma \ref{lem:2.3}. Applying a method of Lebesgue \cite{le}, M. Filasetu, F. Luca, P. St\^{a}nic\^{a} and R. G. Underwood \cite{flsu01} determined the Galois groups associated with some polynomials constructed using circulant matrices. In the process of determining the Galois groups, the irreducibility of the trinomial $x^{2p}+x^p+m^p$ was established, where $p$ represents an odd prime and $m$ an integer $\geq2$. In paper \cite{flsu02}, they discussed the irreducibility of the more general polynomial $x^{2p}+bx^p+c$  where $b$ and $c$ are nonzero integers. Moreover, they established the irreducibility of the more specific trinomial $x^{2p}+x^p+m^p$ as a consequence of the following Diophantine
 result.
 \begin{theorem} {\rm (\cite{flsu02})}\label{thm:4.21}
The equation
\begin{equation}\label{eq:62}
\frac{ax^{n+2l}-1}{ax^n-1}=y^2,
\end{equation}
 holds for some positive integers $a,x,n$, and $l$ with $x>1$ and some rational number $y$ if and only if
$$2|l,\; a=\frac{3^{l-1}+1}{4},\; x=3, n=1, \; \mbox{and}\; y=\pm (3^l+2).$$
 \end{theorem}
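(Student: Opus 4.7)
The plan is to recast the equation as a Pell-type equation, apply Walker's theorem (Theorem \ref{Thm:3.2}), and finish with a short $3$-adic valuation argument.

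First, I would rewrite $(ax^n-1)y^2 = ax^{n+2l}-1$ in the Pell form
\[
(ax^n)(x^l)^2 - (ax^n-1)\,y^2 = 1.
\]
To accommodate a rational $y = p/q$ in lowest terms, note that $p^2(ax^n-1) = q^2(ax^{n+2l}-1)$ together with $\gcd(p,q)=1$ forces $q^2 \mid ax^n-1$. Setting $d = (ax^n-1)/q^2$ and clearing the $q^2$ yields the integer Pell equation
\[
K X^2 - d V^2 = 1, \qquad K = ax^n,\quad X = x^l,\quad V = p,
\]
with $\gcd(K,d) = 1$ (from $K - dq^2 = 1$). Since $(1,q)$ is a solution and any positive integer solution with $X = 1$ must have $V = q$, it is easily seen to be the minimal positive one, so the governing unit is $\varepsilon = \sqrt{K} + q\sqrt{d}$.

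Next, every prime divisor of $X = x^l$ divides $x$ and hence $K$, so Theorem \ref{Thm:3.2}(i) applies. The option $X\sqrt K + V\sqrt d = \varepsilon$ would force $x^l = 1$, contradicting $x \geq 2,\, l \geq 1$; hence $X\sqrt K + V\sqrt d = \varepsilon^3$, and the supplementary condition of Theorem \ref{Thm:3.2}(i) reads $X = 3^s$ together with $3^s + 3 = 4K = 4ax^n$. Since $x^l = 3^s$ with $x \geq 2$ an integer, one must have $x = 3$ and $l = s$, so $3^l + 3 = 4a \cdot 3^n$. The case $l = 1$ becomes $6 = 4a\cdot 3^n$, which has no solution in positive integers. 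For $l \geq 2$ we have $v_3(3^l + 3) = 1$ (because $3 \nmid 3^{l-1} + 1$), so $n = 1$ and $3 \nmid a$; hence $a = (3^{l-1}+1)/4$, and the integrality of $a$ together with $3^{l-1} \equiv (-1)^{l-1} \pmod 4$ forces $l$ to be even.

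Finally, expanding $\varepsilon^3 = (K + 3q^2 d)\sqrt{K} + q(3K + q^2 d)\sqrt{d}$ and using $K - q^2 d = 1$ gives $V = q(4K - 1) = q(3^l + 2)$. Coprimality $\gcd(p,q) = 1$ then forces $q = 1$, so $y = p = 3^l + 2$, and the sign ambiguity in $y^2$ yields $y = \pm(3^l+2)$. The converse is a direct verification. The main obstacle is spotting the Pell reformulation and reducing the rational-$y$ case cleanly to an integer Pell equation; once this is in hand, Walker's theorem combined with elementary $3$-adic bookkeeping finishes the proof.
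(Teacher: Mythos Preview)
Your proof is correct and follows essentially the same route as the paper: both reduce to the Pell-type equation $ax^n X^2 - dY^2 = 1$ with minimal solution having $X=1$ (the paper uses the squarefree-kernel variant $ax^n-1=du^2$, you use $d=(ax^n-1)/q^2$ with $q$ the reduced denominator of $y$; these are equivalent), and then apply Theorem~\ref{Thm:3.2}(i). One small slip worth tightening: from $x^l = 3^s$ alone you only get $x = 3^j$ for some $j\ge 1$; the conclusion $x=3$ comes from feeding $x=3^j$ into $3^s + 3 = 4a x^n$ and running your $3$-adic argument, which yields $jn=1$ and hence $j=n=1$.
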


 We can give a proof of Theorem \ref{thm:4.21} by Theorem \ref{Thm:3.2}. A solution to the equation \eqref{eq:62} implies that there exist positive integers $u$ and $v$ satisfying
 $$ax^n-1=du^2\; \mbox{and}\; ax^{n+2l}-1=dv^2$$
where $d$ is a positive squarefree integer dividing
$\gcd(ax^{n+2l}-1,ax^n-1)$. Then we have the equation
$$ax^n(x^l)^2-dv^2=1.$$
It follows that $(x^l,v)$ is a solution of the Diophantine equation
\begin{equation}\label{eq:63}
ax^nX^2-dY^2=1.
\end{equation}
As $ax^n=du^2+1$, then $(1,u)$ is the minimal positive solution of the equation \eqref{eq:63}. Thus by Theorem \ref{Thm:3.2}, we have
$$x^l=3^s,\,\,3^s+3=4ax^n,$$
which implies that
$$x=3,\,\,n=1,\,\,l=s,\,\,a=\frac{3^{l-1}+1}{4},\,\,2|l.$$
But $$\frac{3a3^{2l}-1}{3a-1}=3^{2l}+4\times 3^l+4=(3^l+2)^2.$$
Therefore
$$y=\pm (3^l+2).$$
Using Theorem \ref{Thm:3.2}, we also get the following result.
\begin{theorem}\label{thm:4.22} The equation
\begin{equation}\label{eq:64}
\frac{ax^{n+2l}+1}{ax^n+1}=y^2
\end{equation}
 holds for some positive integers $a,x,n$, and $l$ with $x>1$ and some rational number $y$ if and only if
$$2\nmid l,\,\,a=\frac{3^{l-1}-1}{4},\,\,x=3,n=1 \,\,and\,\, y=\pm (3^l-2).$$
 \end{theorem}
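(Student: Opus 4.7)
The plan is to mirror the proof of Theorem \ref{thm:4.21}, with the sign reversed throughout. Since $y$ is rational, $ax^n+1$ and $ax^{n+2l}+1$ share the same squarefree part $d$, so one can write
$$ax^n+1=du^2,\qquad ax^{n+2l}+1=dv^2$$
with $u,v\in\mathbb{N}$ and $d$ squarefree. Subtracting $x^{2l}$ times the first relation from the second gives $dv^2-(ax^n)(x^l)^2=1$, so both $(X,Y)=(1,u)$ and $(X,Y)=(x^l,v)$ are positive integer solutions of
$$dY^2-(ax^n)X^2=1.$$
The solution $(1,u)$ is manifestly the minimal positive one (since $X=1$ cannot be lowered), and every prime divisor of $x^l$ clearly divides $ax^n$.

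Next I would feed this into the St\"ormer-type machinery of Section \ref{sec:3}. If $d=1$, Theorem \ref{Thm:3.1} would force $(v,x^l)$ to be the fundamental solution of a genuine Pell equation, contradicting the existence of the smaller solution $(u,1)$ (as $x^l>1$). If $d>1$, then Theorem \ref{Thm:3.2}(ii) applied with $k=d$, $l=ax^n$, and $y_1=1$ yields either $(v,x^l)=(u,1)$---impossible since $x^l>1$---or the cubic case
$$x^l=3^s,\qquad 3^s-3=4ax^n.$$

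From $x^l=3^s$ and $x>1$ I obtain $x=3$ and $s=l$, so the relation becomes $3(3^{l-1}-1)=4a\cdot3^n$. The case $l=1$ forces $0=4a\cdot3^n$, which is impossible; for $l\ge2$ the factor $3^{l-1}-1$ is coprime to $3$, so comparing $3$-adic valuations gives $n=1$ and $a=(3^{l-1}-1)/4$. This is a positive integer precisely when $3^{l-1}\equiv1\pmod 4$, i.e.\ when $l$ is odd. Finally, expanding $\varepsilon^3=(u\sqrt{d}+\sqrt{ax^n})^3$ and using $du^2=ax^n+1$ gives the explicit formula $v=u(4ax^n+1)=u(3^l-2)$, whence $y=\pm v/u=\pm(3^l-2)$; the converse direction is then immediate from the factorisation identity $(3^l+1)(3^l-2)^2=3^{3l}-3^{2l+1}+4$.

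The only real obstacle is bookkeeping: one must orient the Pell-type equation so that the ``every prime divisor of $y$ divides $l$'' form of Theorem \ref{Thm:3.2}(ii) applies (rather than the symmetric part (i) used in Theorem \ref{thm:4.21}), and one must dispose of the degenerate case $d=1$ separately because Theorem \ref{Thm:3.2} is stated only for $k>1$.
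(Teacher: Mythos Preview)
Your proposal is correct and is exactly the argument the paper has in mind: the paper does not spell out a proof of Theorem \ref{thm:4.22} but merely says ``Using Theorem \ref{Thm:3.2}, we also get the following result,'' pointing back to the proof of Theorem \ref{thm:4.21}. Your write-up carries out that analogy faithfully, with the correct sign-flip bookkeeping (using part (ii) of Theorem \ref{Thm:3.2} rather than part (i), since now $d$ plays the role of $k$ and $ax^n$ the role of $l$), and you are right to treat the degenerate case $d=1$ separately via Theorem \ref{Thm:3.1}, since Theorem \ref{Thm:3.2} is only stated for $k>1$.
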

 Similarly, using Theorems \ref{Thm:3.3} and \ref{Thm:3.4} we can obtain the following theorems.
 \begin{theorem} \label{thm:4.23}
 (A) The Diophantine equation
\begin{equation}\label{eq:65}
\frac{ax^{n+2l}-2}{ax^n-2}=y^2,\,\,2\nmid a
\end{equation}
 holds for some positive integers $a,x,n$, and $l$ with $x>1$ and some rational number $y$ if and only if
$$2\nmid l,\,\,a=\frac{3^{l-1}+1}{2},\,\,x=3,n=1, \,\,and\,\, y=\pm
(3^l+2).$$\\
(B) The Diophantine equation
\begin{equation}\label{eq:66}
\frac{ax^{n+2l}+2}{ax^n+2}=y^2,\,\,2\nmid a
\end{equation}
 holds for some positive integers $a,x,n$ and $l$ with $x>1$ and some rational number $y$ if and only if
$$2|l,\,\,a=\frac{3^{l-1}-1}{2},\,\,x=3, n=1, \,\,and\,\, y=\pm (3^l-2).$$
 \end{theorem}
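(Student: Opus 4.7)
The approach is to reduce each of (A) and (B) to the Pell-type equation $kX^{2}-lY^{2}=2$ handled by Theorem~\ref{Thm:3.3}, in direct analogy with the derivation of Theorem~\ref{thm:4.21} from Theorem~\ref{Thm:3.2} just sketched in the excerpt. I will treat (A) carefully; (B) is entirely parallel, invoking part (ii) of Theorem~\ref{Thm:3.3} instead of part (i).

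Suppose \eqref{eq:65} holds for positive integers $a,x,n,l$ and a rational $y$. Choose the positive squarefree integer $d$ common to both $ax^{n}-2$ and $ax^{n+2l}-2$ so that
\[
ax^{n}-2=du^{2},\qquad ax^{n+2l}-2=dv^{2}
\]
with positive integers $u,v$ and $y=\pm v/u$. Using $ax^{n+2l}=(ax^{n})\,x^{2l}$ one obtains
\[
(ax^{n})(x^{l})^{2}-d\,v^{2}=2,
\]
so with $k:=ax^{n}$ both $(X,Y)=(1,u)$ and $(X,Y)=(x^{l},v)$ are positive integer solutions of $kX^{2}-dY^{2}=2$, and $(1,u)$ is visibly the minimal one. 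From $k-du^{2}=2$ together with $2\nmid a$ one verifies the hypotheses $\gcd(k,d)=1$ and $2\nmid kd$ (in particular $x$ must be odd); every prime divisor of $x^{l}$ divides $k=ax^{n}$, so Theorem~\ref{Thm:3.3}(i) applies.

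The theorem rules out the trivial alternative $(x^{l},v)=(1,u)$ (impossible since $x>1$) and yields
\[
x^{l}=3^{s},\qquad 3^{s}+3=2k=2ax^{n},\qquad s\in\mathbb{N}.
\]
The first relation forces $x=3$ and $s=l$; the second then gives $2a\cdot 3^{n-1}=3^{l-1}+1$. Since $3^{l-1}+1\equiv 1\pmod 3$, necessarily $n=1$ and $a=(3^{l-1}+1)/2$. The parity condition $2\nmid l$ pops out of the same identity: as $a$ is odd, $3^{l-1}+1=2a\equiv 2\pmod 4$, hence $3^{l-1}\equiv 1\pmod 4$, so $l-1$ is even. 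A direct substitution with these values confirms $y^{2}=(3^{l}+2)^{2}$, also supplying the converse.

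For (B) one writes $ax^{n}+2=du^{2}$ and $ax^{n+2l}+2=dv^{2}$, which now produces $d\,v^{2}-(ax^{n})(x^{l})^{2}=2$; rewriting as $k'Y^{2}-l'X^{2}=2$ with $k'=d$, $l'=ax^{n}$, the solution $(X,Y)=(x^{l},v)$ satisfies the hypothesis of Theorem~\ref{Thm:3.3}(ii), and an analogous chain of deductions forces $x=3$, $s=l\ge 2$, $n=1$, $a=(3^{l-1}-1)/2$; the parity $2\mid l$ follows because $2a=3^{l-1}-1$ with $a$ odd gives $3^{l-1}\equiv 3\pmod 4$, so $l-1$ is odd. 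The main obstacle is not the final arithmetic---which is essentially dictated once Theorem~\ref{Thm:3.3} is in play---but the bookkeeping at the start: confirming that $(1,u)$ is truly the minimal positive solution, verifying the parity and coprimality conditions $\gcd(k,d)=1$ and $2\nmid kd$ required by Theorem~\ref{Thm:3.3} (equivalently, that $x$ must be odd), and disposing of the degenerate case $s=0$, which collapses to $4=2ax^{n}$ and has no solutions in positive integers with $a$ odd and $x>1$.
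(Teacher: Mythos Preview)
Your reduction to Theorem~\ref{Thm:3.3} is exactly the route the paper intends (it states Theorem~\ref{thm:4.23} with the remark ``Similarly, using Theorems~\ref{Thm:3.3} and~\ref{Thm:3.4}\ldots'', mirroring the derivation of Theorem~\ref{thm:4.21} from Theorem~\ref{Thm:3.2}). Once $x$ is known to be odd, your argument is correct and complete: $(1,u)$ is minimal, every prime of $x^l$ divides $k=ax^n$, Theorem~\ref{Thm:3.3} forces $x^l=3^s$ and $3^s+3=2ax^n$, and the arithmetic you carry out is right, including the parity of $l$ and the verification of the converse.

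The one genuine gap is the clause ``(in particular $x$ must be odd)''. You cannot deduce $2\nmid k$ from $k-du^2=2$ and $2\nmid a$ alone: if $x$ is even then $k=ax^n$ is even, and depending on the $2$-adic valuation of $ax^n-2$ one gets either $d$ even or $u$ even, so the standing hypothesis $2\nmid kd$ of Theorem~\ref{Thm:3.3} simply fails and the theorem does not apply. There is no cheap parity obstruction ruling out even $x$ directly from \eqref{eq:65}. What is needed---and what the paper does in the proof of the more general Theorem (equation~\eqref{eq:69}, Case $c=-2$, $x$ even)---is to divide the equation $kX^2-dY^2=2$ through by $2$, obtaining an equation of the shape $k_1X^2-d_1Y^2=1$ with $(1,u')$ minimal, and then apply Theorem~\ref{Thm:3.2}. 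That forces $x^l=3^s$ again, hence $x=3$, contradicting $x$ even. So the ``bookkeeping'' you flag is not quite bookkeeping: it requires invoking Theorem~\ref{Thm:3.2} as well as Theorem~\ref{Thm:3.3}. With that case added, your proof is complete and matches the paper's approach.
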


 \begin{theorem}\label{thm:4.24}
 (A) The Diophantine equation
\begin{equation}\label{eq:67}
\frac{ax^{n+2l}-4}{ax^n-4}=y^2,\,\,2\nmid a
\end{equation}
 holds for some positive integers $a,x,n$, and $l$ with $x>1$ and some rational number $y$ if and only if
$$l=n=a=1,\,\,x=5, \,\,and\,\, y=\pm 11.$$\\
(B) Consider the Diophantine equation
\begin{equation}\label{eq:68}
\frac{ax^{n+2l}+4}{ax^n+4}=y^2,\;2\nmid a.
\end{equation}
 There are no solutions to equation \eqref{eq:68} in positive integers $a, x, n,$ and $l$ with $x>1$ and some rational number $y.$
 \end{theorem}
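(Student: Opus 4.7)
The plan is to mirror the proof of Theorem~\ref{thm:4.21} by reducing each equation to a Pell-type equation $kX^{2}-lY^{2}=4$ and then invoking Theorem~\ref{Thm:3.4}.

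\textbf{Part (A).} A rational solution $y$ forces the existence of a squarefree positive integer $d$ and positive integers $u,v$ with
\begin{equation*}
ax^{n}-4=du^{2},\qquad ax^{n+2l}-4=dv^{2},
\end{equation*}
and then $y=\pm v/u$. Consequently $(X,Y)=(1,u)$ and $(X,Y)=(x^{l},v)$ are two positive integer solutions of
\begin{equation*}
ax^{n}\cdot X^{2}-dY^{2}=4,
\end{equation*}
and the former is clearly the minimal positive solution. Since every prime divisor of $x^{l}$ divides $ax^{n}$, Theorem~\ref{Thm:3.4}(i) applied to $(x^{l},v)$ leaves only two alternatives: either $(x^{l},v)$ equals the minimal solution $(1,u)$, forcing $x^{l}=1$ (impossible since $x>1$, $l\geq 1$), or we land in the exceptional case $(k,l,X,Y)=(5,1,5,11)$ of that theorem. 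The latter forces $ax^{n}=5$, $d=1$, $x^{l}=5$, $v=11$, and since $a$ is a positive odd integer with $x>1$, this gives $a=1$, $x=5$, $n=1$, $l=1$, $u=1$, hence $y=\pm 11$. Direct substitution confirms this is a genuine solution.

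\textbf{Part (B).} The analogous reduction produces a squarefree $d$ and positive integers $u,v$ with $ax^{n}+4=du^{2}$ and $ax^{n+2l}+4=dv^{2}$, so that $(u,1)$ and $(v,x^{l})$ are positive integer solutions of
\begin{equation*}
dX^{2}-ax^{n}Y^{2}=4.
\end{equation*}
Every prime divisor of $Y=1$ (vacuously) and of $Y=x^{l}$ divides $ax^{n}$, so Theorem~\ref{Thm:3.4}(ii) forces each of the two solutions to be the (unique) minimal positive solution. Since $1\neq x^{l}$, this is a contradiction, so the equation has no rational solutions $y$.

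\textbf{Main obstacles.} The hypotheses of Theorem~\ref{Thm:3.4} require $2\nmid kl$ and $k>1$. With $a$ odd by assumption, the product $ax^{n}\cdot d$ is odd whenever $x$ is odd, since $ax^{n}\mp 4=du^{2}$ is then odd and forces $d$ odd. The principal obstacle is the case $x$ even, which needs a separate $2$-adic analysis of $ax^{n}\mp 4=du^{2}$ to derive a contradiction. A subsidiary point arises in part (B) when $d=1$: Theorem~\ref{Thm:3.4} does not apply, and one should instead invoke Theorem~\ref{thm:3.9} applied to $X^{2}-ax^{n}Y^{2}=4$, noting that its only exception $(x,y,D)=(123,55,5)$ is ruled out because $55$ is not of the form $x^{l}$ with $x>1$, $l\geq 1$. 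Once these two technicalities are dispatched, the rest of the argument is routine bookkeeping.
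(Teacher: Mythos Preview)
Your approach is exactly the one the paper intends: it states Theorem~\ref{thm:4.24} immediately after the sentence ``Similarly, using Theorems~\ref{Thm:3.3} and~\ref{Thm:3.4} we can obtain the following theorems'' and gives no further details, so the reduction to $ax^{n}X^{2}-dY^{2}=4$ (resp.\ $dX^{2}-ax^{n}Y^{2}=4$) with minimal solution $(1,u)$ (resp.\ $(u,1)$) followed by Theorem~\ref{Thm:3.4} is precisely what is meant.

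The two edge cases you flag are genuine and are glossed over in the paper as well. For $x$ even one proceeds as in the paper's proof of Theorem~4.25 for $c=\pm 2$: pull out the common factor of $4$ from $ax^{n}\mp 4$ and $ax^{n+2l}\mp 4$ (after checking the $2$-adic valuations, using $2\nmid a$) to reduce to an equation $kX^{2}-lY^{2}=1$ and then invoke Theorem~\ref{Thm:3.2} instead of Theorem~\ref{Thm:3.4}. For $d=1$ in part~(B), Theorem~\ref{thm:3.9} does not literally apply since it posits a prime $p\nmid D$ with $p\mid y$; the clean fix is to note that the St\"ormer argument underlying Theorem~\ref{Thm:3.4}(ii) goes through verbatim for $X^{2}-DY^{2}=4$ via Lemma~\ref{lem:2.1} (the obstruction $k>1$ in equation~\eqref{eq:5} is only there to separate it from equation~\eqref{eq:10}, not because the proof fails). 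With those two patches your argument is complete and coincides with the paper's intended proof.
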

More generally, combining Theorems \ref{thm:4.2} and \ref{thm:4.3}, we can obtain the following theorem.
\begin{theorem} The equation
\begin{equation}\label{eq:69}
\frac{ax^{n+2l}+c}{abt^2x^n+c}=by^2, c=\pm 1,\,\pm 2,\,\pm
4,\,(a,c)=1
\end{equation}
 holds for some integers $a,b,x,y,n,t$, and $l$ with $x>1,n>0,t>0,a>0$ if and only if
$$b=t=1\,\,\,or\,\,\,b=|y|=1$$
except for
\begin{eqnarray}
\notag & (a,b,x,y,n,t,l,c)=(1,-1,2,\pm 3,1,1,1,1),(1,-1,2,\pm 3,2,1,1,2),\\
\notag  & (1,-1,2,\pm 3,3,1,,1,4).
\end{eqnarray}
The solutions to equation \eqref{eq:69} are given by Theorems \ref{thm:4.21} - \ref{thm:4.24} when $b=t=1$. The solutions to equation \eqref{eq:69} are given by $x^l=t$ when $b=|y|=1.$
 \end{theorem}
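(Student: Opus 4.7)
My plan is to start by rearranging equation \eqref{eq:69} in the more workable form
\[ ax^n\bigl(x^{2l}-b^2t^2y^2\bigr)=c\bigl(by^2-1\bigr). \]
If $by^2=1$, the left-hand side must vanish; since $b\in\mathbb{Z}$ and $y\in\mathbb{Z}$, this forces $b=1$ and $y=\pm 1$, and then $x^{2l}=t^2$ i.e.\ $t=x^l$, which is the $b=|y|=1$ alternative in the conclusion. So from here on I may assume $by^2\ne 1$, and the task is to show we must either have $b=t=1$ (in which case \eqref{eq:69} collapses to the equations solved in Theorems \ref{thm:4.21}-\ref{thm:4.24}) or we fall into one of the three listed exceptional families.

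For the case $b=1$, I would set $K:=ax^n$ and $L:=at^2x^n+c$. Direct computation gives $Kt^2-L=-c$, so $(X,Y)=(t,1)$ is a positive integer solution of the Pell-type equation $KX^2-LY^2=-c$; and equation \eqref{eq:69} is precisely the statement that $(X,Y)=(x^l,y)$ is a second such solution. The hypothesis $(a,c)=1$ controls $\gcd(K,L)=\gcd(x^n,c)$, and the parity conditions required by Theorems \ref{Thm:3.2}, \ref{Thm:3.3}, \ref{Thm:3.4} are satisfied in the regime used in Theorems \ref{thm:4.21}-\ref{thm:4.24}. Since $Y=1$ is smallest possible, $(t,1)$ is the minimal positive solution; since every prime divisor of $X=x^l$ divides $K$, Remark \ref{rmk:3.1} together with the Störmer-type theorem appropriate to $c\in\{\pm 1,\pm 2,\pm 4\}$ forces $(x^l,y)=\varepsilon$ or $\varepsilon^3$, with the single additional possibility $(k,l,x,y)=(5,1,5,11)$ coming from Theorem \ref{Thm:3.4} (which itself feeds back as a $b=t=1$, $c=-4$ solution listed in Theorem \ref{thm:4.24}). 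The case $\varepsilon$ returns $y=\pm 1$, contradicting $by^2\ne 1$. The case $\varepsilon^3$ triggers the rigid identity $3^s\pm 3=4Kt^2$ (with the sign depending on $c$), and unwinding $K=ax^n$, $X_1=t$ in this identity, a short $3$-adic valuation argument pins $x=3$, $n=1$, $t=1$; this places us in the $b=t=1$ branch, whose solutions are exactly those of Theorems \ref{thm:4.21}-\ref{thm:4.24}.

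For the remaining case $b\ne 1$, I would return to the factorisation
\[ ax^n\bigl(x^l-bty\bigr)\bigl(x^l+bty\bigr)=c\bigl(by^2-1\bigr). \]
When $|b|\ge 2$, the LHS has order at least $|b|^2t^2x^ny^2$ while the RHS is at most $4(|b|y^2+1)$; combined with $ax^n\mid c(by^2-1)$ and $(a,c)=1$, a direct size comparison rules out all solutions. The only genuinely delicate case is $b=-1$: the equation becomes $ax^n(ty-x^l)(ty+x^l)=c(y^2+1)$, and an elementary enumeration of the small factorisations of $c(y^2+1)/(ax^n)$ consistent with $c\in\{\pm 1,\pm 2,\pm 4\}$, $x>1$, and the matching parity of $ty-x^l$ and $ty+x^l$ shows that the only viable splitting is $\{ty-x^l,\,ty+x^l\}=\{1,5\}$, giving $ty=3$, $x^l=2$, and then $ax^n=2c$. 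This produces exactly the three listed exceptional families $(1,-1,2,\pm 3,1,1,1,1)$, $(1,-1,2,\pm 3,2,1,1,2)$, $(1,-1,2,\pm 3,3,1,1,4)$. The main technical obstacle is precisely this $b=-1$ subcase: unlike the $b=1$ case, $(x^l,y)$ and $(t,1)$ now satisfy Pell-type equations with \emph{opposite} right-hand sides ($-c$ versus $+c$), so the Störmer-type theorems do not apply directly, and the argument must be completed by hand via the small factorisation analysis above.
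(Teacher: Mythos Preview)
Your treatment of the case $b=1$ is essentially the paper's argument: set up the Pell-type equation $KX^2-LY^2=-c$ with $(t,1)$ minimal, observe that every prime of $x^l$ divides $K=ax^n$, and invoke the appropriate St\"ormer-type theorem. That part is fine (modulo the routine parity splittings when $\gcd(x,c)>1$, which the paper handles by dividing through by $2$).

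The gap is in your handling of $b\neq 1$. Your size claim ``the LHS has order at least $|b|^2t^2x^ny^2$'' is false: the left-hand side $ax^n(x^l-bty)(x^l+bty)$ is a \emph{difference} of two large quantities, and nothing prevents $x^l-bty$ from equalling $\pm 1$ while $x^l+bty$ grows. All you can extract from the factorisation is a bound like $a t x^n\le C|y|$, which does not close. Likewise, for $b=-1$ your ``elementary enumeration of the small factorisations of $c(y^2+1)/(ax^n)$'' is not finite as stated: $y$ is unbounded, and you have given no mechanism that forces $ty\pm x^l$ to be small.

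What the paper actually uses here is Theorem~\ref{thm:4.2} (and Theorem~\ref{thm:4.3}): for a fixed nonsquare $D$ there is a \emph{unique} coprime factorisation $D=kl$ with $k>1$ for which $kX^2-lY^2=1$ (resp.\ $=2$) is solvable. In each sign of $b$ the paper exhibits two such factorisations of the same $D=ax^n\cdot b(abt^2x^n+c)$, one coming from the solution $(x^l,|y|)$ and one from the obvious minimal solution $(t,1)$ (or $(1,t)$); uniqueness then forces $ax^n=abx^n$, hence $b=1$, or yields an immediate contradiction when $b<0$, with the exceptional tuples arising from the degenerate boundary cases $b(abt^2x^n+c)=1$ or $2$. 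This structural uniqueness is the missing idea; without it your $b\neq 1$ analysis does not go through.
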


\begin{proof} For the sake of completeness in this proof, we will give details for the first four values of $c$, even we have to repeat the method used.

\underline{\bf Case 1:} $c=-1.$ \\
 If $b>0$, then from equation \eqref{eq:69} we have
 \begin{equation}\label{eq:70}
 ax^n(x^l)^2-b(abt^2x^n-1)y^2=1.
 \end{equation}
It follows that $(x^l,|y|)$ is a solution of the Diophantine equation
\begin{equation}\label{eq:71}
 ax^nX^2-b(abt^2x^n-1)Y^2=1.
 \end{equation}
It is easy to see that $(t,1)$ is the minimal positive solution of
the Diophantine equation
\begin{equation}\label{eq:72}
 abx^nX^2-(abt^2x^n-1)Y^2=1.
 \end{equation}
By Theorem \ref{thm:4.2}. we get
$$ax^n=abx^n$$
We deduce that $b=1.$ Using Theorem \ref{Thm:3.2}, we obtain
$$x^l=t,|y|=1$$
or
$$x^l=3^st,\,\,3\nmid t,\,\,3^s+3=4ax^nt^2.$$
It follows that
$$l=s,\,\,x=3t_1,\,\,t=t_1^s,\,\,n=1.$$
Then we have
$$3^{s-1}+1=4at_1^{2s+1}.$$
Thus we obtain $t_1=1$ and $t=1.$\\

If $b<0$, then from equation \eqref{eq:69} we have
 \begin{equation}\label{eq:73}
 ax^n(x^l)^2-b(abt^2x^n-1)y^2=1.
 \end{equation}
It follows that $(x^l,|y|)$ is a solution of the Diophantine equation
\begin{equation}\label{eq:74}
 ax^nX^2-b(abt^2x^n-1)Y^2=1.
 \end{equation}
One can see that $(1,t)$ is the minimal positive solution of
the Diophantine equation
\begin{equation}\label{eq:75}
 (-abt^2x^n+1)X^2-(-abx^n)Y^2=1.
 \end{equation}
By Theorem \ref{thm:4.2} we obtain
$$b(abt^2x^n-1)=-abx^n,$$
which is impossible.\\

\underline{\bf Case 2:} $c=1.$ \\

 If $b>0$, then equation \eqref{eq:69} gives
 \begin{equation}\label{eq:76}
 b(abt^2x^n+1)y^2-ax^n(x^l)^2=1.
 \end{equation}
This implies that $(|y|,x^l)$ is a solution of the Diophantine equation
\begin{equation}\label{eq:77}
 b(abt^2x^n+1)X^2-ax^nY^2=1.
 \end{equation}
Again here $(1,t)$ is the minimal positive solution of the Diophantine equation
\begin{equation}\label{eq:78}
 (abt^2x^n+1)X^2-abx^nY^2=1.
 \end{equation}
By Theorem \ref{thm:4.2}, we get
$$ax^n=abx^n.$$
We deduce that $b=1.$ Again by Theorem \ref{Thm:3.2}, we obtain
$$x^l=t,|y|=1$$
or
$$x^l=3^st,\,\,3\nmid t,\,\,3^s-3=4ax^nt^2.$$
Therefore we have
$$l=s,\,\,x=3t_1,\,\,t=t_1^s,\,\,n=1,$$
and
$$3^{s-1}-1=4at_1^{2s+1}.$$
Thus we get $t_1=1$ and $t=1.$\\

If $b<0$, then we deduce from equation \eqref{eq:69}
 \begin{equation}\label{eq:79}
 b(abt^2x^n+1)y^2-ax^n(x^l)^2=1.
 \end{equation}
So $(|y|,x^l)$ is a solution of the Diophantine equation
\begin{equation}\label{eq:80}
 b(abt^2x^n+1)X^2-ax^nY^2=1.
 \end{equation}
Also $(t,1)$ is the minimal positive solution of the Diophantine equation
\begin{equation}\label{eq:81}
 (-abx^n)X^2-(-abt^2x^n-1)Y^2=1.
 \end{equation}
If $b(abt^2x^n+1)>1$, then we use Theorem \ref{thm:4.2} to obtain
$$b(abt^2x^n+1)=-abx^n.$$
This is impossible. Hence we have $b(abt^2x^n+1)=1$ so
$$b=-1,\,\,a=1,\,\,t=1,\,\,n=1,\,\,x=2.$$
Then we get
$$y^2-2^{2l+1}=1.$$
Thus $(|y|,2^l)$ is a solution of the Diophantine equation
$$X^2-2Y^2=1.$$
Therefore we have $y=\pm 3,l=1$ by Theorem \ref{Thm:3.1}.\\

\underline{\bf Case 3:} $c=-2.$ First, we assume that $x$ is an odd integer. \\

 If $b>0$, from equation \eqref{eq:69} we have
 \begin{equation}\label{eq:82}
 ax^n(x^l)^2-b(abt^2x^n-2)y^2=2.
 \end{equation}
Thus $(x^l,|y|)$ is a solution of the Diophantine equation
\begin{equation}\label{eq:83}
 ax^nX^2-b(abt^2x^n-2)Y^2=2.
 \end{equation}
It is obvious to see that $(t,1)$ is the minimal positive solution of the Diophantine equation
\begin{equation}\label{eq:84}
 abx^nX^2-(abt^2x^n-2)Y^2=2.
 \end{equation}
Using Theorem \ref{thm:4.2}, we have
$$ax^n=abx^n,$$
so $b=1.$ Theorem \ref{Thm:3.3} implies
$$x^l=t,|y|=1$$
or
$$x^l=3^st,\,\,3\nmid t,\,\,3^s+3=2ax^nt^2,$$
then
$$l=s,\,\,x=3t_1,\,\,t=t_1^s,\,\,n=1.$$
So we have
$$3^{s-1}+1=2at_1^{2s+1}.$$
Thus we obtain $t_1=1$ and $t=1.$\\

If $b<0$, we use equation \eqref{eq:69} to have
 \begin{equation}\label{eq:85}
 ax^n(x^l)^2-b(abt^2x^n-2)y^2=2.
 \end{equation}
So $(x^l,|y|)$ is a solution of the Diophantine equation
\begin{equation}\label{eq:86}
 ax^nX^2-b(abt^2x^n-2)Y^2=2.
 \end{equation}
Also $(1,t)$ is the minimal positive solution of the Diophantine equation
\begin{equation}\label{eq:87}
 (-abt^2x^n+2)X^2-(-abx^n)Y^2=2.
 \end{equation}
Theorem \ref{thm:4.2} implies
$$b(abt^2x^n-2)=-abx^n,$$
which is impossible.\\

Second, we assume that $x$ is an even integer. \\

 If $b>0$, from equation \eqref{eq:69} we have
 \begin{equation}\label{eq:88}
 \frac{ax^n}{2}(x^l)^2-b\frac{(abt^2x^n-2)}{2}y^2=1.
 \end{equation}
Therefore $(x^l,|y|)$ is a solution of the Diophantine equation
\begin{equation}\label{eq:89}
 \frac{ax^n}{2}X^2-b\frac{(abt^2x^n-2)}{2}Y^2=1.
 \end{equation}
One can easily see that $(t,1)$ is the minimal positive solution of
the Diophantine equation
\begin{equation}\label{eq:90}
 \frac{abx^n}{2}X^2-\frac{(abt^2x^n-2)}{2}Y^2=1.
 \end{equation}
If $ax^n>2$, then by Theorem \ref{thm:4.2} we get
$$ax^n=abx^n,$$
so $b=1.$ Again Theorem \ref{Thm:3.2} gives
$$x^l=t,|y|=1$$
or
$$x^l=3^st,\,\,3\nmid t,\,\,3^s+3=2ax^nt^2.$$
It follows that
$$l=s,\,\,x=3t_1,\,\,t=t_1^s,\,\,n=1.$$
So we have
$$3^{s-1}+1=2at_1^{2s+1},$$
thus $t_1=1$ and $t=1.$  Then we obtain $x=3$, contradicting the fact that $x$ is even. If $ax^n=2$, then we see that $a=n=1,\,\,x=2$. So we get
\begin{equation}\label{eq:91}
2^{2l}-b(bt^2-1)y^2=1.
\end{equation}
But $2bt^2-1+\sqrt{b(bt^2-1)}$ is the fundamental solution of
the Diophantine equation
$$X^2-b(bt^2-1)Y^2=1.$$
So we see that equation \eqref{eq:91} is impossible.\\

 If $b<0$, we use equation \eqref{eq:69} to have
 \begin{equation}\label{eq:92}
 \frac{ax^n}{2}(x^l)^2-\frac{b(abt^2x^n-2)}{2}y^2=1.
 \end{equation}
We deduce that $(x^l,|y|)$ is a solution of the Diophantine equation
\begin{equation}\label{eq:93}
 \frac{ax^n}{2}X^2-\frac{b(abt^2x^n-2)}{2}Y^2=1.
 \end{equation}
Again $(1,t)$ is the minimal positive solution of the Diophantine equation
\begin{equation}\label{eq:94}
 \frac{(-abt^2x^n+2)}{2}X^2-\frac{(-abx^n)}{2}Y^2=1.
 \end{equation}
If $ax^n>2$, then by Theorem \ref{thm:4.2} we get
$$b(abt^2x^n-2)=-abx^n.$$
This is impossible. The proof of the case $ax^n=2$ with $b>0$ is similar.\\

\underline{\bf Case 4:} $c=2.$ First suppose that $x$ is an odd integer. \\

 If $b>0$, from equation \eqref{eq:69} we have
 \begin{equation}\label{eq:95}
 b(abt^2x^n+2)y^2-ax^n(x^l)^2=2.
 \end{equation}
Therefore $(|y|,x^l)$ is a solution of the Diophantine equation
\begin{equation}\label{eq:96}
 b(abt^2x^n+2)X^2-ax^nY^2=2.
 \end{equation}
Also $(1,t)$ is the minimal positive solution of
the Diophantine equation
\begin{equation}\label{eq:97}
 (abt^2x^n+2)X^2-abx^nY^2=2.
 \end{equation}
By Theorem \ref{thm:4.2} we get
$$ax^n=abx^n,$$
so $b=1.$ Again by Theorem \ref{Thm:3.3} we obtain
$$x^l=t,|y|=1$$
or
$$x^l=3^st,\,\,3\nmid t,\,\,3^s-3=2ax^nt^2.$$
So we deduce
$$l=s,\,\,x=3t_1,\,\,t=t_1^s,\,\,n=1.$$
Then we have
$$3^{s-1}-1=2at_1^{2s+1},$$
thus $t_1=1$, and $t=1.$\\

If $b<0$, then equation \eqref{eq:69} implies
 \begin{equation}\label{eq:98}
 b(abt^2x^n+2)y^2-ax^n(x^l)^2=2
 \end{equation}
Thus $(|y|,x^l)$ is a solution of the Diophantine equation
\begin{equation}\label{eq:99}
 b(abt^2x^n+2)X^2-ax^nY^2=2.
 \end{equation}
Again $(t,1)$ is the minimal positive solution of the Diophantine equation
\begin{equation}\label{eq:100}
 (-abx^n)X^2-(-abt^2x^n-2)Y^2=2.
 \end{equation}
If $b(abt^2x^n+2)>1$, then using Theorem \ref{thm:4.2} we get
$$b(abt^2x^n+2)=-abx^n,$$
which is impossible. Hence we have $b(abt^2x^n+2)=1$ and thus
$$b=-1,\,\,a=1,\,\,t=1,\,\,n=1,\,\,x=3.$$
Then we obtain
$$3^{2l+1}+2=y^2,$$
which implies that $(2|3)=1$, contradicting the fact that $(2|3)=-1.$ \\

Now we assume that $x$ is an even integer. \\

 If $b>0$, from equation \eqref{eq:69} we have
 \begin{equation}\label{eq:101}
 \frac{b(abt^2x^n+2)}{2}y^2-\frac{ax^n}{2}(x^l)^2=1.
 \end{equation}
We deduce that $(|y|,x^l)$ is a solution of the Diophantine equation
\begin{equation}\label{eq:102}
 \frac{b(abt^2x^n+2)}{2}X^2-\frac{ax^n}{2}Y^2=1.
 \end{equation}
We know that  $(1,t)$ is the minimal positive solution of
the Diophantine equation
\begin{equation}\label{eq:103}
 \frac{(abt^2x^n+2)}{2}X^2-\frac{abx^n}{2}Y^2=1.
 \end{equation}
By Theorem \ref{thm:4.2}, we get
$$ax^n=abx^n.$$
This implies that $b=1.$ Using Theorem \ref{Thm:3.2}, we obtain
$$x^l=t,|y|=1$$
or
$$x^l=3^st,\,\,3\nmid t,\,\,3^s-3=2ax^nt^2.$$
It follows that
$$l=s,\,\,x=3t_1,\,\,t=t_1^s,\,\,n=1.$$
So we have
$$3^{s-1}-1=2at_1^{2s+1},$$
hence $t_1=1$ and $t=1.$  Then we obtain $x=3$. This contradicts the fact that $x$ is even. \\

If $b<0$, from equation \eqref{eq:69} we have
 \begin{equation}\label{eq:104}
 \frac{b(abt^2x^n+2)}{2}y^2-\frac{ax^n}{2}(x^l)^2=1.
 \end{equation}
Thus $(|y|,x^l)$ is a solution of the Diophantine equation
\begin{equation}\label{eq:105}
 \frac{b(abt^2x^n+2)}{2}X^2-\frac{ax^n}{2}Y^2=1.
 \end{equation}
Also the minimal positive solution of the Diophantine equation
\begin{equation}\label{eq:106}
\frac{(-abx^n)}{2}X^2- \frac{(-abt^2x^n-2)}{2}Y^2=1
 \end{equation}
is $(1,t)$. If $-abx^n>2$ and $b(abt^2x^n+2)>2$, then using Theorem \ref{thm:4.2} we obtain
$$b(abt^2x^n+2)=-abx^n.$$
This is impossible. If $-abx^n=2$, then $a=n=1,b=-1,x=2.$  Thus we have
$$(t^2-1)y^2-2^{2l}=1,$$
which is impossible by considerations modulo $4$.  If $b(abt^2x^n+2)=2$, then one can see that $b=-1,a=t=1,n=2,x=2$ or $b=-1,a=t=n=1,x=4$.
When $b=-1,a=t=1,n=2,x=2$, then from equation \eqref{eq:104} we get
$$y^2-2^{2l+1}=1.$$
Therefore $(|y|,2^l)$ is a solution of the Diophantine equation
$$X^2-2Y^2=1.$$
So we have $y=\pm 3,l=1$ by Theorem \ref{Thm:3.1}. When
$b=-1,a=t=n=1,x=4$, from equation \eqref{eq:104} we get
$$y^2-2\times 4^{2l}=1.$$
It follows that $(|y|,4^l)$ is a solution of the Diophantine equation
$$X^2-2Y^2=1,$$
which is impossible by Theorem \ref{Thm:3.1}.\\

The proof for the cases $c=-4$ and $c=4$ can done exactly by the same way. This completes the proof of Theorem \ref{thm:4.26}.
\end{proof}

Similarly, we have the following

\begin{theorem} {\rm (\cite{yuan3})}\label{thm:4.26} The equation
\begin{equation}\label{eq:107}
\frac{ax^n+c}{abxt^2+c}=by^2, c=\pm 1,\,\pm 2,\,\pm
4,\,2\nmid n,\,\,(a,c)=1
\end{equation}
 holds for some integers $a,b,x,y,n$ and $t$ with $x>1,n>0,t>0,a>0$ if and only if
$$b=t=1\,\,\,or\,\,\,b=|y|=1$$
except for
$$(a,b,x,y,n,t,c)=(1,-1,2,\pm 3,3,1,1).$$
The solutions to equation \eqref{eq:107} are given by Theorems \ref{thm:4.16}, \ref{thm:4.18}, and \ref{thm:4.19} when $b=t=1$. The solutions to equation \eqref{eq:107} are given by $x^{n-1}=t$
when $b=|y|=1.$
 \end{theorem}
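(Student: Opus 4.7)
The plan is to mimic the strategy of the preceding Theorem (the one with equation~\eqref{eq:69}), replacing the exponent $n+2l$ by the single odd exponent $n$. Since $2\nmid n$, write $l=(n-1)/2\ge 0$, so that $x^n = x\cdot(x^l)^2$. Multiplying out $\frac{ax^n+c}{abxt^2+c}=by^2$ and regrouping gives
\begin{equation}\label{eq:reduction}
ax(x^l)^2 - b(abxt^2+c)\,y^2 = -c,
\end{equation}
so $(X,Y)=(x^l,y)$ is a positive integer solution of the generalized Pell equation $axX^2 - b(abxt^2+c)Y^2 = -c$. Moreover $(t,1)$ is visibly a solution of the \emph{aligned} equation $abxX^2-(abxt^2+c)Y^2=-c$, and a short argument (as in the previous proof, using the description of solutions supplied by Lemma~\ref{lem:2.2}) shows that $(t,1)$ is its minimal positive solution.

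First I would split into six cases according to $c\in\{\pm1,\pm2,\pm4\}$ and, within each, two subcases according to the sign of $b$. In each case I follow the template of the proof of the previous theorem: if $b>0$, compare \eqref{eq:reduction} against the minimal solution $(t,1)$ of the aligned equation and invoke Theorem~\ref{thm:4.2} (uniqueness of the factorization $kl=D$ for which a fixed-$C$ Pell equation is solvable). This forces the identification $ax=abx$, hence $b=1$. If $b<0$, the analogous comparison yields $b(abxt^2+c)=-abx$, which is impossible except in a handful of low-parameter configurations; these must be sifted by hand and will produce the single exception $(a,b,x,y,n,t,c)=(1,-1,2,\pm3,3,1,1)$, exactly as the analogous case $c=1$, $b<0$ produced the exceptions in Theorem~\ref{thm:4.25}. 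For $c=\pm 2,\pm 4$ with $x$ even a preliminary division by $2$ (respectively $4$) is needed, again paralleling the earlier proof.

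Once $b=1$ is forced, equation~\eqref{eq:reduction} becomes a solvable Pell-type equation with minimal positive solution $(t,1)$, so that $(x^l,y)$ arises as some odd power of the fundamental solution. Here I apply the Störmer-type theorem matching the value of $c$: Theorem~\ref{Thm:3.2} for $c=\pm1$, Theorem~\ref{Thm:3.3} for $c=\pm2$, and Theorem~\ref{Thm:3.4} for $c=\pm4$. The output is either $(x^l,|y|)=(t,1)$ (which corresponds to $b=|y|=1$ and $t^2=x^{n-1}$, the family recorded in the statement) or a ``$3$-power'' exception of the form $x^l=3^s t$ with a relation $3^{s-1}\pm1 = 2ax^nt^{\text{something}}$ (the exact shape depending on $c$). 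In the latter case the usual descent $x=3t_1$, $t=t_1^s$ combined with the size inequality in $3^{s-1}\pm 1 = 2at_1^{2s+1}$ forces $t_1=1$ and hence $t=1$; the remaining equation is precisely of the type treated in Theorems~\ref{thm:4.16}, \ref{thm:4.18}, \ref{thm:4.19}, yielding the ``$b=t=1$'' family of solutions listed in the theorem.

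I expect the main obstacle to be the low-parameter degenerate cases in the $b<0$ analysis, where the Pell equation degenerates (for instance when $ax^n\le 4$ or when one of the Pell coefficients collapses to $1$). Theorem~\ref{thm:4.2} no longer applies directly there, and one has to work with the tiny Pell equations $X^2-2Y^2=1$ and the quadratic reciprocity obstruction $(2\mid 3)=-1$ to eliminate the spurious branches. This is precisely where the single exception $(1,-1,2,\pm3,3,1,1)$ emerges, and the careful separation of the $x$ even / $x$ odd subcases for $c=\pm2,\pm4$ is the bookkeeping that has to be carried out rather than the conceptually hard step; everything else is a faithful odd-exponent analogue of the preceding proof.
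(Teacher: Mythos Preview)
Your proposal is correct and is exactly the approach the paper intends: the paper offers no proof of this theorem beyond the word ``Similarly'', pointing back to the detailed case analysis carried out for equation~\eqref{eq:69}, and your outline---writing $n=2l+1$ so that $x^n=x\cdot(x^l)^2$, reducing to the Pell-type equation $ax\,X^2-b(abxt^2+c)Y^2=-c$ with aligned minimal solution $(t,1)$, invoking Theorem~\ref{thm:4.2} to force $b=1$ (or to isolate the few degenerate $b<0$ configurations, whence the lone exception), and then applying the matching St$\ddot{o}$rmer-type result among Theorems~\ref{Thm:3.2}--\ref{Thm:3.4}---is precisely that analogy made explicit. One incidental remark: in the $b=|y|=1$ branch your computation $t^2=x^{n-1}$ (equivalently $t=x^{(n-1)/2}$) is the correct one; the ``$x^{n-1}=t$'' in the statement is evidently a misprint for this.
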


 \end{document}